\DeclarePairedDelimiter{\ceil}{\lceil}{\rceil}
\newcommand{\al}{\alpha}
\newcommand{\be}{\beta}
\newcommand{\mi}{\mathrm{i}}
\newcommand{\p}{\mathrm{p}}
\newcommand{\e}{\varepsilon}
\newcommand{\N}{\mathbb{N}}
\newcommand{\R}{\mathbb{R}}
\newcommand{\I}{\mathcal{I}}
\newcommand{\HH}{\mathcal{H}}
\newcommand{\J}{\mathcal{J}}
\newcommand{\oh}{\mbox{$\frac{1}{2}$}}
\newcommand{\chiq}{ \sideset{}{^*}\sum_{d \, (\mathrm{mod} \, q)}}
\newcommand{\chiqs}{ \sum^*_{d \, (\mathrm{mod} \, q)}}
\DeclareFontFamily{OT1}{rsfs}{}
\DeclareFontShape{OT1}{rsfs}{n}{it}{<-> rsfs10}{}
\DeclareMathAlphabet{\mathscr}{OT1}{rsfs}{n}{it}
\newtheorem{prop}{Proposition}[section]
\newtheorem{thm}[prop]{Theorem}
\newtheorem{lem}[prop]{Lemma}
\newtheorem{defn}{Definition}
\newtheorem*{defn*}{Definition}
\newtheorem{conj}{Conjecture}
\numberwithin{equation}{section}
\renewcommand{\Re}{{\mathfrak{Re}}}
\renewcommand{\Im}{{\mathfrak{Im}}}
\renewcommand{\imath}{i}
\DeclareMathOperator{\GL}{GL}
\begin{document}

\title[]{The eighth moment of the Riemann zeta function}
\author{Nathan Ng}

\address{Department of Mathematics and
Computer Science, University of Lethbridge, Lethbridge, AB Canada T1K 3M4}
\email{nathan.ng@uleth.ca}

\author{Quanli Shen}

\address{Department of Mathematics and
Computer Science, University of Lethbridge, Lethbridge, AB Canada T1K 3M4}
\email{quanli.shen@uleth.ca}

\author{Peng-Jie Wong}

\address{National Center for Theoretical Sciences\\
No. 1, Sec. 4, Roosevelt Rd., Taipei City, Taiwan}
\email{pengjie.wong@ncts.tw}

\subjclass[2000]{Primary 11M06; Secondary 11M41, 11S40}

\dedicatory{In memory of Aleksandar Ivi\'{c}.}

\date{\today}

\keywords{Moments of the Riemann zeta function, additive divisor sums}

\begin{abstract}
In this article, we establish an asymptotic formula for the eighth moment of the Riemann zeta function, assuming the Riemann hypothesis and a quaternary additive divisor conjecture.   This builds on the work of the first author \cite{Ng} on the sixth moment of the Riemann zeta function and the works of Conrey-Gonek \cite{CG} and Ivi\'{c}  \cite{Iv2,Iv3}.
A key input is a sharp bound for a certain shifted moment of the Riemann zeta function, assuming the Riemann hypothesis.  
\end{abstract}

\maketitle


\section{Introduction}
This article concerns the eighth moment $I_4(T)$ of the Riemann zeta function $\zeta(s)$, where 
\begin{equation}
  \label{IkT}
  I_k(T) = \int_{0}^{T} |\zeta( \oh +\mi t)|^{2k} \, dt
\end{equation}
denotes the $2k$-th moment of the Riemann zeta function.  There is a long and extensive history of research on the moments \eqref{IkT}.
Fundamental results may be found in the books \cite{Ti}, \cite{Iv}, \cite{Mo}, and \cite{Iv2}.  Regarding its size, it was proven by Heath-Brown \cite{HB81} and Ramachandra \cite{Ram78,Ram95} 
that under the Riemann hypothesis, for any real $k\ge 0$,
\begin{equation}
  \label{lb}
 I_k(T) \gg T (\log T)^{k^2}.
\end{equation}
Recently, {Radziwi\l\l} and Soundararajan \cite{RaSo} showed that \eqref{lb} holds unconditionally for $k\ge 1$.

Under the Riemann hypothesis, Harper \cite{Ha} established that 
\begin{equation}
   \label{ub}
  I_k(T) \ll T(\log T)^{k^2}.
\end{equation}
This improved  an earlier result of Soundararajan \cite{So} in which the right of \eqref{ub} was larger by a factor of $(\log T)^{\e}$. 
Unconditionally, it is  known that 
\begin{equation*}
  I_k(T) \ll T^{M+\varepsilon}\text{ with }
   M\le\begin{cases}
 1 +  \frac{k-2}{4} & \text{ if $2\le k \le 6$;} \\
 2 +  \frac{3(k-6)}{11} & \text{ if $6\le k \le \frac{178}{26}$;} \\ 
1 +   \frac{35(k-3)}{108} & \text{ if $k \ge \frac{178}{26}$}.
\end{cases}
\end{equation*}
(see \cite[Theorem 8.3]{Iv}).

Keating and Snaith \cite{KS}, using a random matrix model, conjectured that for $k \in \mathbb{N}$, 
\begin{equation}
  \label{ksasymp}
I_k(T) \sim \frac{g_k a_k}{(k^2)!} T (\log T)^{k^2},
\end{equation}
as $T\rightarrow \infty$,
where 
\begin{equation}
  \label{gkak}
  g_k =  k^2! \prod_{j=0}^{k-1} \frac{ j!}{(k+j)!}  
\quad \text{ and } \quad 
  a_k = \prod_{p} \left( 1-\frac{1}{p} \right)^{k^2} \sum_{m=0}^{\infty} \binom{k+m-1}{m}^2 
   p^{-m}.
\end{equation}
In 2005, Conrey \emph{et al.} \cite{CFKRS}, using a heuristic argument with the approximate functional equation, conjectured that for $k \in \mathbb{N}$, 
\begin{equation}
  \label{fullasymptotic}
  I_k(T) =  T \mathcal{P}_{k^2}(\log T)  +o(T),
\end{equation}
where $\mathcal{P}_{k^2}$ is a certain polynomial of degree $k^2$.
In 1918, Hardy and Littlewood \cite{HL}   established the asymptotic \eqref{ksasymp} for the case $k=1$, and in 1926, Ingham \cite{In} established the case $k=2$. 
The asymptotic for the case $k=3$, was first conjectured by Conrey and Ghosh \cite{CGh}. Conrey and Gonek \cite{CG} 
gave a heuristic  argument, based on the conjectural asymptotics for additive divisor sums, which suggests that \eqref{ksasymp} holds for the cases $k=3,4$.
Recently, building on the works of Conrey-Gonek \cite{CG} and Hughes-Young \cite{HY}, the first author \cite{Ng} showed that a certain conjecture for smoothed ternary additive divisor sums implies that \eqref{ksasymp} and \eqref{fullasymptotic} are true in the case $k=3$.   In this article, we shall extend the ideas in \cite{Ng} to the case $k=4$.

A general approach in evaluating \eqref{IkT} is to write $|\zeta( \oh +\mi t)|^{2k} = \zeta(\oh + \mi t)^k  \zeta(\oh - \mi t)^k$.  This leads naturally to the 
consideration of the $k$-th divisor functions $\tau_k$ via the Dirichlet series $\zeta^k(s) = \sum_{n=1}^{\infty} \tau_k(n) n^{-s}$.  (A more elementary definition
is $\tau_k(n) = \# \{ (d_1, \ldots, d_k) \in \mathbb{N}^k \ | \ d_1 \cdots d_k =n \} \text{ for } n \in \mathbb{N}$.)
It turns out that the moments $I_k(T)$, for $k \in \mathbb{N}$,  are intimately related to the additive divisor sums 
\begin{equation}
  \label{Dkxr}
  D_{k}(x;r) = \sum_{n \le x} \tau_k(n) \tau_k(n+r)
\end{equation}
for $ x >0$. 
In \cite{In}, Ingham required an upper bound for $D_2(x;r)$ in order to establish the asymptotic \eqref{ksasymp} for $k=2$.
Heath-Brown \cite{HB} pushed this much further and established \eqref{fullasymptotic} in the case $k=2$, by using a more precise asymptotic 
formula for $D_2(x;r)$.   Deshouilliers and Iwaniec \cite{DI} and then Motohashi \cite{Mo} improved the error term in  \eqref{ksasymp}, for $k=2$,
by making use of the spectral theory of automorphic forms, in particular, Kuznetsov's formula.  For $k >2$,  Conrey and Gonek \cite{CG} and Ivi\'{c} \cite{Iv} studied
the connections between $D_k(x;r)$ and asymptotic and bounds for $I_k(T)$.  
It turns out that it is more convenient to consider more general sums of the shape 
\begin{equation}
  \label{DfIJ}
  D_{f;k,\ell}(r) = \sum_{m-n=r} \tau_k(m) \tau_{\ell}(n) f(m,n),
\end{equation}
where $k, \ell \in \N$, $r \in \mathbb{Z} \setminus \{ 0 \}$, and $f$ is an arbitrary function of two variables.  Observe that the sum in  \eqref{Dkxr} is a special case of \eqref{DfIJ}.
In our work, we shall assume 
$f$ is a smooth function which satisfies conditions \eqref{fsupport} and \eqref{fcond} below.
 Duke, Friedlander, and Iwaniec \cite{DFI} introduced these sums when $k=\ell=2$ (with the more general   summation condition $am-bn=r$)  to study the subconvexity problem for $\GL_2$. An advantage of the sums  $ D_{f;k,\ell}(r)$  is that due to the presence of the smooth function $f$,
 the Poisson summation formula or Voronoi summation formula can readily be applied. The sums \eqref{DfIJ} provide much greater flexibility and are 
 more useful in moment problems than the classical unsmoothed sums \eqref{Dkxr}.

The function $f$ in \eqref{DfIJ} satisfies a number of properties as follows. The partial derivatives of $f$ satisfy growth conditions. 
That is,  there must exist positive $X,Y,$ and $P$ such that 
\begin{equation}
  \label{fsupport}
  \text{support}(f) \subset [X,2X] \times [Y,2Y],
\end{equation}
and the partial derivatives of $f$ satisfy
\begin{equation} \label{fcond} 
 x^{m} y^{n} f^{(m,n)}(x,y) \ll_{m,n}
P^{m+n}. 
\end{equation}  

We now describe a conjectural formula for the sums $D_{f;k,\ell}(r)$.  We first need to introduce an arithmetic function that 
appears in the conjecture. 
\begin{defn}
Let $k \in \N$. 
 The multiplicative function $n \mapsto g_k(s,n)$ is given by
 \begin{equation*}
   g_k(s,n)
=
\prod_{p^{\alpha} \mid \mid n} 
  \frac{\sum_{j=0}^{\infty} \frac{\tau_k(p^{j+\alpha})}{p^{js}}}{ \sum_{j=0}^{\infty} \frac{\tau_k(p^j)}{p^{js}}}.
\end{equation*}
In other words, for $n \in \mathbb{N}$, we have 
$
  \sum_{m=1}^{\infty} \frac{\tau_k(nm) }{m^s}
  =  g_k(s,n) \zeta(s)^k$.
  
The multiplicative function $n \mapsto G_k(s,n)$ is given by 
\begin{equation}
  \label{Gkdefn}
  G_k(s,n) =  \sum_{d \mid n} \frac{\mu(d) d^s}{\phi(d)}    \sum_{e \mid d} \frac{\mu(e)}{e^s} g_k \left(s,\frac{ne}{d} \right).
\end{equation}
\end{defn}
It can be shown that for $s \approx 1$, that $G_k(s,p^j) \approx  \tau_k(p^j) -p^{s-1} \tau_k(p^{j-1})$ (see Lemma \ref{G4formulae} below and \cite[Lemma 5.4, p. 521]{Ng0} for precise statements).   

\begin{conj}[Quaternary additive divisor conjecture] \label{divconj}
There exists $C >0$ for which 
the following  holds.  Let $\varepsilon_0$ and $\varepsilon'$ be arbitrarily small positive constants. 
Let $P > 1$, and let $X,Y > \frac{1}{2}$ satisfy $Y \asymp X$.  Let $f$
be a smooth function satisfying \eqref{fsupport} and \eqref{fcond}.
Then, in those cases where $X$ is sufficiently large (in absolute terms), one has
  \begin{align}  \label{Dfr}
 \begin{split}
      D_{f;4,4}(r)
        & =     \frac{1}{(2\pi \mi)^2} \int_{\mathcal{B}_1} \int_{\mathcal{B}_2}  \zeta^4 (z_1)\zeta^4 (z_2) 
\sum_{q=1}^{\infty}  \frac{c_q (r) G_4 (z_1,q)   G_4 (z_2,q)}{ q^{z_1+z_2}}   \\
&\times  \int_0^{\infty} f(x,x-r) x^{z_1 -1 } (x-r)^{z_2 -1} \, dx  \,dz_2 \,d z_1  
 + O ( P^{C} X^{\frac{1}{2}+\varepsilon_0} ), 
  \end{split}
  \end{align}
 uniformly for $1 \le |r| \ll X^{1-\varepsilon'}$, where for $i=1,2$, $\mathcal{B}_i = \{ z_i \in \mathbb{C} \ | \ |z_i-1| = r_i \} \subset \mathbb{C}$ 
 are circles, centred at $1$, of radii $r_i \in (0, \frac{1}{10})$,  and $c_q(r) =  \chiqs e ( \tfrac{-dr}{q} )$ is the Ramanujan sum. 
\end{conj}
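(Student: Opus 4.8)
The plan is to derive \eqref{Dfr} by the delta method, following Duke--Friedlander--Iwaniec \cite{DFI} for the main-term structure and Conrey--Gonek \cite{CG} for the expected shape of the answer; since no spectral theory is available for $\tau_4$, this is essentially the only available route. I would begin by writing
\[
D_{f;4,4}(r) = \sum_{m,n\ge 1} \tau_4(m)\,\tau_4(n)\, f(m,n)\, \delta(m-n-r),
\]
and expanding the Kronecker delta by the DFI delta-symbol (or Heath-Brown's variant of the circle method), which replaces the condition $m-n=r$ by a sum over moduli $q\ll Q$ with $Q\asymp X^{1/2}$, against complete additive characters $\tfrac{1}{q}\sum^{*}_{a\,(\mathrm{mod}\,q)} e\!\big(\tfrac{a(m-n-r)}{q}\big)$ and a smooth, mildly varying cutoff. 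The crucial feature is that the delta-symbol \emph{separates} the $m$- and $n$-variables: for each $q$ and each $a$ one is left with two independent twisted $\GL_4$ divisor sums, $\sum_m \tau_4(m)\, e(\tfrac{am}{q})\, f_1(m)$ and the conjugate sum in $n$, each against a smooth weight still of the type \eqref{fsupport}--\eqref{fcond} but with $P$ replaced by a fixed power of $P$ times $X^{\varepsilon_0}$ (which is where the $P^{C}X^{1/2+\varepsilon_0}$ shape of the error ultimately comes from).

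Next I would apply the $\GL_4$ Voronoi summation formula to each twisted sum (equivalently, repeated Poisson summation combined with the functional equation of $\zeta(s)^4$). Each twisted sum then splits into a smooth \emph{main term}, arising from the zero dual frequency, plus a sum over nonzero dual frequencies weighted by hyper-Kloosterman sums. The zero-frequency term is, up to a Mellin--Barnes integral over a contour near $\Re(z)=1$, essentially $g_4(z,\cdot)\,\zeta(z)^4$ evaluated at the relevant residue class modulo $q$. Multiplying the two zero-frequency main terms and averaging over $a\ (\mathrm{mod}\ q)$, the factor $e(-\tfrac{ar}{q})$ forces this average to collapse to the Ramanujan sum $c_q(r)$, and performing the same $a$-average on the arithmetic of the Voronoi coefficients converts $g_4$ into $G_4$ by precisely the combinatorial identity \eqref{Gkdefn}. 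Reassembling the $q$-sum and the two Mellin variables, I would expect to land on exactly the displayed main term: $\zeta^4(z_1)\zeta^4(z_2)$ times $\sum_{q\ge 1} c_q(r)\,G_4(z_1,q)\,G_4(z_2,q)\,q^{-z_1-z_2}$ times the Mellin transform $\int_0^\infty f(x,x-r)\, x^{z_1-1}(x-r)^{z_2-1}\,dx$ of $f$ along the line $m-n=r$. The contours $\mathcal{B}_i$ must enclose the order-four pole of $\zeta^4(z_i)$ at $z_i=1$ — this is what yields the expected $X(\log X)^{6}$-size main term — and the radii should be kept small, $<\tfrac{1}{10}$, so that the arithmetic $q$-sum stays absolutely convergent and the $\zeta$-factors remain in a controlled region.

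The genuine obstacle is the \emph{off-diagonal} term: the part in which at least one of the two Voronoi dual frequencies is nonzero. After Voronoi on both sides and after summing $e(-\tfrac{ar}{q})$ over $a\ (\mathrm{mod}\ q)$, this contribution is governed by $q$-sums of products of hyper-Kloosterman sums (the $\GL_4$ Voronoi factors) twisted by $c_q(r)$-type sums and integrated against oscillating Bessel-kernel weights. To reach the claimed error term $O(P^{C}X^{1/2+\varepsilon_0})$ one needs essentially square-root cancellation in these multi-dimensional exponential/divisor sums, \emph{uniformly} for $q\ll X^{1/2}$ and for $1\le|r|\ll X^{1-\varepsilon'}$. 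For $\tau_2$ (and, with far more effort, for $\tau_3$) this is exactly where Kuznetsov's formula and the spectral theory of automorphic forms are used, as in Deshouilliers--Iwaniec \cite{DI} and Motohashi \cite{Mo}; for $\tau_4$ no such spectral input is available, and the pointwise Deligne/Weil bounds for the individual Kloosterman sums are not strong enough once one sums over $q$. This is precisely why the statement must be left as a conjecture: the main-term extraction sketched above is routine, but the off-diagonal estimate would require genuinely new bounds for $\GL_4$-type exponential sums — morally a $\GL_4\times\GL_4$ additive divisor estimate — that lie beyond current technology.
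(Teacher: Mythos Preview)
Your proposal is correct and matches the paper's treatment. Since the statement is a \emph{conjecture}, the paper does not prove it; instead, Appendix~1 gives a heuristic derivation of the main term via exactly the DFI $\delta$-method route you describe: expand $\delta(m-n-r)$ by the delta symbol, separate variables, take Mellin transforms, replace each twisted divisor series $\mathcal{D}_4(z,a/q)$ by its polar part $q^{-z}\zeta^4(z)G_4(z,q)$ (your Voronoi ``zero-frequency'' contribution), and observe that the $a$-average produces $c_q(r)$. Your additional discussion of the off-diagonal hyper-Kloosterman sums and the absence of a $\GL_4$ spectral substitute correctly identifies why the error term $O(P^C X^{1/2+\varepsilon_0})$ remains conjectural, which the paper does not spell out in the appendix but references via \cite{Ar}, \cite{Bl}, and the $k=\ell=2$ case.
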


This form of the additive divisor conjecture is worked out in Appendix  1 (Section \ref{ADC}) below.  The derivation is based on Duke, Friedlander, and Iwaniec's $\delta$-method which 
is a form of the circle method.  The Ramanujan sums $c_q(r)$ arise as one is detecting the additive condition $m=n+r$.   The arithmetic functions $G_k(z,n)$ also arise
in this derivation.  For full details, see Appendix 1 and \cite[pp. 589-591]{CG}.   The error term is expected based on Aryan's work \cite{Ar} in the case $k=\ell=2$,
Blomer's work \cite{Bl} on shifted convolutions of modular forms, and a conjecture for $D_{k,\ell}(x,r)$ (see Conjecture \ref{kldivconj} in Appendix 1). 
Oddly, there are  currently no numerical confirmations of this conjecture.

Our main theorem is the following asymptotic formula for the eighth moment of the Riemann zeta function, assuming the Riemann hypothesis and the quaternary additive divisor conjecture.

\begin{thm} \label{8thmoment}
Assume the Riemann hypothesis and Conjecture \ref{divconj} (the quaternary additive divisor conjecture) are true. Then we have
\begin{equation}
  \label{I4Tasymptotic}
  I_4(T) \sim \frac{24024 a_4}{16!}T(\log T)^{16}, 
\end{equation}
as $T \to \infty$, where $a_4$ is defined in \eqref{gkak}.
\end{thm}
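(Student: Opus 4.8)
The plan is to follow the template laid out in \cite{CG} and \cite{Ng}, where the sixth moment was treated, and push the same machinery through for $k=4$. The starting point is an approximate functional equation (or a mollifier-free Motohashi-type identity) that expresses $|\zeta(\tf+\mi t)|^8$ in terms of a Dirichlet polynomial $\sum_{m,n} \tau_4(m)\tau_4(n)(mn)^{-1/2}(m/n)^{-\mi t}$ weighted by a smooth cutoff supported up to roughly $t^4/(2\pi)^4$. Integrating over $t\in[0,T]$ (with a smooth weight) separates the diagonal $m=n$ from the off-diagonal. The diagonal contributes $\sum_{n} \tau_4(n)^2 n^{-1} \times (\text{length})$ which, after the standard Perron/contour argument using $\sum \tau_4(n)^2 n^{-s} = \zeta(s)^{16}\prod_p(\cdots)$, produces the main term of size $c\,T(\log T)^{16}$; matching the constant against the conjectured leading coefficient in \eqref{ksasymp}–\eqref{fullasymptotic} gives the factor $24024\,a_4/16!$, since $g_4 = 24024$.

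The off-diagonal terms $m-n = r \neq 0$ are exactly the quaternary additive divisor sums $D_{f;4,4}(r)$ with $f(x,y)$ the smooth weight coming from the approximate functional equation and the $t$-integration (a function of $x/y$ and of $xy$ against the length). Here one invokes Conjecture~\ref{divconj}: the main term of \eqref{Dfr}, summed over $r$ and integrated, is expected either to be lower order or to combine with the diagonal into the full polynomial $\mathcal P_{16}(\log T)$; for the \emph{leading}-order asymptotic \eqref{I4Tasymptotic} it suffices to show the off-diagonal main term is $O(T(\log T)^{16-\delta})$ or that its leading contribution has strictly smaller order, which follows from bounding the $z_1,z_2$-contour integrals on the small circles $\mathcal B_i$ together with the arithmetic sum $\sum_q c_q(r) G_4(z_1,q)G_4(z_2,q) q^{-z_1-z_2}$, using the asymptotics $G_4(s,p^j)\approx \tau_4(p^j) - p^{s-1}\tau_4(p^{j-1})$ from Lemma~\ref{G4formulae}. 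The error term $O(P^C X^{1/2+\e_0})$ from \eqref{Dfr}, summed over $1\le |r|\ll X^{1-\e'}$ and over the dyadic ranges of $X\asymp t^4$, must be shown to be $o(T(\log T)^{16})$; since $P$ is essentially a power of $\log T$ here (the smooth weight is not too oscillatory) and $X^{1/2}\cdot X \asymp X^{3/2}\ll t^6$, one gains a power of $T$ over the main term, so this is comfortable.

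The genuinely delicate input—flagged in the abstract as ``a sharp bound for a certain shifted moment of the Riemann zeta function''—is controlling the interchange of summation over $r$ with the contour integrals and, in particular, bounding a shifted fourth-moment-type average $\int_0^T |\zeta(\tf+\mi t+\mi\alpha)|^4 |\zeta(\tf+\mi t+\mi\beta)|^4\,dt$ or the relevant mixed object that arises when one tries to estimate the total off-diagonal contribution uniformly. Under RH, Harper-type bounds \eqref{ub} give $\ll T(\log T)^{16}$ for the unshifted eighth moment, but one needs a version with shifts that is \emph{sharp} (no spurious $(\log T)^\e$) and uniform in the shifts over a range like $|\alpha|,|\beta|\ll 1$; establishing this shifted-moment bound via the Soundararajan–Harper method (bounding $\log|\zeta|$ by a short Dirichlet polynomial over the zeros, then a moment-of-moments / ballot-sequence argument) is where the real work lies. \textbf{The main obstacle} is precisely this: obtaining the sharp shifted moment bound and using it to show the assembled off-diagonal sum converges and is of strictly lower order than $T(\log T)^{16}$, so that the diagonal term alone dictates the asymptotic \eqref{I4Tasymptotic}. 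Everything else is a careful but routine bookkeeping of contour shifts, Perron-type evaluations, and dyadic summation over the parameter $X$.
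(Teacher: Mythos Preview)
Your proposal has two substantive gaps, and they are linked.

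First, the claim that ``it suffices to show the off-diagonal main term is $O(T(\log T)^{16-\delta})$'' is simply false. The off-diagonal main term in fact contributes at the leading order $T(\log T)^{16}$ and must be evaluated exactly, not bounded. In the paper the diagonal gives $\frac{4a_4}{638512875}(\log U)^{16}\int\omega$ while the off-diagonal gives $-\frac{13381 a_4}{2615348736000}(\log T)^{16}\int\omega$; the two combine (with substantial cancellation) to produce $\frac{24024 a_4}{16!}(\log T)^{16}\int\omega$. So the off-diagonal contribution is roughly five times the final answer in magnitude. Your hedge ``either \ldots\ or to combine with the diagonal into the full polynomial'' was the correct branch; you then committed to the wrong one.

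Second, you misplace the role of the shifted moment bound and, as a consequence, your error-term accounting for Conjecture~\ref{divconj} does not work. The approximate functional equation has length $mn\ll t^4$, so (since $m\asymp n$ in the off-diagonal) the single-variable scale is $X\asymp T^2$, not $T^4$; your line ``$X^{1/2}\cdot X\asymp X^{3/2}\ll t^6$'' is not the right computation. With $X\asymp T^2$ the square-root cancellation in \eqref{Dfr} gives $X^{1/2+\e_0}\asymp T^{1+2\e_0}$, and after summing over $r$ and the dyadic $X$-ranges the error just barely fails by a factor $T^{\e}$. The paper's fix is to first \emph{truncate} the Dirichlet polynomial to length $U^4$ with $U=T^{1-\e}$ (Proposition~\ref{replacing-I}), so that $X\ll T^{2-\e}$ and $X^{1/2}\ll T^{1-\e/2}$ genuinely saves a power. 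That truncation costs $O(\e T(\log T)^{16})$, and bounding this cost is \emph{exactly} where the sharp shifted moment bound (Theorem~\ref{refined_Harper}) enters --- not, as you suggest, in controlling the off-diagonal sum over $r$. Without this truncation step your argument would be stuck with an error of size $T^{1+\e}$ from the additive divisor conjecture.
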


Our proof of Theorem \ref{8thmoment} relies in a fundamental way on the following result on shifted moments of the Riemann zeta function established by the authors in the companion article \cite{NSW}.
\begin{thm}\label{refined_Harper}
Let $k\ge \frac{1}{2}$ and assume the Riemann hypothesis. 
We define for $T > 1$ and $t_0 \in \R$, 
\begin{equation*}
\mathcal{G}(T,t_0) =\begin{cases}
 \min \left\{\frac{1}{|2t_0|}, \log T \right\}  & \text{ if $|t_0|\le \frac{1}{200}$;} \\
\log(2+ |2t_0|) & \text{ if $|t_0|>\frac{1}{200}$}. 
\end{cases}
\end{equation*}
Then for $T$ sufficiently large and $|t_0|\le 0.5T$, we have
\begin{equation}
  \label{shiftedmoment}
\int_0^{T} |\zeta(\tfrac{1}{2}+\mi t+\mi t_0)|^{k}|\zeta(\tfrac{1}{2}+\mi t-\mi t_0)|^{k} \,dt \ll_k T(\log T)^{\frac{k^2}{2}}\mathcal{G}(T,t_0)^{\frac{k^2}{2}}. \footnote{Note $\mathcal{G}(T,t_0) =  \mathcal{F}(T,t_0,-t_0)$ where  $\mathcal{F}(T,\alpha_1,\alpha_2)$ is defined in \cite{NSW}.}
\end{equation}
\end{thm}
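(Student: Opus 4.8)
The plan is to prove Theorem~\ref{refined_Harper} by running the conditional upper‑bound method of Soundararajan~\cite{So} and Harper~\cite{Ha} on the two‑point shifted integrand while carrying the dependence on $t_0$ through every estimate; this is what is done in the companion paper~\cite{NSW}. First I would reduce matters to an exponential moment of a prime Dirichlet polynomial. Under the Riemann hypothesis there is the pointwise bound, valid for every $\tau\in\R$ and $2\le x\le\tau^{2}$,
\[
\log\bigl|\zeta(\tfrac12+\mi\tau)\bigr|\ \le\ \Re\sum_{2\le n\le x}\frac{\Lambda(n)\,w_x(n)}{n^{\sigma_0+\mi\tau}\log n}\ +\ \frac{(1+o(1))\log|\tau|}{2\log x}\ +\ O\!\left(\tfrac{1}{\log x}\right),
\]
with $\sigma_0=\tfrac12+\tfrac{1}{\log x}$ and $0\le w_x(n)=\log(x/n)/\log x\le1$. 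Fix $x=T^{c}$ for a small constant $c>0$. Discarding the set of $t$ with $\min\{|t+t_0|,\,|t-t_0|\}\le\sqrt{x}$ — of measure $O(T^{c/2})$, and negligible once the surviving zeta factor is bounded by Harper's theorem — and applying the inequality with $\tau=t\pm t_0$ (so $|\tau|\le\tfrac32 T$ and $x\le\tau^{2}$), the integrand in~\eqref{shiftedmoment} is at most
\[
\exp\!\left(\Re\sum_{p\le x}\frac{a_p\,w_x(p)}{p^{\sigma_0+\mi t}}\;+\;O_k(1)\;+\;R(t)\right),\qquad a_p=k\bigl(p^{\mi t_0}+p^{-\mi t_0}\bigr)=2k\cos(t_0\log p),
\]
where the $O_k(1)$ absorbs $k\log T/\log x=O_k(1)$ and $R(t)$ collects the prime‑power ($m\ge2$) terms; since those terms form a Dirichlet polynomial of variance $O(1)$, after removing one more set of negligible measure they contribute only a multiplicative constant $O_k(1)$.

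Next I would appeal to Harper's hierarchical estimate for such exponential moments. A single truncation cannot simultaneously be long enough ($\log x\gg\log T/\log\log T$) to render $k\log T/\log x$ harmless and short enough for the high moments of $\sum_{p\le x}a_p w_x(p)p^{-\mi t}$ over $[0,T]$ to be computed diagonally; Harper resolves this by splitting the primes into blocks $I_j=(x_{j-1},x_j]$, $0\le j\le J\asymp\log\log\log T$, controlling the $j$‑th block sum by its $2\ell_j$‑th moment with $x_j^{\ell_j}\le T$ (diagonal, hence $\ll T\,\ell_j!\,\bigl(\sum_{p\in I_j}|a_p|^2w_x(p)^2/p\bigr)^{\ell_j}$) and, on the rare sets where some partial sum up to $x_j$ is too large, re‑applying the pointwise bound with the shorter truncation $x_{j-1}$, whose error $k\log T/\log x_{j-1}$ is then affordable. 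Carrying this through exactly as in~\cite{Ha} yields
\[
\int_0^T\exp\!\left(\Re\sum_{p\le x}\frac{a_p\,w_x(p)}{p^{\sigma_0+\mi t}}\right)dt\ \ll\ T\exp\!\left(\tfrac12 V\right),\qquad V:=\frac12\sum_{p\le x}\frac{|a_p|^2 w_x(p)^2}{p},
\]
so that, with the first paragraph, $\int_0^T|\zeta(\tfrac12+\mi t+\mi t_0)|^{k}|\zeta(\tfrac12+\mi t-\mi t_0)|^{k}\,dt\ll_k T\exp(\tfrac12 V)$. The only new point here is that every moment bound must be uniform in $t_0$, which is immediate because $|a_p|\le 2k$.

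Then comes the variance computation, where $\mathcal{G}(T,t_0)$ is produced. Using $|a_p|^2=4k^2\cos^2(t_0\log p)=2k^2\bigl(1+\cos(2t_0\log p)\bigr)$,
\[
V\ =\ k^2\sum_{p\le x}\frac{w_x(p)^2}{p}\ +\ k^2\,\Re\sum_{p\le x}\frac{p^{2\mi t_0}w_x(p)^2}{p}.
\]
The first sum is $\log\log T+O(1)$. For the second, partial summation against the prime number theorem — equivalently, comparison with $\log\zeta(1+2\mi t_0)$ after splitting the range at $\log p\asymp1/|t_0|$ — shows it is $\le\log\mathcal{G}(T,t_0)+O(1)$: for $|t_0|\le\tfrac{1}{200}$ the primes with $\log p\le\min\{1/|2t_0|,\log x\}$ contribute $\log\min\{1/|2t_0|,\log x\}+O(1)=\log\mathcal{G}(T,t_0)+O(1)$ and the remaining primes cancel down to $O(1)$, while for $|t_0|>\tfrac{1}{200}$ the whole sum is $O(1)\le\log(2+|2t_0|)+O(1)$. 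Hence $V\le k^2\log\!\bigl((\log T)\,\mathcal{G}(T,t_0)\bigr)+O_k(1)$, and therefore $\int_0^T|\zeta(\tfrac12+\mi t+\mi t_0)|^{k}|\zeta(\tfrac12+\mi t-\mi t_0)|^{k}\,dt\ll_k T(\log T)^{k^2/2}\mathcal{G}(T,t_0)^{k^2/2}$, which is~\eqref{shiftedmoment}.

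The hardest part is the second step: engineering the scales $x_j$ and the truncation levels so that every exceptional‑set contribution is summable and dominated by $T\exp(\tfrac12 V)$, exactly as in Harper's unshifted argument, now with the extra demand of uniformity in $t_0$ up to $0.5T$. One must in particular verify that the oscillatory coefficients $p^{\pm\mi t_0}$ never spoil the diagonal approximation in the moment bounds (they do not: the resonance sums $\sum_{p,q\in I_j}a_pa_q(pq)^{\mi t}$ are still controlled by the length restriction $x_j^{\ell_j}\le T$), and that for $|t_0|$ of order $T$ the estimate $|\tau|\le\tfrac32 T$ in Soundararajan's inequality together with the removal of small heights remains adequate. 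A second, more delicate, point is the variance cancellation in the third step: pinning down the precise form of $\mathcal{G}(T,t_0)$ — in particular the cut‑offs $\min\{1/|2t_0|,\log T\}$ and the role of the threshold $\tfrac{1}{200}$ — requires an honest treatment of $\Re\sum_{p\le x}p^{2\mi t_0}/p$ (equivalently of $\log\zeta(1+2\mi t_0)$) uniformly across the two regimes of $t_0$.
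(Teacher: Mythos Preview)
The paper does not actually prove Theorem~\ref{refined_Harper}: it states the result and explicitly defers the proof to the companion article~\cite{NSW}, noting only that the argument is ``based on important ideas of Harper~\cite{Ha}'' and that ``Harper's method still works with the shifting parameter $t_0$ allowed to be a function of $T$.'' Your proposal follows precisely this route --- Soundararajan's pointwise inequality, Harper's hierarchical moment machinery applied to the Dirichlet polynomial with coefficients $a_p=2k\cos(t_0\log p)$, and the variance computation that produces $\mathcal{G}(T,t_0)$ --- and is a faithful outline of what~\cite{NSW} does. The identification of the two genuinely delicate points (uniformity of Harper's exceptional-set bookkeeping in $t_0$, and the uniform estimation of $\Re\sum_{p\le x}p^{2\mi t_0}/p$ across both regimes of $t_0$) is accurate.
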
 
There are a number of key differences in establishing Theorem \ref{8thmoment} with the argument in \cite{Ng}, which deals with the sixth moment.  \\
\noindent {\bf Remarks.} 
\begin{enumerate}
\item  In \cite{Iv3} Ivi\'{c}  showed that an averaged form of the quaternary additive divisor conjecture implies that $I_4(T) \ll T^{1+\e}$ for any $\e >0$
(See Theorem 1, Corollary 1, and Corollary 2 of \cite{Iv3}).
Conrey and Gonek \cite{CG} have given a heuristic argument which shows that the unsmoothed quaternary additive divisor conjecture 
implies
$I_4(T) \sim  \frac{24024 a_4}{16!}T(\log T)^{16}$.
The work in \cite{CG} is heuristic since in many steps error terms are not bounded. 
There is no smoothing in any of the arguments in \cite{CG} including in the approximate functional equation and in the interval of integration of $I_4(T)$. 
In practice it is usually harder to obtain  good error terms without the  use of smooth weights.

\item In this argument, we only ``win by an $\e$" and no lower order terms in the main term are obtained.  In contrast, in \cite{Ng}, the full main term 
with a power savings error term is obtained for $I_3(T)$ assuming the ternary additive divisor conjecture.
\item Harper's result \eqref{ub} shows that the Riemann hypothesis implies $I_4(T) \ll T (\log T)^{16}$.  
Our result shows that the additional assumption of the quaternary additive divisor conjecture upgrades this to an asymptotic 
formula. 
 It seems likely that the methods of this article would allow one to prove that the quaternary additive divisor conjecture implies 
\begin{equation*}
    I_4(T) \ge \frac{24024 a_4}{16!}T(\log T)^{16} (1+o(1)). 
\end{equation*}
That is, if we do not assume the Riemann hypothesis, then we can obtain a lower bound of the correct order of magnitude.
The proof of such a result would follow ideas from \cite[Theorem 1.1]{SY} and also \cite[Theorem 1.2]{Sh}, where sharp lower bounds for the second moment of quadratic twists of a cusp form and for the fourth moment of quadratic Dirichlet $L$-functions are obtained unconditionally. 
\item  Shifted moments of the Riemann zeta function were studied by Chandee, and she formulated a conjecture on them in \cite[Conjecture 1]{Ch}.
In an accompanying article \cite{NSW}, assuming the Riemann hypothesis, we prove Chandee's conjecture in the case of two shifts. 
This is encapsulated in Theorem \ref{refined_Harper} above.  Our theorem is based on important ideas of Harper \cite{Ha} and includes the main theorem in \cite{Ha} as a special case. 
Crucially, we show that Harper's method still works with the shifting parameter $t_0$ allowed to be a function of $T$.  Incidentally, we note that Theorem \ref{refined_Harper}
includes Harper's theorem (the bound \eqref{ub}) as a special case by taking $t_0=0$. 

\item  The approximate functional equation for $|\zeta(\tfrac{1}{2}+\mi t)|^8$ expresses this in terms of a product of  two Dirichlet polynomials  each of length essentially
$T^{2}$.  
A technical lemma  is proven that reduces  the length of this Dirichlet polynomial by a factor of $T^{\e}$ for an arbitrarily small $\e$.
This will allow us to deal with a polynomial of length $T^{2-\e}$ and  the key point is that square root cancellation in additive divisor sums gives an error of size $O((T^{2-\e})^{\frac{1}{2}})=O(T^{1-0.5 \e})$ which is an acceptable error term.
In order to make this work, we need a bound on  shifted moments of the Riemann zeta function of the type in \eqref{shiftedmoment}.  
(The upper bound method of  Soundararajan \cite{So} does not allow us to obtain the asymptotic \eqref{I4Tasymptotic}.)
This idea of reducing the length of a Dirichlet polynomial
was used first in \cite{SY} and later in the articles of \cite{CL} and \cite{Sh}. Though in those articles the length of the Dirichlet polynomial is only reduced
by a factor of $(\log T)^{A}$ for an arbitrarily large $A$ whereas we reduce the length by a factor of $T^{\varepsilon}$ for an arbitrarily small $\e$. 

\item The other main difference with \cite{Ng} is that we deal directly with the eighth moment instead of the shifted eighth moment.  
The reason we did this is that we  encountered a technical difficulty in  trying to generalize the argument of \cite{Ng} to the case $|\I|=|\J|=4$. 
Here $\I =\{ a_1, \ldots, a_k \}$ and $\J = \{b_1, \ldots, b_k \}$ are the shifting parameters (see just below equation (1.10) in \cite{Ng}). We hope to revisit this in the future. 
The approach using shifted moments can be advantageous since the main term arises from many polar terms which arise from the 
simple poles of the Dirichlet series under consideration.  Fortunately, these polar terms can be shown to cancel by Lemma 2.5.1 of \cite{CFKRS}. 
When no shifts are introduced, the main term arises from multiple poles of the Dirichlet series under consideration.  In this case, multiple residues 
need to be computed.

\end{enumerate}


\noindent {\bf Conventions and notation.} 
Given two functions $f(x)$ and $g(x)$, we shall interchangeably use the notation  $f(x)=O(g(x))$, $f(x) \ll g(x)$, and $g(x) \gg f(x)$  to mean there exists $M >0$ such that $|f(x)| \le M |g(x)|$ for all sufficiently large $x$. 
We write $f(x) \asymp g(x)$ to mean that the estimates $f(x) \ll g(x)$ and $g(x) \ll f(x)$ simultaneously hold.  
If we write $f(x)=O_{a_1, \ldots, a_\ell}(g(x))$, $f(x) \ll_{a_1, \ldots, a_\ell} g(x)$, or $f(x) \asymp_{a_1, \ldots, a_\ell} g(x)$ for real numbers $a_1, \ldots, a_\ell$, then we mean that the 
corresponding implied constants depend on $a_1, \ldots, a_\ell$.  
In addition, $B$ shall denote a positive constant, which may be taken arbitrarily large and which may change from line to line. 
The letter $p$ will always be used to denote a prime number.
For a function $\varphi : \mathbb{R}^{+} \times \mathbb{R}^{+} \to \mathbb{C}$, $\varphi^{(m,n)}(x,y) = \frac{\partial^m}{\partial x^m}
\frac{\partial^n}{\partial y^n} \varphi(x,y)$.  
The integral notation $\int_{(c)} f(s) \,ds$ for a complex function $f(s)$ and $c \in \mathbb{R}$
 will be used frequently and is defined by  the following contour integral
\begin{equation*}
   \label{intc}
  \int_{(c)}f(s) \,ds = \int_{c-\mi\infty}^{c + \mi \infty} f(s) \, ds. 
\end{equation*}
In this article, we shall consider $s \in \mathbb{C}$ and  usually write its real part as $\sigma =\Re(s)$. 
Throughout this article, we will often use the fact that $\omega(t)$ has support in
$[c_1 T, c_2T]$ so that $t \asymp T$.   Throughout this article we use the notation $\mi$ to denote the imaginary number which satisfies $\mi^2 =-1$
and $i$ shall be used as an integer variable.

\section{Preliminary lemmas}

 The following proposition is a straightforward generalization of \cite[Proposition 2.1, p. 209]{HY} 
 and \cite{Ng}, which 
 gives approximate functional equations for  $|\zeta(\oh+ \mi t)|^k$ 
 with $k=4,6$
 (see also \cite[Lemma 1]{HB}).

\begin{prop} \label{afe}
Let $G(s)$ be an entire function satisfying 
\begin{align}
|G(s)| \ll \exp(-c |t|)  
\label{bdG}
\end{align}
for some (explicit) $c$ and $\Re(s)$ in an appropriate interval.\footnote{For instance, one may choose $G(s)= \exp(s^2)$.}  We define
\begin{equation*}
   V_{t}(x) = \frac{1}{2 \pi \mi} \int_{(1)} \frac{G(s)}{s} g(s,t) x^{-s} \,ds,
\end{equation*}
where 
\begin{equation*}
  g(s,t) = 
  \left( 
  \frac{  \Gamma \left( \frac{1}{2} (\tfrac{1}{2}+s+ \mi t) \right) }{ \Gamma  \left( \frac{1}{2} (\tfrac{1}{2}+ \mi t) \right)}
     \frac{   \Gamma \left(\frac{1}{2}( \tfrac{1}{2}+s- \mi t) \right)}{\Gamma \left(  \frac{1}{2} (\tfrac{1}{2}- \mi t) \right)}
  \right)^4.
\end{equation*}
Then for any constant $B>0$, we have 
\begin{align}\label{smoothafe}
  |\zeta(\oh+ \mi t)|^8  
  &=2 \sum_{m,n=1}^{\infty} \frac{\tau_4(m) \tau_4(n)}{(mn)^{\frac{1}{2}}}
\left( \frac{m}{n} \right)^{- \mi t}
V_{t}( \pi^4 mn)  
 + O( ( 1+|t|)^{-B} ).
 \end{align}
\end{prop}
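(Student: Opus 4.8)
The plan is to derive the approximate functional equation for $|\zeta(\tfrac12+\mi t)|^8 = \zeta(\tfrac12+\mi t)^4 \zeta(\tfrac12-\mi t)^4$ by the standard contour-shifting argument, exactly as in \cite[Proposition 2.1]{HY} but with the fourth power of the ratio of Gamma factors. First I would start from the integral
\[
I := \frac{1}{2\pi\mi}\int_{(1)} \frac{G(s)}{s}\, g(s,t)\, \zeta(\tfrac12+s+\mi t)^4 \,\zeta(\tfrac12+s-\mi t)^4 \,\pi^{-4s} \,ds,
\]
where the factor $\pi^{-4s}$ (together with $g(s,t)$) is chosen so that the completed product $\zeta(\tfrac12+s+\mi t)^4\zeta(\tfrac12+s-\mi t)^4 \pi^{-4s} g(s,t)$, viewed via the functional equation $\zeta(w) = \chi(w)\zeta(1-w)$ with $\chi(w) = \pi^{w-1/2}\Gamma(\tfrac12(1-w))/\Gamma(\tfrac{w}{2})$, is invariant (up to an elementary factor) under $s \mapsto -s$. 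Expanding $\zeta^4(w) = \sum \tau_4(m) m^{-w}$ in the region of absolute convergence and integrating term by term gives $I = \sum_{m,n} \tau_4(m)\tau_4(n)(mn)^{-1/2}(m/n)^{-\mi t} V_t(\pi^4 mn)$, which is (half of) the right-hand side of \eqref{smoothafe}.

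Next I would move the contour from $\Re(s)=1$ to $\Re(s)=-1$. The integrand has a pole only at $s=0$, coming from the factor $1/s$ (note $G$ is entire, $g(s,t)$ is holomorphic and nonzero near $s=0$ with $g(0,t)=1$, and the two zeta factors are holomorphic at $s=0$ since $t \neq 0$ moves the potential poles at $s = \tfrac12 \mp \mi t$ away, and in any case $G$ decays); the residue there is $\zeta(\tfrac12+\mi t)^4\zeta(\tfrac12-\mi t)^4 = |\zeta(\tfrac12+\mi t)|^8$. So $I = |\zeta(\tfrac12+\mi t)|^8 + I'$, where $I'$ is the integral on $\Re(s)=-1$. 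In $I'$ I would apply the functional equation to each of the four pairs of zeta factors; the key computation is that $\pi^{-4s} g(s,t) \prod \chi(\tfrac12+s\pm\mi t) = \pi^{4s} g(-s,t)^{-1}\cdots$, but done carefully the standard bookkeeping (as in \cite{HY}) shows that $I'$, after the substitution $s \mapsto -s$, equals exactly $-I$ plus the error, so that $I = |\zeta|^8 + (-I) + \text{error}$... more precisely the symmetry is arranged so that $I' = I - |\zeta(\tfrac12+\mi t)|^8$ is reproduced, giving $2I = |\zeta(\tfrac12+\mi t)|^8$; rearranged this yields \eqref{smoothafe} with the factor $2$ on the sum.

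The error term $O((1+|t|)^{-B})$ comes from bounding the shifted contour integral: on $\Re(s)=-1$ one uses the convexity bound (or just continuity and polynomial growth) for $\zeta$ on vertical lines together with Stirling's formula to estimate $g(s,t)$, whose size is governed by ratios of Gamma functions and grows at most polynomially in $(1+|t|)$ along the contour, while $|G(s)| \ll \exp(-c|s|^2)$ (or $\exp(-c|\Im s|)$) forces rapid decay; choosing the contour or exploiting the decay of $G$ relative to any fixed power of $t$ gives the claimed bound for arbitrary $B$. The main obstacle — and the only place any real care is needed — is verifying the functional-equation symmetry with the fourth power: one must check that the archimedean factor $g(s,t)$ has been defined precisely so that $g(s,t)^{-1}$ times the four chi-factors collapses to $g(-s,t)$ times an elementary unimodular factor $(\text{something})^{-\mi t}$-free piece, so that after $s \mapsto -s$ the diagonal variable $(m/n)^{-\mi t}$ and the argument $\pi^4 mn$ reappear symmetrically; this is routine Gamma-function algebra but must be done with the correct normalization, and it is exactly the point where the present proposition differs from the $k=4,6$ cases of \cite{HY} and \cite{Ng}.
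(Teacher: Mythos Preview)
The paper does not actually prove this proposition; it simply cites \cite[Proposition~2.1]{HY} and \cite{Ng} and calls the result a ``straightforward generalization.'' Your contour-shifting approach is exactly that standard argument, and the overall structure is correct: with $X(s,t):=\pi^{-4s}g(s,t)\zeta(\tfrac12+s+\mi t)^4\zeta(\tfrac12+s-\mi t)^4$ one has $X(s,t)=X(-s,t)$ by the functional equation of $\Lambda(w)=\pi^{-w/2}\Gamma(w/2)\zeta(w)$, and hence the integral on $\Re(s)=-1$ equals $-I$ after $s\mapsto -s$, yielding $2I=|\zeta(\tfrac12+\mi t)|^8+O((1+|t|)^{-B})$.

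There is, however, a genuine slip in your identification of the error term. The $O((1+|t|)^{-B})$ does \emph{not} come from ``bounding the shifted contour integral'' on $\Re(s)=-1$: that integral is \emph{exactly} $-I$ by the symmetry, not an error. The error comes instead from the residues you brushed aside parenthetically. When you move from $\Re(s)=1$ to $\Re(s)=-1$ you cross, in addition to $s=0$, the fourth-order poles of $\zeta(\tfrac12+s\mp\mi t)^4$ at $s=\tfrac12\mp\mi t$ and the poles of the $\Gamma$-factors in $g(s,t)$ at $s=-\tfrac12\mp\mi t$. Each of these has $|\Im(s)|=|t|$, so the residues there are governed by $G^{(j)}(\pm\tfrac12\mp\mi t)$, and the decay $|G(s)|\ll\exp(-c|\Im s|)$ (or $\exp(s^2)$-type decay) is precisely what makes these $O((1+|t|)^{-B})$. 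Your paragraph about convexity bounds and Stirling on $\Re(s)=-1$ is therefore aimed at the wrong target. Finally, note that the argument implicitly requires $G$ even with $G(0)=1$ (satisfied by the paper's example $G(s)=\exp(s^2)$); you should state this, as otherwise the $s\mapsto-s$ step does not recover $-I$.
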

For $t \asymp T$, one can see the terms $(m,n)$ that contribute to this sum satisfy $mn \ll T^{4+\e}$. 
In this article we shall use various facts about $g(s,t)$.  In addition, we shall encounter the function $\widetilde{V}_{t}(x)$ defined by 
\begin{equation}
  \label{tildeV}
  \widetilde{V}_{t}(x) = \frac{1}{2 \pi \mi} \int_{(1)} \frac{G(s)}{s} g(s,t) \left( \frac{U}{t } \right)^{4s} x^{-s} \,ds,
\end{equation}
where 
\begin{equation}  \label{def:U}
  U =T^{1-\e}.
\end{equation}
 We have the following lemma providing bounds for $ g(s,t) $, its partial derivatives, and $\widetilde{V}_{t}(x)$.

\begin{lem} 
\label{lem:bd-g}
Let $A$ be a positive constant. \\
(i)  For $0 \le \Re(s) \le A$, we have 
\[
 g(s,t) =  \left( \frac{t}{2} \right)^{4s}   \left(1 + O \left( \frac{|s|^2+1}{t}  \right)\right).
 \] 
(ii)  Let $\varepsilon_0 >0$ and $i \ge 0$. For $\Re(s)=\varepsilon_0$, $|\Im(s)| \le \sqrt{T}$, and $c_1 T \le t \le c_2 T$, with $0 < c_1 < c_2$,
\begin{equation}
  \label{gstiderivatives}
   \frac{d^{i}}{dt^{i}} g(s,t) \ll_{i,\varepsilon}  |s|^{i} T^{4\varepsilon_0-i}.
\end{equation}
(iii) For $x > U^4$,
$\displaystyle \widetilde{V}_{t}(x) = O \left( \left( \frac{U^4}{x} \right)^{A} \right) \,$.
\end{lem}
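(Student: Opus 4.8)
The plan is to deduce (ii) and (iii) from (i), which is itself a routine application of Stirling's formula. For (i), write $z=\tfrac12(\tfrac12+\mi t)=\tfrac14+\tfrac{\mi t}{2}$, so that $g(s,t)=\bigl(\tfrac{\Gamma(z+s/2)}{\Gamma(z)}\cdot\tfrac{\Gamma(\bar z+s/2)}{\Gamma(\bar z)}\bigr)^{4}$. I would insert the Stirling expansion $\log\Gamma(w)=(w-\tfrac12)\log w-w+\tfrac12\log(2\pi)+O(1/|w|)$, valid here since $\Re z=\tfrac14>0$, $\Re(z+s/2)=\tfrac14+\tfrac{\Re s}{2}>0$, and $|\Im z|\asymp t$ is large. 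Expanding $\log(z+s/2)=\log z+\tfrac{s}{2z}+O(|s|^2/|z|^2)$ and collecting terms gives $\log\tfrac{\Gamma(z+s/2)}{\Gamma(z)}=\tfrac{s}{2}\log z+O\bigl(\tfrac{|s|^2+1}{t}\bigr)$; adding the conjugate contribution and using $\log z+\log\bar z=2\log(t/2)+O(1/t)$ produces $\log g(s,t)=4s\log(t/2)+O\bigl(\tfrac{|s|^2+1}{t}\bigr)$, and exponentiating gives the claimed formula.

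For (ii), the case $i=0$ is immediate from (i): $|(t/2)^{4s}|=(t/2)^{4\varepsilon_0}\asymp T^{4\varepsilon_0}$, and $(|s|^2+1)/t\ll 1$ on the stated range since $|\Im s|\le\sqrt T$. For $i\ge 1$ I would differentiate $g=e^{\log g}$ by the Fa\`{a} di Bruno formula, $\tfrac{d^i}{dt^i}g=g\cdot\sum_{\mathbf m}c_{\mathbf m}\prod_{j\ge1}\bigl(\tfrac{d^j}{dt^j}\log g\bigr)^{m_j}$ with the sum over tuples $\mathbf m=(m_1,m_2,\dots)$ satisfying $\sum_j jm_j=i$. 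Starting from $\tfrac{d}{dt}\log g=2\mi\bigl(\psi(z+\tfrac s2)-\psi(z)\bigr)-2\mi\bigl(\psi(\bar z+\tfrac s2)-\psi(\bar z)\bigr)$ and its higher $t$-derivatives (which bring in $\psi^{(j-1)}$), and using $\psi^{(\ell)}(z+\tfrac s2)-\psi^{(\ell)}(z)=\tfrac s2\int_0^1\psi^{(\ell+1)}(z+\tfrac{us}{2})\,du$ together with the Stirling bound $\psi^{(\ell)}(w)\ll_\ell|w|^{-\ell}$ for $\ell\ge1$ (the segment from $z$ to $z+\tfrac s2$ stays in the region $|w|\asymp T$, with $\Re w>0$, hence away from the poles of $\psi$), one obtains $\tfrac{d^j}{dt^j}\log g\ll_j|s|\,T^{-j}$ for every $j\ge1$. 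Each product in the sum is therefore $\ll|s|^{\sum m_j}T^{-i}$; since $\sum m_j\le\sum_j jm_j=i$ and $|s|\ge\varepsilon_0$, this is $\ll_{i,\varepsilon}|s|^iT^{-i}$, and multiplying by $|g|\ll_\varepsilon T^{4\varepsilon_0}$ gives the bound.

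For (iii), set $H(s)=g(s,t)(t/2)^{-4s}$, which is holomorphic in $\Re s>-\tfrac12$ (the only relevant singularities are poles of the numerator $\Gamma$-factors, at $\Re s\le-\tfrac12$) and satisfies $g(s,t)(U/t)^{4s}=H(s)(U/2)^{4s}$. Thus $\widetilde V_t(x)=\tfrac1{2\pi\mi}\int_{(1)}\tfrac{G(s)}{s}H(s)\bigl(\tfrac{U^4}{16x}\bigr)^{s}\,ds$, and I would shift the contour to $\Re s=A$; no pole is crossed, since the integrand's only pole (at $s=0$) lies left of both lines, and the shift is licensed by the super-exponential decay of $G$ in $\Im s$, which dominates the at most exponential growth of $g$ on horizontal segments. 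On $\Re s=A$ one has $\bigl|\bigl(\tfrac{U^4}{16x}\bigr)^{s}\bigr|=\bigl(\tfrac{U^4}{16x}\bigr)^{A}$, so $\widetilde V_t(x)\ll\bigl(\tfrac{U^4}{x}\bigr)^{A}\int_{(A)}\bigl|\tfrac{G(s)}{s}H(s)\bigr|\,|ds|$, and it remains to show this last integral is $\ll_A 1$ uniformly for $t\asymp T$: for $|\Im s|\le\sqrt T$ part (i) gives $|H(s)|\ll 1$ and $G$ supplies convergence, while for $|\Im s|>\sqrt T$ a crude Stirling bound shows $|H(s)|$ is at most $T^{-4A}$ times a factor that is exponential in $|\Im s|$ and polynomial in $T+|\Im s|$, so the Gaussian $|G(s)|=e^{A^2-|\Im s|^2}$ makes this tail $\ll e^{-T/3}$. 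I expect the main obstacle to be exactly this last step of (iii): making the contour shift rigorous and controlling $g(s,t)$ (equivalently $H(s)$) in the range $|\Im s|\gg t$, where the clean asymptotic of part (i) breaks down; the remaining content is a direct application of Stirling's formula and the chain rule.
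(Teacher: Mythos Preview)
Your proposal is correct and follows the same route as the paper. Parts (i) and (ii) are omitted there by reference to \cite{Ng}, Lemma~2.2 (the same Stirling and Fa\`a di Bruno argument you sketch), and for (iii) the paper likewise shifts the contour to $\Re(s)=A$ and bounds via (i), simply invoking it on the whole line rather than splitting off the tail $|\Im s|>\sqrt T$ as you do.
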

\begin{proof} As the proofs of Lemma \ref{lem:bd-g}(i) and (ii) are very similar to Lemma 2.2(ii) and (iii) in \cite{Ng}, we shall omit them.  Now, we prove Lemma 2.2(iii).
Moving the line of the integration in $\widetilde{V}_{t}(x) $ to $\Re(s) = A$,  we see that $\widetilde{V}_{t}(x)$ becomes
\begin{align*}
 \frac{1}{2 \pi \mi} \int_{(A)} \frac{G(s)}{s} g(s,t) \left( \frac{U}{t } \right)^{4s} x^{-s} \,ds
\ll \int_{A-\mi \infty}^{A+\mi\infty}  \frac{|G(s)|}{|s|} \left( \frac{U^4}{16x}\right)^A  \left( 1 + \frac{|s|^2+1}{t}\right) \,|ds|
\ll \left( \frac{U^4}{x}\right)^A.
\end{align*}
\end{proof}

\section{Truncating the sum}

As there are many small parameters involved in our argument, for the convenience of the reader, we track of them here. 
\begin{itemize}
\item $\varepsilon$ is used in the definition $U=T^{1-\varepsilon}$ in \eqref{def:U}.
\item $\e_0$ is an arbitrary small parameter that differs line by line. It appears in Conjecture  \ref{divconj}, \eqref{gstiderivatives}, Lemma \ref{lem:Z1}, and the integral above \eqref{equ:diagonal02}.

\item  $\varepsilon_1$ appears in \eqref{fstardefn}, the definition of $f^{*}$, where we integrate over $\Re(s) =\e_1$. Furthermore, $\e_1$ is used in the condition $|\log(\tfrac{m}{n})| \ll T_{0}^{-1+\varepsilon_1}$ (see \eqref{IM-2nd-exp}). In addition, we choose  $P=T^{1+ \e_1} T_{0}^{-1}$ (to apply Conjecture \ref{divconj}) in Section \ref{apply-divconj}. 
\item $\varepsilon_2$ is introduced for cutting the sum at $mn \ll U^{1+\varepsilon_2}$ in Section \ref{smooth-partition}. 
\item $\e_3$ is for the line $\Re(s) = \e_3$. In Section \ref{expandingr}, we move the integral in $ \tilde{I}_{M,N}
$ from $\Re(s) = \e_1$ to $\Re(s) =  \e_3$. In addition, we move the integral in \eqref{IONO} from $\Re(s) =  1$ to $\Re(s) =  \e_3$, in Section \ref{e3-line}. We assume $0 < \delta< \e_3 < 0.15$, where $\delta$ is a parameter as described below.
\item $\e'$ is related to range of $r$ in Conjecture \ref{divconj}; more precisely, $|r| \le M^{1-\e'}$. 
\item $\eta$ is a parameter used to define $\omega^{+}(t)$ and $\omega^{-}(t)$ in the proof of Theorem \ref{8thmoment} located below \eqref{IomegadecompB}. 
\item $\eta_0$ is a parameter introduced below \eqref{upbd-2} and \eqref{upbd-3} in the proof of Theorem \ref{8thmoment} to set $j$.

\item  $\delta$ appears in Lemma \ref{Stirling} for the condition $|a_{i_1}|, |b_{i_2}| \leq \delta$ and in the proof of Lemma \ref{FancyG} as the radius of the circle $C(0,\delta)$ centred at $0$. We assume $0<\delta<\frac{1}{10}$.
\item $\delta'$ is used for $|u_i| < \delta'$ in Proposition \ref{Hlemma}. 
\item $r_1, r_2$ are the radii for $\mathcal{B}_1$, $\mathcal{B}_2$, respectively, where $|r_1|, |r_2| \leq \delta< \frac{1}{10}$ (see Conjecture \ref{divconj}).

\end{itemize}

We begin by evaluating the smoothed eighth moment 
\begin{equation*}
 I_{\omega} := \int_{-\infty}^{\infty}  \omega(t)  |\zeta(\oh+ \mi t)|^8  
 \, dt
\end{equation*}
where 
$\omega$ is a function from $\mathbb{R}$ to $\mathbb{C}$  that satisfies the following three properties:
\begin{align}
 &
 \text{(i) }\omega \text{ is smooth}; \nonumber\\
&\text{(ii) }\label{cond2} \text{the support of }  \omega  \text{ lies in } [c_1T,c_2T]  \text{ where } 0 < c_1 < c_2
\text{ are positive absolute constants}; \\
& \text{(iii) }\label{cond3} \text{there exists } T_0  
\text{ such that } T^{\frac{1}{2}} \le T_0 \ll T \text{ and }
  \omega^{(j)}(t) \ll T_{0}^{-j} \text{ for } j \in \mathbb{N} \cup \{ 0 \} \text{ and all }  t \in \R. 
\end{align}
From \eqref{smoothafe} with $B=2$, it follows that 
\begin{align}
I_{\omega}  &= 2\int_{-\infty}^{\infty} \omega(t) \left( \sum_{m,n=1}^{\infty} \frac{\tau_4(m) \tau_4(n)}{(mn)^{\frac{1}{2}}}
\left( \frac{m}{n} \right)^{- \mi t}
V_{t}( \pi^4 mn)   \right) \, dt
 + O( T^{-1}) \label{Iomega} \\
 &= 2 \int_{-\infty}^{\infty} \omega(t) \sum_{m,n=1}^{\infty} \frac{\tau_4(m) \tau_4(n)}{(mn)^{\frac{1}{2}}}
\left( \frac{m}{n} \right)^{-\mi t}
\frac{1}{2\pi \mi} \int_{(1)} \frac{G(s)}{s} g(s,t) t^{-4s} \left( \frac{t^4}{\pi^4 mn }\right)^s \,ds \,dt +O(T^{-1}). \nonumber
\end{align}
One problem with this expression is that the Dirichlet polynomials are too long and they need to be 
made slightly shorter. 
This will be done by introducing the expression
\begin{equation*}
\begin{split}
\tilde{I}_{\omega} := \tilde{I}_{\omega}(U)= 2 \int_{-\infty}^{\infty} \omega(t)  \sum_{m,n=1}^{\infty} \frac{\tau_4(m) \tau_4(n)}{(mn)^{\frac{1}{2}}}
\left( \frac{m}{n} \right)^{- \mi t}
\frac{1}{2\pi \mi} \int_{(1)} \frac{G(s)}{s} g(s,t) t^{-4s} \left( \frac{U^4}{\pi^4 mn }\right)^s \, ds \,dt
\end{split}
\end{equation*}
where  $U =T^{1-\e}$ chosen as in \eqref{def:U}.
In the following key proposition we show $I_{\omega}$ is closely approximated by $\tilde{I}_{\omega}$ for $U =T^{1-\e}$.
This truncation argument has previously been used in the works of Soundararajan-Young \cite{SY} on the second moment of modular $L$-functions, 
Chandee-Li \cite{CL} on the eighth moment of Dirichlet $L$-functions, and Shen \cite{Sh} on the fourth moment of quadratic Dirichlet $L$-functions.

\begin{prop}\label{replacing-I}
Under the Riemann hypothesis, for any $\varepsilon>0$, there exists $T_\varepsilon$ such that
$$
I_{\omega} - \tilde{I}_{\omega} = O(\varepsilon T (\log T)^{16}),
$$ 
for $T\ge T_{\varepsilon}$, where the implied constant is independent of $\varepsilon$.
\end{prop}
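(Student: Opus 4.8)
The plan is to show that the difference $I_\omega - \tilde I_\omega$ is controlled by the contribution of the ``long'' part of the Dirichlet polynomial, namely the terms with $mn \gg U^4 = T^{4-4\varepsilon}$, and then to bound that contribution using the shifted-moment estimate of Theorem \ref{refined_Harper}. First I would write $I_\omega - \tilde I_\omega$ as a single expression by subtracting the two integral representations: the integrand in each is the same except for the factor $(t^4/\pi^4mn)^s$ versus $(U^4/\pi^4 mn)^s$, so after pulling the $t$-integral inside one is left with $2\int \omega(t) \sum_{m,n} \frac{\tau_4(m)\tau_4(n)}{(mn)^{1/2}} (m/n)^{-\mi t} \big(V_t(\pi^4mn) - \widetilde V_t(\pi^4 mn)\big)\,dt$ (after inserting the $t^{-4s}$ factor, the kernel $V_t$ with argument normalized appropriately; cf. \eqref{tildeV}). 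The key structural point is that Lemma \ref{lem:bd-g}(iii) gives $\widetilde V_t(x) = O((U^4/x)^A)$ for $x > U^4$, so on the range $mn > U^{4+\varepsilon_2}$ the $\widetilde V_t$ term is negligible and the difference is essentially $V_t(\pi^4 mn)$ there; meanwhile for $mn \le U^4$ the two kernels are both close to $1$ and one shows their difference is small by moving contours and using part (i) of Lemma \ref{lem:bd-g} together with the rapid decay of $G$.

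Next I would reduce everything to estimating a short sum over a dyadic block $M \le mn \le 2M$ with $M$ ranging dyadically from about $U^4$ up to $T^{4+\varepsilon}$ (beyond which $V_t$ itself is negligible by the standard bound on $V_t$ following \eqref{smoothafe}). For each such block, the relevant quantity is $\int \omega(t)\, |D_M(t)|^2$-type expression, where $D_M(t) = \sum_{M \le m n \le 2M} \frac{\tau_4(m)\tau_4(n)}{(mn)^{1/2}}(m/n)^{-\mi t} W_t(mn)$ for a smooth weight $W_t$; but since the variables $m,n$ are entangled only through $(m/n)^{-\mi t}$ and $mn$, I would instead open the square root cancellation differently: apply Cauchy--Schwarz in $t$ against $\omega$, bounding one factor by the pointwise size $|\zeta(\tfrac12 + \mi t)|^8$ via the approximate functional equation run in reverse, and the other factor by a genuine shifted fourth-moment. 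More precisely, the cleanest route is: write the sum over $m,n$ as a contour integral in a complex variable $s$ (from the definition of $V_t$), so that $\sum_{m,n}\frac{\tau_4(m)\tau_4(n)}{(mn)^{1/2+s}}(m/n)^{-\mi t} = \zeta(\tfrac12+s+\mi t)^4\zeta(\tfrac12+s-\mi t)^4$, and then the difference $I_\omega - \tilde I_\omega$ becomes a contour integral of $\int\omega(t)\,\zeta(\tfrac12+s+\mi t)^4\zeta(\tfrac12+s-\mi t)^4\,dt$ against $\frac{G(s)}{s}g(s,t) t^{-4s}\big((t/U)^{4s}\cdot U^{4s}\pi^{-4s} - (t^4\pi^{-4})^s\big)$-type kernels. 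Taking the $s$-contour at $\Re(s) = \varepsilon_0$ (just to the right of the critical line, allowed under RH), the inner $t$-integral is exactly a shifted eighth moment of the shape $\int_0^T |\zeta(\tfrac12 + \mi t + \mi t_0)|^{8}$-type, which by Theorem \ref{refined_Harper} with $k=8$ (and shift $t_0 = \Im(s)/\,$; more precisely splitting $\zeta^4\zeta^4$ via Cauchy--Schwarz into two copies of $\int|\zeta(\tfrac12+\mi t\pm\mi t_0)|^8$) is $\ll T(\log T)^{16}\mathcal G(T,t_0)^{16}$.

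The main obstacle, and the point that needs the most care, is making the gain genuinely come out as a \emph{factor of $\varepsilon$} rather than just $o(1)$ or a power of $\log$. This is where the choice $U = T^{1-\varepsilon}$ and the shape of $\mathcal G(T,t_0)$ interact: when $mn$ is forced to be $\gg U^4 = T^{4-4\varepsilon}$, the contour integral over $s = \varepsilon_0 + \mi v$ effectively localizes to $|v| \ll 1$ because of the decay of $G$, and the factor $(t/U)^{4s}$ versus $t^{4s}$ contributes a size $(t/U)^{4\varepsilon_0} \asymp T^{4\varepsilon\varepsilon_0}$ which we keep harmless by taking $\varepsilon_0$ small; the real saving is that on the tail $mn > U^4$, the weight forces $(U^4/mn)^{\varepsilon_0}$ which sums the dyadic blocks geometrically and the ``first'' block $mn \asymp U^4$ carries a secondary weight of size roughly $\log(mn/U^4)$ worth of blocks times $(\log T)^{16}$ each — to convert this to $\varepsilon T(\log T)^{16}$ one uses that $\log(T^{4+\varepsilon}/T^{4-4\varepsilon})/\log T \asymp \varepsilon$, i.e. there are only $O(\varepsilon \log T)$ dyadic blocks in the relevant range, and on each the shifted-moment bound gives $\ll T(\log T)^{16}$ with the $\mathcal G$-factor being $O(1)$ since the effective shift $t_0 = v$ is bounded. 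Carefully accounting for the $\mathcal G(T,t_0)^{16}$ factor (it is $\asymp 1$ for $|t_0| \asymp 1$ but blows up like $(\log T)^{16}$ as $t_0 \to 0$) against the $s$-integration measure near $v = 0$ is the delicate estimate; one handles the neighborhood $|v| < 1/\log T$ separately, where $\mathcal G \asymp \log T$ but the measure of that $v$-range is $1/\log T$, so its contribution is also $\ll \varepsilon T(\log T)^{16}$ after summing the $O(\varepsilon\log T)$ blocks. Assembling these pieces, and using the trivial bound for $mn > T^{4+\varepsilon}$ and the contour-shift estimate for $mn \le U^4$, yields $I_\omega - \tilde I_\omega = O(\varepsilon T(\log T)^{16})$ with the implied constant independent of $\varepsilon$, as claimed.
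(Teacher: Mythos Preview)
Your skeleton is right --- convert the $m,n$-sum to $\zeta(\tfrac12+s+\mi t)^4\zeta(\tfrac12+s-\mi t)^4$ via the Dirichlet series, obtain a contour integral for $I_\omega - \tilde I_\omega$ with kernel $\frac{G(s)}{s}g(s,t)t^{-4s}\pi^{-4s}(t^{4s}-U^{4s})$, and feed the $t$-integral into Theorem \ref{refined_Harper} --- but two key steps are misidentified, and as written the argument does not close.

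First, the contour should sit at $\Re(s)=0$, not $\Re(s)=\varepsilon_0>0$. On $\Re(s)=0$ the kernel factor $\frac{t^{4s}-U^{4s}}{s}\big|_{s=\mi t_0}$ is bounded by $\log(t^4/U^4)\ll\varepsilon\log T$ directly --- \emph{that} is where the $\varepsilon$ comes from, not from counting $O(\varepsilon\log T)$ dyadic blocks. Your block-counting picture does not survive translation to the contour-integral setting: once you write $\zeta^4\zeta^4$, there are no blocks left to count, and on $\Re(s)=\varepsilon_0$ the kernel is $\ll T^{4\varepsilon_0}/\varepsilon_0$ with no visible $\varepsilon$-saving. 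Moreover Theorem \ref{refined_Harper} is stated on the critical line, so at $\Re(s)=\varepsilon_0$ you cannot invoke it as written.

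Second, you should apply Theorem \ref{refined_Harper} with $k=4$ directly to $\int\omega(t)\,|\zeta(\tfrac12+\mi t+\mi t_0)|^4|\zeta(\tfrac12+\mi t-\mi t_0)|^4\,dt$, which is already of the required form; no Cauchy--Schwarz is needed, and using it (reducing to two unshifted eighth moments) destroys the $\mathcal G(T,t_0)$-dependence and leaves you with $\varepsilon T(\log T)^{17}$, a logarithm too large. With $k=4$ the bound is $T(\log T)^8\,\mathcal G(T,t_0)^8$, and your splitting of the $t_0$-range near $0$ is then exactly right: on $|t_0|\le 1/\log T$ the bound is $T(\log T)^{16}$ over a set of measure $\asymp 1/\log T$; on $1/\log T\le|t_0|\le 1/200$ one uses $\mathcal G^8\ll |t_0|^{-8}$ and integrates; on larger $|t_0|$ the $\mathcal G$-factor is harmless. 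Multiplying throughout by the $\varepsilon\log T$ from the kernel gives $\varepsilon T(\log T)^{16}$.
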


\begin{proof}
We start with the observation
\begin{align*}
&I_{\omega} - \tilde{I}_{\omega} \\
 &= 2 \int_{-\infty}^{\infty} \omega(t)  \sum_{m,n=1}^{\infty} \frac{\tau_4(m) \tau_4(n)}{(mn)^{\frac{1}{2}}}
\left( \frac{m}{n} \right)^{-\mi t}
\frac{1}{2\pi \mi} \int_{(1)} G(s) g(s,t) t^{-4s}  \frac{1}{(\pi^4 mn)^s } \left( \frac{t^{4s} - U^{4s}}{s}  \right) \,ds\,dt +O(1) \\
& = \int_{-\infty}^{\infty} \omega(t) \frac{1}{\pi \mi } \int_{(1)} \zeta (\tfrac{1}{2} +s +\mi t)^4 \zeta (\tfrac{1}{2} +s -\mi t)^4 
     G(s) g(s,t) t^{-4s}  \pi^{-4s}  \left( \frac{t^{4s} - U^{4s}}{s} \right) \, ds\,dt +O(1).
\end{align*}
As $\frac{t^{4s} -U^{4s}}{s} $ is entire, if we move the last integration to the line $\Re(s) = 0$, we will encounter poles at $\frac{1}{2}  \pm \mi t$. Since $G^{(j)}(\frac{1}{2}  \pm \mi t)$ decays rapidly, the contribution of these poles is $O(1)$. Hence, moving the integration, by Lemma \ref{lem:bd-g}(i), we deduce
\begin{align*}
&I_{\omega} - \tilde{I}_{\omega} \\
 &= \int_{-\infty}^{\infty} \omega(t) \frac{1}{\pi \mi } \int_{(0)} \zeta (\tfrac{1}{2} +s +\mi t)^4 \zeta (\tfrac{1}{2} +s -\mi t)^4 
     G(s)   (2\pi)^{-4s}  \left( \frac{t^{4s} - U^{4s}}{s}  \right) \,ds\,dt \\
     &+ O \left( \int_{-\infty}^{\infty} \omega(t)  \int_{(0)} \left|\zeta (\tfrac{1}{2} +s +\mi t)^4 \zeta (\tfrac{1}{2} +s -\mi t)^4 
     G(s) \frac{|s|^2+1}{t}  \left(  \frac{t^{4s} - U^{4s}}{s}  \right) \right| \,d|s|\,dt\right) +O(1).
\end{align*}
Recalling that   $U =T^{1-\e}$, we have  $\frac{t^{4 \mi t_0  } -U^{4 \mi t_0  }}{\mi t_0}  \ll \log \frac{t^4}{U^4} \ll \varepsilon \log T$. Thus, the first big-O term in the above expression  is 
\begin{align}
&\ll \varepsilon \log T \int_{-\infty}^{\infty} \omega(t)    \int_{-\infty}^{\infty} |\zeta (\tfrac{1}{2} +\mi t_0 +\mi t)|^4 |\zeta (\tfrac{1}{2} +\mi t_0 -\mi t)|^4 
     |G(\mi t_0)| \frac{|t_0|^2+1}{t}   \,dt_0 \,dt\nonumber \\
    & \ll \frac{\varepsilon \log T}{T}   \int_{-\infty}^{\infty}
     |G(\mi t_0)| ({|t_0|^2+1})   \int_{-\infty}^{\infty} \omega(t) |\zeta (\tfrac{1}{2} +\mi t +\mi t_0)|^4 |\zeta (\tfrac{1}{2} +\mi t -\mi t_0)|^4  \,dt \,dt_0. 
     \label{equ:O-trunc}
\end{align}
Since  $|\zeta (\tfrac{1}{2} +\mi t \pm \mi t_0)| \ll |t_0| + |t|$ and \eqref{bdG}, the contribution of  $|t_0| >  \sqrt{T}$ to \eqref{equ:O-trunc}  is $\ll 1$. For $ |t_0|\le \sqrt{T} $, applying Theorem \ref{refined_Harper},  we derive
\begin{equation} \label{equ:bd-t0<sqrtT}
\int_0^T |\zeta(\oh+\mi t+\mi t_0)|^{4}|\zeta(\oh+\mi t -\mi t_0)|^{4} \,dt \ll T(\log T)^{16}.
\end{equation}
Hence, we obtain
$$
 \int_{|t_0| \le \sqrt{T}}  |G(\mi t_0)|  ({|t_0|^2+1})  \int_{-\infty}^{\infty} \omega(t)  |\zeta (\tfrac{1}{2} +\mi t +\mi t_0)|^4 |\zeta (\tfrac{1}{2} +\mi t- \mi t_0)|^4  \,dt  \,d t_0 \ll T(\log T)^{16}.
$$
Therefore,  \eqref{equ:O-trunc} is $\ll \varepsilon( \log T)^{17}$, and we arrive at
\begin{align*}
I_{\omega} - \tilde{I}_{\omega}  = \int_{-\infty}^{\infty} \omega(t) dt\frac{1}{\pi \mi } \int_{(0)} \zeta (\tfrac{1}{2} +s +\mi t)^4 \zeta (\tfrac{1}{2} +s -\mi t)^4 
     G(s)   (2\pi)^{-4s} \frac{t^{4s} - U^{4s}}{s} \,ds      + O( \varepsilon (\log T)^{17}).
\end{align*}
The double integral above is 
\begin{align}
\ll   \varepsilon \log T \int_{-\infty}^{\infty}  |G(\mi t_0)|    \int_{-\infty}^{\infty} \omega(t)  |\zeta (\tfrac{1}{2} +\mi t +\mi t_0)|^4 |\zeta (\tfrac{1}{2} +\mi t- \mi t_0)|^4 \, dt \, d t_0 .
\label{equ:trunc-1}
\end{align}
As argued above, the contribution of $|t_0| > \sqrt{T}$ to \eqref{equ:trunc-1} is $\ll 1$. So, it remains to consider the contribution of $|t_0| \le \sqrt{T}$. It follows from \eqref{equ:bd-t0<sqrtT} that
$$
 \varepsilon \log T  \int_{\log T< |t_0| \leq \sqrt{T} }  |G(\mi t_0) |\int_{-\infty}^{\infty} \omega(t)  |\zeta (\tfrac{1}{2} +\mi t +\mi t_0)|^4 |\zeta (\tfrac{1}{2} +\mi t- \mi t_0)|^4  \,dt  \,d t_0 \ll \varepsilon T.
$$
For $\frac{1}{200} <|t_0| \le \log T  $, it follows from Theorem \ref{refined_Harper} that  
$$
\int_0^T |\zeta(\oh+\mi t+\mi t_0)|^{4}|\zeta(\oh+\mi t -\mi t_0)|^{4} \,dt \ll T(\log T)^{8} (\log\log T)^8 \ll T  (\log T)^{9}
$$
and so
$$
 \e \log T \int_{\frac{1}{200} <|t_0| \le \log T}  |G(\mi t_0) |    \int_{-\infty}^{\infty} \omega(t)  |\zeta (\tfrac{1}{2} +\mi t +\mi t_0)|^4 |\zeta (\tfrac{1}{2} +\mi t- \mi t_0)|^4  \,dt  \,d t_0 \ll \e T  (\log T)^{10}.
$$
Finally, by Theorem \ref{refined_Harper}, for $|t_0| \leq \frac{1}{200}$, we have
\begin{align*}
\int_{-\infty}^{\infty} \omega(t)  |\zeta (\tfrac{1}{2} +\mi t +\mi t_0)|^4 |\zeta (\tfrac{1}{2} +\mi t- \mi t_0)|^4 \,dt  \ll  T  (\log T)^{8} \min \left\{ \frac{1}{|2t_0|^8}, (\log T)^8 \right\}.
\end{align*}
Thus, the contribution of  $|t_0| \leq \frac{1}{200}$ to \eqref{equ:trunc-1} is 
\begin{align*}
&\ll   \varepsilon \log T  \int_{ |t_0| \leq \frac{1}{\log T} }  |G(\mi t_0) |  T (\log T)^{16} \,dt_0 +  \varepsilon \log T
 \int_{ \frac{1}{\log T}\le |t_0| \leq \frac{1}{200}}  |G(\mi t_0)| T (\log T)^{8} \frac{1}{|t_0|^8} \,dt_0
 \\
 &\ll  \varepsilon \log T \cdot T (\log T)^{15} +  \varepsilon \log T \cdot  T(\log T)^{8} \cdot  (\log T)^{7} \\
&  \ll \varepsilon T( \log T)^{16}.
\end{align*}
Hence, gathering everything together, we obtain
\[
I_{\omega} - \tilde{I}_{\omega}  \ll \varepsilon T( \log T)^{16}
\]
as desired.
\end{proof}

We now have $
  I_{\omega} = \tilde{I}_{\omega} + O(\varepsilon T(\log T)^{16})$.
We shall divide $\tilde{I}_{\omega}$ into diagonals parts  $m=n$ and off-diagonal parts $m \ne n$ as follows.
Setting
\begin{align}
 \label{ID}
  I_{D} & := 2 \int_{-\infty}^{\infty} \omega(t)  \sum_{m=1}^{\infty} \frac{\tau_4(m)^2}{m}
\frac{1}{2\pi \mi} \int_{(1)} \frac{G(s)}{s} g(s,t) t^{-4s} \left( \frac{U^4}{\pi^4 m^2 }\right)^s  \,ds \,dt, \\
 \label{IO}
  I_{O} & :=  2 \int_{-\infty}^{\infty} \omega(t)  \sum_{\substack{ m,n=1 \\ m \ne n}}^{\infty} \frac{\tau_4(m) \tau_4(n)}{(mn)^{\frac{1}{2}}}
\left( \frac{m}{n} \right)^{-\mi t}
\frac{1}{2\pi \mi} \int_{(1)} \frac{G(s)}{s} g(s,t) t^{-4s} \left( \frac{U^4}{\pi^4 mn }\right)^s  \,ds \,dt,
\end{align}
we then obtain
\begin{equation*}
   I_{\omega}  = I_D + I_O + O(\varepsilon T(\log T)^{16}). 
\end{equation*}
We shall evaluate $I_D$ and $I_O$ in the following propositions and prove them in the later sections.  
\begin{prop} \label{diagonal}
Unconditionally, we have
\begin{equation*}
  I_D = \frac{4 a_4( \log U)^{16}}{638512875} \int_{-\infty}^{\infty} \omega(t) 
 \, dt + O(T (\log T)^{15}). 
\end{equation*}
\end{prop}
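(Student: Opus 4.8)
The plan is to identify the $m$-sum as an Euler-product correction of $\zeta(1+2s)^{16}$, move the $s$-contour past the resulting pole of order $17$ at $s=0$, and extract the main term from the residue. First, since $\omega$ has compact support and $\sum_{m\ge1}\tau_4(m)^2m^{-1-2s}$ converges absolutely for $\Re(s)>0$, Fubini's theorem permits interchanging the $t$-integral, the $m$-sum, and the $s$-integral in \eqref{ID}. Using the multiplicativity of $\tau_4$ and writing $w=1+2s$, one has
\begin{equation*}
\sum_{m\ge1}\frac{\tau_4(m)^2}{m^{w}}=\zeta(w)^{16}A_4(w),\qquad A_4(w):=\prod_p(1-p^{-w})^{16}\sum_{j\ge0}\tau_4(p^j)^2p^{-jw}.
\end{equation*}
Since $\tau_4(p^j)=\binom{j+3}{3}$, a short computation shows that the $p$-th factor of $A_4$ equals $1+O(p^{-2w})$; hence $A_4$ is holomorphic and bounded in every half-plane $\Re(w)\ge\tfrac12+\eta$, and comparison with the definition \eqref{gkak} of $a_4$ gives $A_4(1)=a_4$. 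Thus
\begin{equation*}
I_D=2\int_{-\infty}^{\infty}\omega(t)\,\frac{1}{2\pi\mi}\int_{(1)}\frac{G(s)}{s}\,g(s,t)\,t^{-4s}\Big(\frac{U^4}{\pi^4}\Big)^{s}\zeta(1+2s)^{16}A_4(1+2s)\,ds\,dt.
\end{equation*}

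Next I would move the inner integral from $\Re(s)=1$ to $\Re(s)=-\delta$ with $\delta\in(0,\tfrac14)$ fixed. In the strip $-\delta\le\Re(s)\le1$ the integrand is holomorphic except for a pole of order $17$ at $s=0$ (order $16$ from $\zeta(1+2s)^{16}$ and order $1$ from $1/s$; the poles of $g(s,t)$ lie in $\Re(s)\le-\tfrac12$ and those of $A_4(1+2s)$ in $\Re(s)\le-\tfrac14$). The horizontal pieces vanish in the limit because $G(s)$ decays exponentially in $|\Im(s)|$ while $g(s,t)$ (by Stirling) and $\zeta(1+2s)^{16}A_4(1+2s)$ grow only polynomially there. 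On the line $\Re(s)=-\delta$, a Stirling estimate (the extension of Lemma~\ref{lem:bd-g}(i) to slightly negative real part, valid since $t\asymp T$ stays far from the poles of the Gamma factors) gives $|g(s,t)t^{-4s}|\ll1$, so the shifted integral is $\ll U^{-4\delta}\int_{\mathbb R}|G(-\delta+\mi u)|(1+|u|)^{C}\,du\ll U^{-4\delta}$; integrated against $2\omega(t)$ this contributes $O(TU^{-4\delta})=o(T)$.

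It remains to compute $R(t):=\operatorname{Res}_{s=0}$ of the integrand. Writing $\zeta(1+2s)^{16}=h(s)/(2s)^{16}$ with $h$ holomorphic near $0$ and $h(0)=1$, and invoking Lemma~\ref{lem:bd-g}(i) as $g(s,t)t^{-4s}=2^{-4s}\big(1+O((|s|^2+1)/t)\big)$, we get
\begin{equation*}
R(t)=\frac{1}{2^{16}\cdot16!}\,\frac{d^{16}}{ds^{16}}\Big[G(s)\,h(s)\,A_4(1+2s)\Big(\frac{U}{2\pi}\Big)^{4s}\big(1+O(\tfrac{|s|^2+1}{t})\big)\Big]_{s=0}.
\end{equation*}
Differentiating the factor $(U/2\pi)^{4s}$ all sixteen times and evaluating the remaining factors at $s=0$ (where $G(0)=h(0)=1$ and $A_4(1)=a_4$) yields the leading term $\frac{4^{16}}{2^{16}\cdot16!}a_4(\log U)^{16}=\frac{2^{16}}{16!}a_4(\log U)^{16}$; all other contributions are polynomials in $\log U$ of degree $\le15$ (with absolute coefficients) plus an $O((\log U)^{16}/t)$ arising from the $O((|s|^2+1)/t)$ factor. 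Consequently
\begin{equation*}
I_D=2\int_{-\infty}^{\infty}\omega(t)R(t)\,dt+o(T)=\frac{2^{17}a_4}{16!}(\log U)^{16}\int_{-\infty}^{\infty}\omega(t)\,dt+O\!\Big((\log T)^{15}\int|\omega|\Big)+O\!\Big((\log T)^{16}\int\frac{|\omega|}{t}\Big).
\end{equation*}
Because $\omega\ll1$ is supported in $[c_1T,c_2T]$ we have $\int|\omega|\ll T$ and $\int|\omega|/t\ll1$, so both error terms are $O(T(\log T)^{15})$; finally $2^{17}/16!=2^{17}/(2^{15}3^{6}5^{3}7^{2}\cdot11\cdot13)=4/638512875$, which is the asserted identity.

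\emph{Main obstacle.} The subtle part is the residue bookkeeping: one must check that, after integrating against $\omega$, every term besides the pure $\frac{2^{17}a_4}{16!}(\log U)^{16}\int\omega$ is $O(T(\log T)^{15})$ — in particular that the $t$-dependence of $R(t)$ enters only through the degree-$\le15$ polynomial part or the $1/t$-small error — and confirm the arithmetic identity for the constant. The contour shift also requires the (routine) extension of the Gamma-ratio asymptotics of Lemma~\ref{lem:bd-g}(i) to $\Re(s)=-\delta$.
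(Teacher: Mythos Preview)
Your proposal is correct and follows essentially the same approach as the paper: identify $\sum_m \tau_4(m)^2 m^{-1-2s}=\zeta(1+2s)^{16}$ times a holomorphic correction factor equal to $a_4$ at $s=0$, shift the $s$-contour past the order-$17$ pole at $s=0$, and read off the leading term of the residue (your arithmetic $2^{17}/16!=4/638512875$ is correct). The only cosmetic difference is the order of operations: the paper first replaces $g(s,t)t^{-4s}$ by $2^{-4s}$ on the line $\Re(s)=\varepsilon_0$ (bounding the resulting error by $O(T^{4\varepsilon_0})$) and only then shifts the now $t$-independent integrand to $\Re(s)=-\tfrac14+\varepsilon_0$, whereas you shift first and invoke the Stirling approximation inside the residue; the paper's ordering is slightly cleaner because it avoids the need to control $t$-dependent derivatives of $g(s,t)t^{-4s}-2^{-4s}$ at $s=0$, but your route works too once one notes (via Cauchy's formula on a small circle) that these derivatives are $O(1/t)$.
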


\begin{prop} \label{offdiagonal} 
Assuming Conjecture \ref{divconj}, we have
\begin{equation*}
  I_O =   -\frac{13381a_4}{2615348736000} \int_{-\infty}^{\infty} \omega(t)  (\log T)^{16} \, dt  + O(\varepsilon T (\log T)^{16})
  + O \left(T \left( \frac{T}{T_0} \right)^{1+C} \right),
\end{equation*}
where $C$ is defined as in the statement of Conjecture \ref{divconj}. 
\end{prop}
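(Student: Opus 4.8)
The plan is to evaluate the off-diagonal sum $I_O$ by first reducing the $(m,n)$ sum to the essentially finite range $mn \ll U^{1+\varepsilon_2}$ (using Lemma \ref{lem:bd-g}(iii) to discard the tail at acceptable cost, since $\widetilde V_t(x)$ decays like $(U^4/x)^A$), and then splitting into dyadic blocks $m \asymp M$, $n \asymp N$ with $MN \ll U^{2+\varepsilon_2}$. On each block one writes the difference $m-n=r$ with $r \neq 0$, and recognizes the inner sum over $(m,n)$ with $m-n=r$ as a quaternary additive divisor sum $D_{f;4,4}(r)$ for a suitable smooth weight $f(x,y)$ coming from $\omega$, the factor $(m/n)^{-\mathrm{i}t}$ integrated against $t$, and the $s$-integral defining $\widetilde V_t$. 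The integration over $t$ produces (after stationary phase / repeated integration by parts, exploiting property \eqref{cond3} of $\omega$) a factor that localizes $|\log(m/n)| \ll T_0^{-1+\varepsilon_1}$, equivalently $|r| \ll M T_0^{-1+\varepsilon_1}$; this is why one takes $P = T^{1+\varepsilon_1}T_0^{-1}$ in Conjecture \ref{divconj}, and one must check $|r| \le M^{1-\varepsilon'}$ holds in the relevant ranges so that the conjecture applies. Applying Conjecture \ref{divconj} replaces each $D_{f;4,4}(r)$ by its main term — a double contour integral over circles $\mathcal B_1,\mathcal B_2$ of $\zeta^4(z_1)\zeta^4(z_2)$ times the arithmetic factor $\sum_q c_q(r)G_4(z_1,q)G_4(z_2,q)q^{-z_1-z_2}$ times a smooth integral — plus an error $O(P^C M^{1/2+\varepsilon_0})$ per $r$.

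Next I would sum the error terms: over $r$ in a block (there are $\ll M T_0^{-1+\varepsilon_1}$ values) and over $O(\log^2 T)$ dyadic blocks with $M \ll U$, giving a total error $\ll P^C U^{1/2+\varepsilon_0} \cdot U T_0^{-1+\varepsilon_1} \cdot \log^2 T$. With $U = T^{1-\varepsilon}$ and $P = T^{1+\varepsilon_1}T_0^{-1}$, and using $T_0 \ge T^{1/2}$, this should be $\ll T^{1-\delta} (T/T_0)^{1+C}$ for some $\delta>0$ — here the point emphasized in Remark 5 is crucial: because the polynomial has length $U^2 = T^{2-2\varepsilon}$ rather than $T^2$, the square-root-type saving $U^{1/2+\varepsilon_0} \cdot$ (number of shifts) beats $T$ by a power $T^{\varepsilon-\varepsilon_0\cdots}$, hence absorbs into $O(T(T/T_0)^{1+C})$. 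For the main terms, I would first carry out the sum over $r$ of $c_q(r)$ against the weight, then reassemble the dyadic blocks and remove the truncation $mn \ll U^{1+\varepsilon_2}$, arriving at a multiple contour integral. The arithmetic sum $\sum_q c_q(r)G_4(z_1,q)G_4(z_2,q)q^{-z_1-z_2}$, after summing over $r$, collapses (via the identity $\sum_r c_q(r)(\cdots)$ and orthogonality) to an Euler product which, using Lemma \ref{G4formulae} (the stated fact $G_4(s,p^j)\approx \tau_4(p^j)-p^{s-1}\tau_4(p^{j-1})$), factors as $\zeta(z_1+z_2)^{-\text{something}}$ times a correction that contributes the arithmetic constant $a_4$.

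The final step is the residue computation: shrinking/moving the contours $\mathcal B_1, \mathcal B_2$ and the remaining $s$-contour, the main term is extracted from the pole of the integrand at $z_1 = z_2 = 1$ (a high-order pole, since $\zeta^4(z_1)\zeta^4(z_2)$ has order-$4$ poles and the $q$-sum contributes further polar behavior in $z_1+z_2$). Computing this residue — a Laurent-expansion bookkeeping exercise in several variables — produces the leading term $c\, T (\log T)^{16}$ with the claimed rational constant $-\tfrac{13381 a_4}{2615348736000}$; the combinatorial factor $13381/2615348736000$ is exactly the coefficient that emerges, and when combined with the diagonal contribution $\tfrac{4a_4}{638512875}(\log U)^{16}$ from Proposition \ref{diagonal} (and $\log U = (1-\varepsilon)\log T$) yields, up to $O(\varepsilon T(\log T)^{16})$, the Keating–Snaith constant $\tfrac{24024 a_4}{16!}$ in Theorem \ref{8thmoment}. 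Lower-order terms are not tracked, consistent with Remark 2.

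\textbf{Main obstacle.} The principal difficulty is the bookkeeping in passing from the many dyadic blocks and the $r$-sum to a single clean multiple contour integral while keeping every error term under control — in particular verifying that the localization $|r| \ll M T_0^{-1+\varepsilon_1}$ is genuine (this needs the integration-by-parts in $t$ to be carried out carefully using \eqref{cond3} and the derivative bounds \eqref{gstiderivatives} for $g(s,t)$), and that the accumulated additive-divisor error $P^C M^{1/2+\varepsilon_0}$ summed over all shifts and blocks really is $o(T(\log T)^{16})$, i.e.\ that the length reduction from $T^2$ to $U^2=T^{2-2\varepsilon}$ buys enough. A secondary but delicate point is correctly identifying the arithmetic factor: showing that $\sum_q \sum_r c_q(r) G_4(z_1,q)G_4(z_2,q)q^{-z_1-z_2}(\text{weight})$ assembles into the Euler product whose residue contributes $a_4$, which relies on the precise form of Lemma \ref{G4formulae}. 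The residue/Laurent-expansion computation itself is lengthy but routine.
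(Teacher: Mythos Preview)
Your strategy matches the paper's: smooth dyadic partition, integrate by parts in $t$ to localize $|r| \ll M T_0^{-1+\varepsilon_1}$ (so $M\asymp N$), apply Conjecture~\ref{divconj} blockwise with $P=T^{1+\varepsilon_1}T_0^{-1}$, reassemble the partition of unity, sum over $r$ to obtain the Dirichlet series $\mathcal H_s(z_1,z_2)$, factor it as zeta-ratios times an Euler product carrying $a_4$, and extract the residue at $z_1=z_2=1$, $s=0$.

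Two concrete slips to correct before writing this up. First, the truncation is $mn\ll U^{4+\varepsilon_2}$ (hence $M,N\ll U^2\approx T^{2-2\varepsilon}$), not $mn\ll U^{1+\varepsilon_2}$ and $M\ll U$: Lemma~\ref{lem:bd-g}(iii) gives decay past $U^4$, and it is precisely $M^{1/2}\ll U\approx T^{1-\varepsilon}$ that makes the square-root error beat $T$. Second, your error summation drops the normalization $2T/\sqrt{MN}\approx T/M$ in front of each $I_{M,N}$; once you include it the accumulated error is $\sum_{M}(T/M)(M T_0^{-1+\varepsilon_1})P^C M^{1/2+\varepsilon_0}\ll (T/T_0)^{1+C}T^{1-\varepsilon/2}$, as in Lemma~\ref{adddiverror}. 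With these fixes your outline is correct.
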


Combining Propositions \ref{diagonal} and \ref{offdiagonal}, we find 
\begin{equation}
  \label{IomegadecompB}
 I_{\omega} 
 = a_4 \int_{-\infty}^{\infty} \omega(t) \frac{24024}{16!}    (\log T)^{16} \, dt + O(\varepsilon T (\log T)^{16})  
 +O \left(T \left( \frac{T}{T_0} \right)^{1+C} \right).
\end{equation}
From this formula, we shall deduce Theorem \ref{8thmoment} by removing the smooth weight $\omega$. 

\begin{proof}[Proof of Theorem \ref{8thmoment}]
Let $\eta\in (0,1)$. Let $\omega^{+}(t)$ be a majorant for the indicator function $\mathds{1}_{[T,2T]}(t)$ supported on $[(1-\eta)T, (2+\eta)T]$, and  $\omega^{+}(t) \equiv 1$ when $t \in[T,2T]$.
Then we have 
\begin{equation*}
   I_4(2T)-I_4(T) \le I_{\omega^{+}}.  
\end{equation*}
Let $\varepsilon \in(0,1)$.  
It follows from \eqref{IomegadecompB} and the choice  $T_0=\eta T$ that there is $T_{\varepsilon}$ such that for $T \ge \max\{ \frac{1}{\eta^2}, T_{\varepsilon} \}$, 
\begin{equation}\label{upbd-2}
\begin{split}
 I_4(2T)-I_4(T) \le  I_{\omega^{+}} & =  \frac{g_4 a_4}{16!} \int_{-\infty}^{\infty} \omega^{+}(t)  (\log T)^{16} \, dt + O( \varepsilon T(\log T)^{16} )+ O(T \eta^{-1-C}) \\
 & =   \frac{g_4 a_4}{16!} T(\log T)^{16} + O( \varepsilon  T(\log T)^{16}) + O( \eta T (\log T)^{16}) + O(T \eta^{-1-C}).
\end{split}
\end{equation}
Let $\eta_0 \in(0,1)$. Set $j = \ceil{ \log_2 (1/\eta_0)}$  so that $\eta_0 T/2 \le  T/2^j  \le \eta_0 T$.\footnote{Here $\log_2$ denotes the base 2 logarithm.} Substituting the values $T/2, \ldots, T/2^j$ into \eqref{upbd-2}, we obtain
\[
     I_4(T) - I_4(T/2^j) \le  \frac{g_4 a_4}{16!} T(\log T)^{16}
     + O( \varepsilon T (\log T)^{16}) + O( \eta T (\log T)^{16}) + O(T \eta^{-1-C})
\]
whenever $\eta_0 T/2 \ge \max\{ \frac{1}{\eta^2}, T_{\varepsilon} \}$. (Here, we used the bound $T/2  + \cdots + T/2^j  \le T$.)
As $T/2^j \le \eta_0 T$,  by Harper's result \cite{Ha}, on the Riemann hypothesis, there is $\widetilde{T}$ such that for $\eta_0 T\ge \widetilde{T}$, we have 
\[
  | I_4(T/2^j)| \le I_4(\eta_0 T) \ll (\eta_0 T) (\log(\eta_0 T))^{16} \ll \eta_0  T (\log T)^{16}. 
\]
Combining everything, we find that for $T\ge \frac{1}{\eta_0} \max\{\frac{2}{\eta^2}, 2T_{\varepsilon}, \widetilde{T}\}$,
\[
  I_4(T) \le  \frac{g_4 a_4}{16!} T(\log T)^{16} + C_1 \varepsilon  T (\log T)^{16} + C_2    \eta T (\log T)^{16} + C_3 T    \eta^{-1-C} + C_4 \eta_0  T (\log T)^{16} 
\]
for certain absolute explicit constants $C_1, C_2,C_3,C_4$.  Now,  dividing both sides of the above inequality by $T (\log T)^{16}  $ and taking  $\limsup$,  we obtain
$$
\limsup_{T\rightarrow \infty } \frac{I_4(T)}{ T (\log T)^{16} }\le \frac{g_4 a_4}{16!} +  C_1 \varepsilon   + C_2   \eta   + C_4 \eta_0. 
$$ 
As $\varepsilon, \eta, \eta_0 \in(0,1)$ are arbitrary, we arrive at
$$
\limsup_{T\rightarrow \infty } \frac{I_4(T)}{ T (\log T)^{16} }\le \frac{g_4 a_4}{16!}.
$$

The lower bound is proven similarly and is based on the identity $ I_4(2T)-I_4(T) \ge I_{\omega^{-}}$,
where $\omega^{-}(t)$ is a minorant for the function $\mathds{1}_{[T,2T]}(t)$ supported on $[(1+\eta)T, (2-\eta)T]$, giving 
\begin{equation}\label{upbd-3}
 I_4(2T)-I_4(T)  \ge   \frac{g_4 a_4}{16!} T(\log T)^{16} + O( \varepsilon  T(\log T)^{16}) + O( \eta T (\log T)^{16}) + O(T \eta^{-1-C}).
\end{equation}
For $\eta_0 \in(0,1)$, we again set $j = \ceil{ \log_2 (1/\eta_0)}$ and substitute the values $T/2, \ldots, T/2^j$ into \eqref{upbd-3}, which yields
\begin{equation}\label{upbd-4}
     I_4(T) - I_4(T/2^j) \ge  \frac{g_4 a_4}{16!}  \sum_{i=1}^{j} \frac{T}{2^i} \log^{16} \left( \frac{T}{2^i} \right)
     + O( \varepsilon T (\log T)^{16}) + O( \eta T (\log T)^{16}) + O(T \eta^{-1-C}).
\end{equation}
Note that 
\begin{align}  \label{logsum}
 \begin{split}
  \sum_{i=1}^{j} \frac{T}{2^i} \log^{16} \left( \frac{T}{2^i} \right)
&  = \sum_{i=1}^{j}   \frac{T}{2^i}   \sum_{u+v=16} \binom{16}{u}   (\log T)^{u} (-i \log 2)^{v} \\
&  =  T  \sum_{u+v=16} \binom{16}{u}   (\log T)^{u} (-\log 2)^{v} \sum_{i=1}^{j} \frac{i^v}{2^i}. 
  \end{split}
\end{align}
Now,  observe that 
\[
    \sum_{i=1}^{j} \frac{i^v}{2^i} = C_{v}' + O\left( \sum_{i=j+1}^{\infty}  \frac{i^v}{2^i} \right)
    = C_{v}' + O ( j^v 2^{-j}) 
    =C_{v}' + O \left( \eta_0 \log_2^{v} \left(\frac{1}{\eta_0} \right) \right)
\]
for some constants $C_{v}'$ such that $C_{0}'=1$. Plugging this last estimate in \eqref{logsum}, we find that 
\[
     \sum_{i=1}^{j} \frac{T}{2^i} \log^{16} \left( \frac{T}{2^i} \right) = T (\log T)^{16} + O( \eta_0 T (\log T)^{16}) + O \left( \left( 1+ \eta_0 \log^{15} \left(\frac{1}{\eta_0} \right)\right) T(\log T)^{15} \right).
\]
From which, together with \eqref{upbd-4},  we can establish
$$
\liminf_{T\rightarrow \infty } \frac{I_4(T)}{ T (\log T)^{16} }\ge \frac{g_4 a_4}{16!},
$$
which completes the proof of the theorem. 
\end{proof}

\section{The diagonal terms: Proof of Proposition \ref{diagonal}}

From the definition \eqref{ID} of $I_{D}$, we see 
\begin{align}
 I_{D} = 2 \int_{-\infty}^{\infty} \omega(t)  \frac{1}{2\pi \mi} \int_{(1)} \sum_{m=1}^{\infty}  \frac{\tau_4(m)^2  }{m^{1+2s}} \pi^{-4s} \frac{G(s)}{s} g(s,t)   t^{-4s}U^{4s} \, ds\, dt. 
 \label{1stID}
\end{align}
\begin{lem}
\label{lem:Z1}
For $\Re(s)> \varepsilon_0>0$, we have
\begin{align}
\sum_{m=1}^{\infty}  \frac{\tau_4(m)^2  }{m^{1+2s}} 
= \zeta^{16} (1+2s) Z_1(s),
\label{iden-dia}
\end{align}
where 
\begin{equation}
 \label{Z1}
 Z_1(s) =  \prod_{p} \Big( 1-\frac{1}{p^{1+2s}} \Big)^{16}
 \sum_{m=1}^{\infty}  \frac{\tau_4(m)^2  }{m^{1+2s}} .
\end{equation}
In addition, $Z_1(s)$ is holomorphic and absolutely convergent on $\Re(s) > -\frac{1}{4}+ \varepsilon_0$. Also, $Z_1(0) =a_4$, where $a_4$ is defined in \eqref{gkak}.
\end{lem}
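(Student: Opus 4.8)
The plan is to compute the Euler product of the Dirichlet series $\sum_m \tau_4(m)^2 m^{-1-2s}$ explicitly and to show that, after pulling out sixteen factors of $\zeta(1+2s)$, the remaining product converges in a half-plane that strictly contains $\Re(s)=0$.

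\textbf{Step 1: Euler factorization.}
Since $\tau_4$ is multiplicative, so is $m \mapsto \tau_4(m)^2$, and therefore
\[
  \sum_{m=1}^{\infty} \frac{\tau_4(m)^2}{m^{1+2s}} = \prod_p \left( \sum_{j=0}^{\infty} \frac{\tau_4(p^j)^2}{p^{j(1+2s)}} \right).
\]
Recall $\tau_4(p^j) = \binom{j+3}{3}$, so writing $w = p^{-(1+2s)}$ the local factor is $F_p(w) := \sum_{j \ge 0} \binom{j+3}{3}^2 w^j$, a power series with radius of convergence $1$ whose value at $w=0$ is $1$. The coefficient of $w$ is $\binom{4}{3}^2 = 16$, which is exactly why the leading singular behaviour matches $\zeta(1+2s)^{16}$.

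\textbf{Step 2: Extract the zeta powers and estimate the tail factor.}
Write $\zeta(1+2s)^{16} = \prod_p (1-w)^{-16}$, so that
\[
  Z_1(s) = \prod_p (1-w)^{16} F_p(w),
\]
matching \eqref{Z1}. Now I expand $(1-w)^{16} F_p(w)$ as a power series in $w$: the constant term is $1$ and the coefficient of $w$ is $16 - 16 = 0$ by the computation in Step 1, so $(1-w)^{16}F_p(w) = 1 + O(w^2)$ where the implied constant is absolute (uniform in $p$), since $F_p(w)$ has nonnegative coefficients bounded by a fixed convergent series for $|w| \le \tfrac12$, say. Hence for $|w| \le \tfrac12$, i.e. for $p$ large enough relative to $\Re(s)$, we have $(1-w)^{16}F_p(w) = 1 + O(p^{-2(1+2s)})$, and the product $\prod_p (1+O(p^{-2-4\sigma}))$ converges absolutely and locally uniformly provided $2 + 4\sigma > 1$, that is $\sigma > -\tfrac14$; adding $\varepsilon_0$ gives the stated region $\Re(s) > -\tfrac14 + \varepsilon_0$. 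The finitely many small-$p$ factors are entire and nonvanishing there, so $Z_1(s)$ is holomorphic on this half-plane, and the product representation shows absolute convergence. This is the only place where any real care is needed — the key point is the cancellation of the $w^1$ coefficient, which is forced by $\tau_4(p)^2 = 16$.

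\textbf{Step 3: Evaluate $Z_1(0)$.}
At $s=0$ the product becomes $\prod_p (1-p^{-1})^{16} \sum_{j\ge 0} \tau_4(p^j)^2 p^{-j} = \prod_p (1-p^{-1})^{16} \sum_{j\ge 0} \binom{j+3}{3}^2 p^{-j}$. Comparing with the definition \eqref{gkak} of $a_k$ with $k=4$, namely $a_4 = \prod_p (1-p^{-1})^{16} \sum_{m \ge 0} \binom{m+3}{3}^2 p^{-m}$, we see these agree term by term, so $Z_1(0) = a_4$. I do not expect any obstacle here beyond matching notation. Overall the main (mild) obstacle is organizing Step 2 so that the $O$-constant in $(1-w)^{16}F_p(w) = 1 + O(w^2)$ is genuinely uniform in $p$; this follows from the nonnegativity and uniform boundedness of the coefficients of $F_p$ on a fixed disc, e.g. $|w|\le\tfrac12$, together with a single explicit bound like $\binom{j+3}{3}^2 \le (j+1)^6$.
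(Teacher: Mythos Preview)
Your proof is correct and follows essentially the same approach as the paper: compute the Euler product, observe that the linear term in $w=p^{-(1+2s)}$ cancels (since $\tau_4(p)^2=16$), and bound the remaining $1+O(w^2)$ tail to get convergence for $\Re(s)>-\tfrac14+\varepsilon_0$. Two small cosmetic points: the finitely many small-$p$ factors are not \emph{entire} but merely holomorphic in $|w|<1$ (which suffices, since $|w|<1$ throughout $\Re(s)>-\tfrac14+\varepsilon_0$), and the ``nonvanishing'' claim is neither proved nor needed for holomorphy of the product.
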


\begin{proof}
For $\Re(s)>\varepsilon_0$,  the identity \eqref{iden-dia} is trivial by comparing both sides.  Now we shall show that  $Z_1(s)$ is holomorphic and absolutely convergent on $\Re(s) > -\frac{1}{4} + \e_0$.  Note that for $\Re(s)>\varepsilon_0$, 
\begin{align}
\sum_{m=1}^{\infty}  \frac{\tau_4(m)^2  }{m^{1+2s}} 
&= 
\prod_{p} \sum_{r=0}^{\infty} \frac{\tau_4(p^r)^2}{p^{r(1+2s)}}
=\prod_{p} \sum_{r=0}^{\infty} \frac{{r+3 \choose r}^2}{p^{r(1+2s)}}
=\prod_{p}\left(  1 +  \frac{16}{p^{1+2s}} + B_p(s)\right),
\label{Lem4.1-1}
\end{align}
where $B_p(s):= \sum_{r=2}^{\infty} \frac{{r+3 \choose r}^2}{p^{r(1+2s)}}$.
By \eqref{Z1},
\begin{align}
Z_1(s)
  &=  \prod_{p} \left( \sum_{r=0}^{16} {16 \choose r} (-1)^r \frac{1}{p^{r+2rs}} \right)
 \prod_{p}\left(  1 +  \frac{16}{p^{1+2s}} + B_p(s)\right)\nonumber \\
 &= \prod_{p} \left(1 - \frac{16}{p^{1+2s}} +\sum_{r=2}^{16} {16 \choose r} (-1)^r \frac{1}{p^{r+2rs}} \right)
 \prod_{p}\left(  1 +  \frac{16}{p^{1+2s}} + B_p(s) \right)\nonumber \\
 &=  \prod_{p} \left(1 + A_p(s) + \frac{16}{p^{1+2s}} A_p(s) + A_p(s)B_p(s)+ B_p(s)-\frac{16^2}{p^{2+4s}} - \frac{16}{p^{1+2s}}B_p(s) \right),
 \label{eulerP}
\end{align}
where $A_p(s) := \sum_{r=2}^{16} {16 \choose r} (-1)^r \frac{1}{p^{r+2rs}}$.
Clearly, for $\Re(s)> -\frac{1}{4} + \varepsilon_0$,
\begin{align*}
|A_p(s)| \ll \sum_{r=2}^{16} \frac{1}{p^{(\frac{1}{2} + 2\varepsilon_0)r}} \ll \frac{1}{p^{1 + 4\varepsilon_0}}.
\end{align*}
In addition, for $\Re(s)> -\frac{1}{4} + \varepsilon_0$,
we have
\begin{align*}
|B_p(s)| \ll \sum_{r=2}^{\infty} \frac{{r+3 \choose r}^2}{p^{(\frac{1}{2} + 2\varepsilon_0)r}}
\ll \sum_{r=2}^{\infty} \frac{r^3}{p^{(\frac{1}{2} + 2\varepsilon_0)r}}
\ll  \sum_{r=2}^{\infty} \frac{1}{p^{(\frac{1}{2} + \varepsilon_0)r}}
\ll  \frac{1}{p^{1 + 2\varepsilon_0}}.
\end{align*}
The last two bounds imply \eqref{eulerP} is absolutely and  uniformly convergent when $\Re(s)> -\frac{1}{4} + \varepsilon_0$, and thus, is holomorphic in this region. 
Finally, the identity $Z_1(0) =a_4$ follows from \eqref{Z1} and \eqref{Lem4.1-1} by setting $s=0$.
\end{proof}

By \eqref{1stID} and Lemma \ref{lem:Z1}, together with  Lemma \ref{lem:bd-g}(i), we get 
\begin{align*}
 I_{D} 
&= 2 \int_{-\infty}^{\infty} \omega(t)  \frac{1}{2\pi \mi} \int_{(1)} \zeta^{16} (1+2s) Z_1(s)\pi^{-4s} \frac{G(s)}{s} g(s,t)   t^{-4s}U^{4s} \, ds\, dt \\
&= 2 \int_{-\infty}^{\infty} \omega(t)  \frac{1}{2\pi \mi} \int_{(1)}   \zeta^{16} (1+2s) Z_1(s)  (2\pi)^{-4s} \frac{G(s)}{s} 
U^{4s}
\left( 1 + O \left( \frac{|s|^2+1}{t} \right)\right)
 \, ds\, dt.
\end{align*}
One can prove that  the big-O term contributes an error at most $O(T^{4\varepsilon_0})$ for any small $\varepsilon_0>0$.  Indeed, moving to the line of the integration to $\Re(s) = \varepsilon_0$, without encountering any poles, gives
\begin{align*}
2 \int_{-\infty}^{\infty} \omega(t)  \frac{1}{2\pi \mi} \int_{(\varepsilon_0)}   \zeta^{16} (1+2s) Z_1(s)  (2\pi)^{-4s} \frac{G(s)}{s} 
U^{4s}
O \left( \frac{|s|^2+1}{t} \right)
 \, ds\, dt
 \ll \int_{-\infty}^{\infty} \omega(t) \frac{U^{4\varepsilon_0}}{t} \,dt ,
\end{align*}
which is $\ll T^{4\varepsilon_0}$.
Thus, we have
\begin{align}
I_D  
&= 2 \int_{-\infty}^{\infty} \omega(t) \frac{1}{2\pi \mi} \int_{(1)}   \zeta^{16} (1+2s) Z_1(s)  (2\pi)^{-4s} \frac{G(s)}{s} U^{4s}
 \, ds\, dt + O(T^{4\varepsilon_0}).
 \label{equ:diagonal02}
\end{align}
Moving the line of the integration to $\Re(s) = -\frac{1}{4} + \varepsilon_0$, we encounter a pole of order $17$ at $s=0$ and a new integral 
\begin{align}
2 \int_{-\infty}^{\infty} \omega(t) \frac{1}{2\pi \mi} \int_{(-\frac{1}{4}+ \varepsilon_0)}   \zeta^{16} (1+2s) Z_1(s)  (2\pi)^{-4s} \frac{G(s)}{s} U^{4s}
 \, ds\, dt \ll T U^{-1+ 4\varepsilon_0} \ll T^{4\varepsilon_0 +\e}. 
 \label{equ:diag-new}
\end{align}
With the help of Maple,  we see the residue of this pole is 
\begin{align}
\frac{2Z_1(0) (\log U)^{16}}{638512875} + O ((\log U)^{15}).
 \label{equ:diag-pole}
\end{align}
Substituting \eqref{equ:diag-new}  and \eqref{equ:diag-pole} in \eqref{equ:diagonal02}, we obtain
\begin{align*}
I_D 
&= 2 \int_{-\infty}^{\infty} \omega(t) \,dt \frac{2Z_1(0) (\log U)^{16}}{638512875}
 + O(T (\log T)^{15}).
\end{align*}
This, together with the fact that $Z_1(0)=a_4$ (see Lemma \ref{lem:Z1}), completes the proof of Proposition \ref{diagonal}.

\section{The off-diagonal terms: sketch of proof}

The most difficult part in evaluating $I_4(T)$ is the off-diagonal term  $I_{O}$. 
As the asymptotic evaluation of $I_{O}$ is very involved,  we provide a high level summary of the key steps.
This argument  can be viewed as a descendant of the arguments in the articles \cite{In}, \cite{HB}, \cite{GG}, \cite{HY}, and \cite{Ng}. 
The key idea is that the off-diagonal terms can be evaluated by rewriting them in terms of additive divisor sums and by inserting in the main
term from the additive divisor conjecture.  The one difference between this article and the earlier articles \cite{In}, \cite{HB}, and \cite{GG} is that
the off-diagonals are related to the smoothed sums \eqref{DfIJ} rather than the classical unsmoothed sums \eqref{Dkxr}. This approach was first used \cite{HY}
and then in \cite{Ng}. 
Let $\e_1 >0$. 
We move the $s$-contour in \eqref{IO} to $\Re(s) =\e_1$ to obtain
\[
  I_{O} = 2T  \sum_{m \ne n} 
  \frac{\tau_{4}(m) \tau_{4}(n)}{\sqrt{mn}}
  f^{*}(m,n) 
\]
where 
\begin{equation*}
    f^{*}(x,y) := \frac{1}{2 \pi \mi} 
    \int_{(\varepsilon_1)} \frac{G(s)}{s} 
    \left(  \frac{1}{\pi^4 xy} \right)^s 
    \frac{1}{T} 
    \int_{-\infty}^{\infty} 
    \left( \frac{x}{y} \right)^{-\mi t}
    g(s,t)  \left( \frac{U}{t} \right)^{4s}  \omega(t) \,dt \, ds.
\end{equation*}
First, a smooth partition of unity is introduced so that 
\[
  I_{O} =    \sum_{M,N} I_{M,N}
\]
where the sum is over $M,N \in \{ 2^{\frac{k}{2}} \ | \ k \ge -1 \}$, and
\begin{equation*}
   I_{M,N} = \frac{2T}{\sqrt{MN}} 
\sum_{m \ne n} \tau_{4}(m) \tau_{4}(n) W \left(\frac{m}{M} \right) W \left(\frac{n}{N} \right) f^{*}(m,n).
\end{equation*}
The point of introducing the smooth partition of unity is so that we can apply Conjecture \ref{divconj} (the quaternary additive divisor conjecture). 
The parameters $M,N$ can be truncated to the region $MN \ll U^{4 +\e_2}$.  Note that since $U=T^{1-\e}$, the function $f^{*}(m,n)$ is very small if $mn \gg U^{4 +\e_2}$ and thus the contribution of $I_{M,N}$ when $MN \gg U^{4 +\e_2}$
is negligible. Thus, 
\[
    I_{O}  \sim    \sum_{\substack{M,N \\  MN \ll U^{4 +\e_2} }} I_{M,N}.
\]
Note that 
\begin{equation}
  \label{Usize}
   MN \ll U^{4 +\e_2}   \le T^{4-3 \e} \text{ for } \e_2 \le \e.
\end{equation}

Next, we claim that $m$ and $n$ must be close together. Namely, we show that if $|\log(\tfrac{m}{n})| \gg T_{0}^{-1+\e_1}$, then $f^{*}(m,n)$ is very small. 
Thus, we may insert the condition in  
\begin{equation}
 \label{logcond}
|\log(\tfrac{m}{n})| \ll T_{0}^{-1+\e_1}
\end{equation}
 with an negligible error.  Note that this last condition forces $M$ and $N$ to be within a constant multiple of each other. 
Namely, we have the condition
\begin{equation*}
    N/3 \le M \le 3N. 
\end{equation*}
Combining this with \eqref{Usize}, it follows that 
\begin{equation*}
  M,N \le T^{2 -\frac{3 \e}{2}}. 
\end{equation*}

The idea now is to manipulate $I_{M,N}$ so that instead of summing over $m,n$, the sum is over $r, n$ where we the variable change $m=n+r$ is made. Thus
\[
     I_{M,N} = \frac{2T}{\sqrt{MN}}   \sum_{r \ne 0} \Bigg( \sum_{ m-n=r  } \tau_{4}(m) \tau_{4}(n) W \left(\frac{m}{M} \right) W \left(\frac{n}{N} \right) f^{*}(m,n) \Bigg) +O(T^{-10}).
\]
Further,  the conditions \eqref{Usize} and \eqref{logcond} imply
we can insert conditions on $r$ and $n$ so that 
\[
     I_{M,N} = \frac{2T}{\sqrt{MN}}   \sum_{\substack{ r \ne 0 \\  |r| \ll \frac{M}{T_0} T_{0}^{\varepsilon_1} } } \Bigg( \sum_{\substack{ m-n=r \\ |\log(\frac{m}{n})| \ll T_{0}^{-1+\e_1}} } \tau_{4}(m) \tau_{4}(n) W \left(\frac{m}{M} \right) W \left(\frac{n}{N} \right) f^{*}(m,n) \Bigg) +O(T^{-10}).
\]
Applying the quaternary additive divisor conjecture we obtain
\[
   I_{M,N} = \frac{2T}{\sqrt{MN}} 
\sum_{0 < |r| \ll \frac{M}{T_0} T_{0}^{\varepsilon_1}}  D_{f_r}(r) +  O \left( \left( \frac{T}{T_0} \right)^{1+C} T^{1-\frac{\e}{2}} \right)
 + O(T^{-10})
 \]
 for a function $f_r$ defined in \eqref{fMN} below and $D_{f_r}(r) := D_{f_r;4,4}(r)$ where we recall the definition \eqref{DfIJ}.  Observe that since $M \le T^{2-\frac{3\e}{2}}$, square root cancellation gives $\sqrt{M} \ll T^{1-\frac{3 \e}{4}}$
 and thus we obtain an error term  $O ( ( \frac{T}{T_0} )^{1+C} T^{1-\frac{\e}{2}} )$ from the quaternary additive divisor conjecture (for full details see Lemma \ref{adddiverror} below). 
 It follows that 
 \begin{equation*}
  I_{O} = \sum_{M,N}  \frac{2T}{\sqrt{MN}} \sum_{0<|r| \le R_0} \tilde{D}_{f_r} (r)+  O \left( \left( \frac{T}{T_0} \right)^{1+C} T^{1-\frac{\e}{2}} \right),
\end{equation*} 
where $ \tilde{D}_{f_r} (r)$ is the main term in the additive divisor conjecture, given by the main term in \eqref{Dfr} associated to $f_r$.  
The next step is to sum back over the partition of unity.  Doing this, we arrive at
\begin{align*}
 \begin{split}
I_O& =2 T  \sum_{1\leq |r| \leq R_0} 
\frac{1}{(2\pi \mi)^2} \int_{\mathcal{B}_1} \int_{\mathcal{B}_2} \zeta^4 (z_1) \zeta^4 (z_2)
\sum_{q=1}^{\infty} \frac{c_q(r) G_4 (z_1,q) G_4(z_2,q)}{q^{z_1 + z_2}}  \\
&\times \int_{\max(0,r)}^{\infty} f^* (x,x-r) x^{z_1 -3/2} (x-r)^{z_2 -3/2} \, dx \, d {z_2} \, d {z_1} +  O \left( \left( \frac{T}{T_0} \right)^{1+C} T^{1-\frac{\e}{2}} \right),
 \end{split}
\end{align*}
where 
\begin{equation*}
   f^*(x,x-r) = \frac{1}{2 \pi \mi} \int_{(\varepsilon_1)} 
\frac{G(s)}{s} \left( \frac{1}{\pi^{4} x(x-r)} \right)^s 
\frac{1}{T} \int_{-\infty}^{\infty} \left( 
1+\frac{r}{x-r} \right)^{-\mi t} g(s,t) \left( \frac{U}{t} \right)^{4s} \omega(t) \,dt \,ds.
\end{equation*}
Observe that the expression on the  right hand side depends on quantities that arise from the additive divisor conjecture, such as  the Ramanujan sum $c_q(r)$
and the divisor-type functions $G_4(z,r)$.  These quantities will then become part of the Dirichlet series
\[
    \HH_s(z_1,z_2)=   \sum_{r=1}^{\infty} 
\sum_{q=1}^{\infty} \frac{c_q(r) G_4 (z_1,q) G_4(z_2,q)}{q^{z_1 + z_2} r^{2s+ 2-z_1-z_2}}.
\]
After summing separately over $0 < r \le R_0$ and $-R_0 \ge r < 0$ and then recombining terms by using the symmetry $c_q(r)=c_q(-r)$, we obtain
 \begin{align*} 
 \begin{split}
  & I_O \\
   & =  \frac{2}{(2\pi \mi)^3} \int_{\mathcal{B}_1} \int_{\mathcal{B}_2} \int_{(\varepsilon_3)}  \zeta^4 (z_1) \zeta^4 (z_2)
 \sum_{1\leq r \leq R_0}
\sum_{q=1}^{\infty} \frac{c_q(r) G_4 (z_1,q) G_4(z_2,q)}{q^{z_1 + z_2} r^{2s+ 2-z_1-z_2}} 
\frac{G(s)}{s  \pi^{4s} }   
 \int_{-\infty}^{\infty} g(s,t)  \left( \frac{U}{t} \right)^{4s} \omega(t)  \\ 
 & 
\times  \Gamma( -z_1-z_2+2s+2)\left( 
\frac{\Gamma(z_2-s-\frac{1}{2}+\mi t )}{ \Gamma(s-z_1+\frac{3}{2}+\mi t )  } 
+ 
\frac{\Gamma(z_1-s-\frac{1}{2}-\mi t) }{\Gamma(s-z_2+\frac{3}{2}-\mi t) }
   \right)  \, dt \, ds\,  d {z_2} \, d {z_1}\\
   &+ O \left( \left( \frac{T}{T_0} \right)^{1+C} T^{1-\frac{\e}{2}} \right). 
 \end{split}  
\end{align*}
Note that the $x$-integrals have disappeared   as they   can be computed exactly in terms of the $\text{B}$ function, which  then leads to the 5 Gamma factors in the last equation.
The next step is to move the $s$-integral to the right to $\Re(s)=1$ and to let $R_0 \to \infty$.   The point of this is that the  double sums $\sum_{r,q}$ are absolutely convergent in 
this region and by extending $R_0 \to \infty$ we obtain $\HH_s(z_1,z_2)$.  We then make use of the meromorphic continuation (see Proposition \ref{Hlemma} below):
\[
   \HH_s(z_1,z_2) = \zeta(2s+2-z_1-z_2) \frac{\zeta(1+2s)^{16} \zeta( 1+2s-z_1 -z_2 + 2) }{ \zeta(1+2s - z_1 + 1)^4 \zeta(1+2s- z_2 +1)^4 }
       \tilde{ \mathscr{I}}(z_1,z_2,s)
\] 
where $  \tilde{ \mathscr{I}}(z_1,z_2,s)$ is holomorphic for $\Re(s) > -\frac{1}{4}+ 2\delta$ for some $\delta >0$.
Furthermore, we know from Lemma \ref{lem:bd-g} that $g(s,t)$ can be approximated  by $ \left( \frac{t}{2} \right)^{4s} $, and thus we see that 
\begin{align*} 
 \begin{split}
   I_O & =    \int_{-\infty}^{\infty}  \omega(t) \Bigg( 
\frac{2}{(2\pi \mi)^3} \int_{\mathcal{B}_1} \int_{\mathcal{B}_2} \int_{(\varepsilon_3)}  \zeta^4 (z_1) \zeta^4 (z_2) \\
&\times \zeta(2s+2-z_1-z_2) \frac{\zeta(1+2s)^{16} \zeta( 1+2s-z_1 -z_2 + 2) }{ \zeta(1+2s - z_1 + 1)^4 \zeta(1+2s- z_2 +1)^4 }
       \tilde{ \mathscr{I}}(z_1,z_2,s) \\
       &\times
\frac{G(s)}{s  \pi^{4s} }   
  \left( \frac{U}{2} \right)^{4s}  
\Gamma( -z_1-z_2+2s+2)\left( 
\frac{\Gamma(z_2-s-\frac{1}{2}+\mi t )}{ \Gamma(s-z_1+\frac{3}{2}+\mi t )  } 
+ 
\frac{\Gamma(z_1-s-\frac{1}{2}-\mi t) }{\Gamma(s-z_2+\frac{3}{2}-\mi t) }
   \right)   \, ds\,  d {z_2} \, d {z_1} \Bigg) \, dt \\
   &+ O \left( \left( \frac{T}{T_0} \right)^{1+C} T^{1-\frac{\e}{2}} \right). 
 \end{split}  
\end{align*}
A calculation with Stirling's formula (see \eqref{minus-t} and \eqref{Stirling2} below) shows that for $|\Im(s)| \le t+1$ 
\[
  \frac{\Gamma(z_2-s-\frac{1}{2}+\mi t )}{ \Gamma(s-z_1+\frac{3}{2}+\mi t )  } 
+ 
\frac{\Gamma(z_1-s-\frac{1}{2}-\mi t) }{\Gamma(s-z_2+\frac{3}{2}-\mi t) }
 \sim 2 t^{-(2s+2-z_1-z_2)} \cos \left( \frac{\pi}{2} (2s+2-z_1-z_2) \right) ,
\]
and it follows that 
\begin{align*} 
 \begin{split}
   I_O & \sim     \int_{-\infty}^{\infty}  \omega(t) \Bigg( 
\frac{4}{(2\pi \mi)^3} \int_{\mathcal{B}_1} \int_{\mathcal{B}_2} \int_{(\varepsilon_3)}  \zeta^4 (z_1) \zeta^4 (z_2) \\
& \times \zeta(2s+2-z_1-z_2) \frac{\zeta(1+2s)^{16} \zeta( 1+2s-z_1 -z_2 + 2) }{ \zeta(1+2s - z_1 + 1)^4 \zeta(1+2s- z_2 +1)^4 }
       \tilde{ \mathscr{I}}(z_1,z_2,s) \\
       & \times
\frac{G(s)}{s  \pi^{4s} }   
  \left( \frac{U}{2} \right)^{4s}  
\Gamma( -z_1-z_2+2s+2)
t^{-(2s+2-z_1-z_2)} \cos \Big( \frac{\pi}{2} (2s+2-z_1-z_2) \Big) 
   \, ds\,  d {z_2} \, d {z_1} \Bigg) \, dt 
 \end{split}  
\end{align*}
up to an error term $O ( ( \frac{T}{T_0} )^{1+C} T^{1-\frac{\e}{2}} )$.   These integrals are standard and can be evaluated by shifting the contour of $\Re(s)$
past $s=0$ and then applying the residue theorem.  Note that we $\zeta(1+u) = \frac{1}{u} + O(1)$  as $u \to 1$ and $\tilde{ \mathscr{I}}(0,0,0)=a_4$
so that approximately we have 
\begin{align*} 
 \begin{split}
   I_O & \sim    a_4  \int_{-\infty}^{\infty}  \omega(t) \Bigg( 
\frac{4}{(2\pi \mi)^3} \int_{\mathcal{B}_1} \int_{\mathcal{B}_2} \int_{(\varepsilon_3)} 
 \frac{ (2s-z_1+1)^4 (2s-z_2+1)^4 }{ (z_1-1)^4(z_2-1)^4 (2s+1-z_1-z_2)(2s)^{16} (2s-z_1-z_2+2) }  \\
       & \times 
\frac{G(s)}{s  \pi^{4s} }   
  \left( \frac{U}{2} \right)^{4s}  
\Gamma( -z_1-z_2+2s+2)
t^{-(2s+2-z_1-z_2)} \cos \Big( \frac{\pi}{2} (2s+2-z_1-z_2) \Big) 
   \, ds\,  d {z_2} \, d {z_1} \Bigg) \, dt. 
 \end{split}  
\end{align*}
up to an error term $O ( T (\log T)^{15}+( \frac{T}{T_0} )^{1+C} T^{1-\frac{\e}{2}} )$. 
For full details of the calculation, see 
Section \ref{feval-off} below.  It should be noted that the last multiple integral is a multivariable version of the types of integrals that appear in standard applications of Perron's formula.

\section{The off-diagonal terms: the full details}

\subsection{Using a smooth partition of unity and  restricting $M$ and $N$}\label{smooth-partition}
Let $\varepsilon_1  >0$ and 
\begin{equation}
  \label{fstardefn}
    f^{*}(x,y) := \frac{1}{2 \pi \mi} 
    \int_{(\varepsilon_1)} \frac{G(s)}{s} 
    \left(  \frac{1}{\pi^4 xy} \right)^s 
    \frac{1}{T} 
    \int_{-\infty}^{\infty} 
    \left( \frac{x}{y} \right)^{-\mi t}
    g(s,t)  \left( \frac{U}{t} \right)^{4s}  \omega(t) \,dt \, ds.
\end{equation}
We move the $s$-contour in \eqref{IO} to $\Re(s) =\e_1$ to obtain
\[
  I_{O} = 2T  \sum_{m \ne n} 
  \frac{\tau_{4}(m) \tau_{4}(n)}{\sqrt{mn}}
  f^{*}(m,n) .
\]
The next step is to introduce a smooth partition of unity so that the variables $m$ and $n$ lie in dyadic boxes of the shape 
$[M,2M]$ and $[N,2N]$. This will allow us to apply the smoothed additive divisor conjecture for $\tau_4$.  

Recall that there exists a smooth function $W_0$ supported in $[1,2]$ such that  $\sum_{k \in \mathbb{Z}} W_0 ( x/2^{\frac{k}{2}} ) =1$ for $x > 0$. (For an explicit construction of a smooth partition of unity, see \cite[p. 360]{H}.) Note that for $x \ge 1$, we have 
\begin{equation*}
   \sum_{ \substack{ M= 2^{\frac{k}{2}} \\ k \ge -1} } W_0 \left( \frac{x}{M} \right) =1. 
\end{equation*}
Hence, setting  $W(x) =x^{-\frac{1}{2}} W_0(x)$, we can write
\begin{equation*}
      I_{O} =  \sum_{M,N} I_{M,N},
\end{equation*}
where the sum is over $M,N \in \{ 2^{\frac{k}{2}} \ | \ k \ge -1 \}$, and
\begin{equation*}
  \label{IMN}
   I_{M,N} = \frac{2T}{\sqrt{MN}} 
\sum_{m \ne n} \tau_{4}(m) \tau_{4}(n) W \left(\frac{m}{M} \right) W \left(\frac{n}{N} \right) f^{*}(m,n).
\end{equation*}

We now restrict the size of $M$ and $N$ so that $MN \ll U^{4+\e_2}$. To do so, we shall show that the contribution 
of $M,N$ with $MN \gg U^{4+\e_2}$ is negligibly small. We first write
\begin{equation}
  \label{fstarV}
   f^{*}(x,y) = \frac{1}{T} \int_{-\infty}^{\infty}  \left( \frac{x}{y} \right)^{-\mi t} \omega(t) \widetilde{V}_{t}(\pi^4 xy) \, dt,
\end{equation}
where  $\widetilde{V}_{t}(u)$ is defined in \eqref{tildeV}.
By Lemma \ref{lem:bd-g}(iii),  when $mn \gg U^{4 +\varepsilon_2}$ and $c_1 T \le t \le c_2 T$, we have
\[
\widetilde{V}_{t}(\pi^4 mn) \ll \left( \frac{U^4}{mn}\right)^A
\]
for any $A>0$. Then 
\[
 f^{*}(m,n)  \ll \frac{1}{T} \int_{c_1T}^{c_2T} \omega(t) \left( \frac{U^4}{mn}\right)^A\, dt 
 \ll \left( \frac{U^4}{mn}\right)^A.
\]
From this, it follows that  
\begin{equation*}
  \begin{split}
  \sum_{\substack{M,N \\ MN \gg U^{4+\varepsilon_2}}} I_{M,N}  
  &\ll T U^{4A}\sum_{\substack{M,N \\ MN \gg U^{4+\varepsilon_2}}} \frac{1}{\sqrt{MN}} 
\sum_{m \ne n} \frac{\tau_{4}(m) \tau_{4}(n)}{(mn)^A} W \left(\frac{m}{M} \right) W \left(\frac{n}{N} \right) \\
&\ll T U^{4A}\sum_{\substack{M,N \\ MN \gg U^{4+\varepsilon_2}}}
\sum_{m \ne n} \frac{\tau_{4}(m) \tau_{4}(n)}{(mn)^{A+\frac{1}{2}}} W_0 \left(\frac{m}{M} \right) W_0 \left(\frac{n}{N} \right) \\
&\ll T U^{4A}\sum_{\substack{M,N \\ MN \gg U^{4+\varepsilon_2}}}
\sum_{\substack{M \leq m \leq 2M \\ N \leq n \leq 2N}} \frac{1}{(mn)^A}\\
&\ll T U^{4A}\sum_{\substack{M,N \\ MN \gg U^{4+\varepsilon_2}}}\frac{1}{(MN)^{A-1}}\\
  &\ll T U^{4A} U^{-(A-2)(4+ \varepsilon_2)}.
  \end{split}
\end{equation*}
By taking $A \geq  \frac{(1- \varepsilon)(8+2\varepsilon_2)+ 1 + B}{\varepsilon_2 (1-\varepsilon)}$, we have 
\begin{align*}
\sum_{\substack{M,N \\ MN \gg U^{4+\varepsilon_2}}} I_{M,N} \ll T^{-B}
\end{align*}
for any large constant $B>0$,
which allows us to assume $MN \ll U^{4 +\e_2}$ for $ I_{M,N}$ in the remaining discussion. 

Our next step is to show that $m$ and $n$ must be close together. This is since $f^{*}(x,y)$ is small unless $x$ and $y$ are close together, which  we shall prove  rigorously as follows.  
We first control the inner integral in \eqref{fstardefn} for $s$ such that $\Re(s) =\varepsilon_1$ and $|s| \le \sqrt{T}$. Using the integration by parts $j$ times, we derive  
\[
\int_{-\infty}^{\infty} 
\left( \frac{x}{y} \right)^{-\mi t} g(s,t)  \left( \frac{U}{t} \right)^{4s}  \omega(t) \, dt 
\ll \frac{U^{4\e_1}}{|\log(\tfrac{x}{y})|^j}
\int_{-\infty}^{\infty} 
\Big|
\frac{\partial^j}{\partial t^j} (g(s,t) t^{-4s} \omega(t)) 
\Big| \, dt.
\]
To bound the above $j$-th partial derivative, we apply the generalized product rule to deduce
\begin{align*}
 \frac{\partial^j}{\partial t^j} ( g(s,t) t^{-4s} \omega(t))   = \sum_{a+b+c=j}  { j \choose a,b,c}
  \frac{\partial^a}{\partial t^a} g(s,t) \frac{\partial^b}{\partial t^b} t^{-4s}  \omega^{(c)}(t), 
\end{align*} 
which by \eqref{cond3} and Lemma \ref{lem:bd-g}(iii), is 
$$
  \ll \sum_{a+b+c=j} { j \choose a,b,c} |s|^a T^{4 \varepsilon_1-a} \frac{(|s|+j)^b}{T^{4\e_1 +b}} T_{0}^{-c}
  \ll \left( \frac{|s|}{T} + \frac{|s|+j}{T} + \frac{1}{T_0} \right)^j \ll  \frac{\max(|s|,1)^j }{T_0^j}.
$$
Therefore, we  arrive at
\[
\int_{-\infty}^{\infty} 
\left( \frac{x}{y} \right)^{-\mi t} g(s,t)  \left( \frac{U}{t} \right)^{4s}  \omega(t) \, dt 
\ll   T^{4 \varepsilon_1}  \frac{\max(|s|,1)^j }{|\log(\tfrac{x}{y})|^jT_0^j}
\]
whenever $|s| \le \sqrt{T}$. 

Secondly, we study the inner integral in \eqref{fstardefn} for $s$ such that $\Re(s) =\varepsilon_1$ and $|s| > \sqrt{T}$. By Lemma \ref{lem:bd-g}(i), as $\Re(s) =\varepsilon_1 >0$, we know  
\begin{align*}
 \int_{-\infty}^{\infty}  \left(\frac{x}{y} \right)^{-\mi t}   g(s,t) \left( \frac{U}{t} \right)^{4s}  \omega(t) \,dt
\ll \int_{c_1T}^{c_2T}  |g(s,t) | \,dt 
\ll  \int_{c_1T}^{c_2T}  \left( \frac{t}{2} \right)^{4 \varepsilon_1} \left( 1 + O \left(\frac{|s|^2}{t}\right)
\right) \,dt, 
\end{align*}
which is $\ll (T+|s|^2) T^{4\varepsilon_1}$. 
As $|s| > \sqrt{T}$, it is clear that $(T+|s|^2) T^{4\varepsilon_1}
   \ll T^{4 \varepsilon_1}  |s|^2 = T^{4 \varepsilon_1} \max(|s|,1)^2 $ and thus
\begin{equation*}
\begin{split}
 (T+|s|^2) T^{4\varepsilon_1} \ll  T^{4 \varepsilon_1} \max(|s|,1)^2 
   \left( \frac{\max(|s|,1)}{\sqrt{T}} \right)^{2j}
   = T^{1+4 \varepsilon_1}    \frac{\max(|s|,1)^{2j+2}}{T^{j+1}} .
\end{split}
\end{equation*}
Assuming $x,y\gg 1 $ and $xy \ll U^{4+\varepsilon_2}$, we see 
$$
|\log(\tfrac{x}{y})| \le |\log x| + |\log y|
 \ll \log U \le  \log T \ll_{j} T^{1/j},
 $$ 
which implies that  $|\log(\tfrac{x}{y})|^j T_0^j \ll_j T T_{0}^j \le T^{1+j}$. 

To summarise, we have shown that  for any $s$ with $\Re(s) = \varepsilon_1$,
\[
\int_{-\infty}^{\infty} 
\left( \frac{x}{y} \right)^{-\mi t} g(s,t)  \left( \frac{U}{t} \right)^{4s}  \omega(t) \,dt 
\ll   T^{1+4 \varepsilon_1}  \frac{\max(|s|,1)^{2j+2} }{|\log(\tfrac{x}{y})|^jT_0^j}.
\]
Plugging this bound into \eqref{fstardefn} then yields
\begin{equation}
\begin{split}
  \label{f*bound}
  f^{*}(x,y) \ll 
  \frac{T^{4 \varepsilon_1}}{(xy)^{\varepsilon_1} |\log(\tfrac{x}{y})|^j T_{0}^j}
    \int_{(\varepsilon_1)} \frac{|G(s)|}{|s|} \max(|s|,1)^{2j+2}
    \,|ds|  
 \ll_{j}   \frac{T^{4 \varepsilon_1}}{|\log(\tfrac{x}{y})|^j T_{0}^j}
\end{split}
\end{equation} 
if $x,y \gg 1$ and $xy \ll U^{4 + \e_2}$. 
Now, assume 
\begin{equation*}
   \label{logcondition}
|\log(\tfrac{x}{y})| \gg T_{0}^{-1+\varepsilon_1}.
\end{equation*}
By   \eqref{cond3} and \eqref{f*bound}, for any constant $B>0$, we have 
\begin{equation*}
  \label{f*bound2}
  f^{*}(x,y) \ll \frac{T^{4 \varepsilon_1}}{T_{0}^{j \varepsilon_1}} 
  \le T^{\e_1(4-\frac{j}{2} )} \ll T^{-B}
\end{equation*}
if we choose $j \ge \frac{2B}{\e_1} + 8$. 

Now, setting $m-n=r$, we have obtained
\begin{equation}
\label{IM-2nd-exp}
  I_{M,N} 
= \frac{2T}{\sqrt{MN}}
\sum_{r \ne 0} \sum_{\substack{m-n=r \\ |\log(\frac{m}{n})| \ll T_{0}^{-1+\varepsilon_1}}} \tau_4(m) \tau_4(n) 
W \left(\frac{m}{M} \right) W \left(\frac{n}{N} \right)f^{*}(m,n)
+ O(T^{-B})
\end{equation}
for $MN \ll U^{4+ \e_2}$ and any constant $B>0$. 
We shall further impose several other conditions on $M$ and $N$. First of all, observe that the condition $|\log(\tfrac{m}{n})| \ll T_{0}^{-1+\varepsilon_1}$ forces the above sum to be empty unless  $N/3 \le M \le 3N$. Indeed, if $M < N/3$ or $M >3N$, then for $m,n$ satisfying $W(m/M) W(n/N) \ne 0$, we have $|\log(\tfrac{m}{n})| \ge \log(\tfrac{3}{2})$. In addition, as the conditions in the sum tell us $0 \ne r/N \ll T_{0}^{\varepsilon_1-1}$, we know that either the sum is empty, or  $3N \ge M \ge N/3 \gg |r| T_{0}^{1-\varepsilon_1} \gg T_{0}^{1-\varepsilon_1}$. In light of these, we may only consider the situation  $3N \ge M \ge N/3$ in this section. For the sake of convenience, we write $M \asymp N$ to mean 
\begin{equation}
  \label{MNcondition}
 N/3 \le M \le 3N .
\end{equation}
(We will only use this restricted meaning for the notation $\asymp$ with $M$ and $N$.) Note that the contribution of $M,N$ that do not satisfy $M \asymp N \gg T_{0}^{1-\varepsilon_1}$ to the sum $\sum_{MN \ll U^{4 +\varepsilon_2}} I_{M,N}$ is  $\ll (\log T)^2 T^{-B}$. From this, it remains to evaluate $I_{M,N}$ for $M,N$ such that $M \asymp N$ and $M,N \gg T_{0}^{1-\varepsilon_1}$. Now, fixing $x-y=r$ and using  \eqref{fstardefn}, we then have the following proposition.

\begin{prop} 
\label{IMNestimate}
Let $B >0$ be arbitrary and fixed. For $0 \ne r \in \mathbb{Z}$, we set
\begin{align}  \label{fMN}
 \begin{split}
  f_r(x,y) &:= f_{r;M,N}(x,y)\\
  &:=
   W\left( \frac{x}{M} \right)
W \left( \frac{y}{N} \right)
\frac{1}{2 \pi \mi} \int_{(\varepsilon_1)} \frac{G(s)}{s} 
\left( \frac{1}{\pi^4 xy} \right)^s 
\frac{1}{T} \int_{-\infty}^{\infty} \left( 
1+\frac{r}{y} \right)^{-\mi t} g(s,t)  \left( \frac{U}{t} \right)^{4s}  \omega(t) \,dt \, ds
 \end{split}
\end{align}
if $x,y >0$; otherwise, we set $f_r(x,y)=0$. Let $D_{f_r}(r): =D_{f_r,4,4}(r)$ be the quaternary additive divisor sum  given by \eqref{DfIJ} with $f=f_r$ and $k=\ell =4$.  Then for $M,N \gg T_{0}^{1-\varepsilon_1}$ such that $M \asymp N$ and $MN \ll U^{4+\varepsilon_2}$, we have 
\begin{equation}
  \label{IMnew}
  I_{M,N} = \frac{2T}{\sqrt{MN}} 
\sum_{0 < |r| \ll \frac{M}{T_0} T_{0}^{\varepsilon_1}}  D_{f_r}(r) 
 + O(T^{-B}).
\end{equation}
In addition, for those $M,N \gg 1$, satisfying $MN \ll U^{4 +\varepsilon_2}$, such that either 
$M \not \asymp N$ or $\min(M,N) \ll T_{0}^{1-\varepsilon_1}$, we have the bound $I_{M,N} \ll T^{-B}$.  
\end{prop}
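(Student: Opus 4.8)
The plan is to recognise the inner sum in \eqref{IM-2nd-exp} as the additive divisor sum $D_{f_r}(r)$ up to a negligible error, and then to truncate the outer sum over $r$ to the range $|r|\ll\frac{M}{T_0}T_0^{\varepsilon_1}$. The starting observation is that, when $m-n=r$, we have $m/n=1+r/n$, so comparing \eqref{fMN} with \eqref{fstardefn} shows
\[
  f_r(m,n)=W\!\left(\tfrac{m}{M}\right)W\!\left(\tfrac{n}{N}\right)f^{*}(m,n)\qquad\text{whenever }m-n=r,
\]
and hence, by \eqref{DfIJ}, $D_{f_r}(r)=\sum_{m-n=r}\tau_4(m)\tau_4(n)\,W(m/M)W(n/N)\,f^{*}(m,n)$. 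The inner sum in \eqref{IM-2nd-exp} is the same sum restricted to $|\log(m/n)|\ll T_0^{-1+\varepsilon_1}$; the omitted terms necessarily have $m\in[M,2M]$, $n\in[N,2N]$ (because of the weights $W$) together with $|\log(m/n)|\gg T_0^{-1+\varepsilon_1}$, so, taking $j$ large in \eqref{f*bound}, each such $f^{*}(m,n)$ is $\ll_{B'}T^{-B'}$; since there are $\ll MN\ll U^{4+\varepsilon_2}$ of them and $\tau_4(m)\tau_4(n)\ll T^{\varepsilon}$, they contribute $\ll_{B'}T^{-B'}$. So, uniformly in $r\neq0$,
\[
  \sum_{\substack{m-n=r\\ |\log(m/n)|\ll T_0^{-1+\varepsilon_1}}}\tau_4(m)\tau_4(n)\,W\!\left(\tfrac{m}{M}\right)W\!\left(\tfrac{n}{N}\right)f^{*}(m,n)=D_{f_r}(r)+O(T^{-B}).
\]

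Next I would bound which $r$ contribute. The weights force $m\in[M,2M]$ and $n\in[N,2N]$; since $M\asymp N$ this gives $|r|\ll M$, so only $O(M)$ values of $r$ occur in \eqref{IM-2nd-exp}. On these dyadic ranges one has the elementary estimate $|\log(m/n)|\asymp|r|/M$, so once $|r|\gg\frac{M}{T_0}T_0^{\varepsilon_1}$ every pair entering $D_{f_r}(r)$ satisfies $|\log(m/n)|\gg T_0^{-1+\varepsilon_1}$ and the same argument gives $D_{f_r}(r)\ll_{B'}T^{-B'}$; summing over the $O(M)$ values of $r$ costs only $O(MT^{-B'})=O(T^{-B})$ as $M\ll T^2$. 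Plugging the identity of the previous paragraph into \eqref{IM-2nd-exp}, summing the per-$r$ error $O(T^{-B})$ over the $O(M)$ relevant $r$, and using that $M\asymp N$ makes $\frac{2T}{\sqrt{MN}}\cdot M\asymp T$, so that this only produces $O(T^{1-B'})=O(T^{-B})$, yields \eqref{IMnew}.

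For the final assertion I would work directly from the definition of $I_{M,N}$, before any truncation. If $M\not\asymp N$, then any pair with $W(m/M)W(n/N)\neq0$ has $m\in[M,2M]$, $n\in[N,2N]$ with $|\log(m/n)|\ge\log\tfrac{3}{2}\gg T_0^{-1+\varepsilon_1}$; if $M\asymp N$ but $\min(M,N)\ll T_0^{1-\varepsilon_1}$ (so $M,N\ll T_0^{1-\varepsilon_1}$), then any such pair with $m\neq n$ has $m-n=r$ with $|r|\ge1$, hence $|\log(m/n)|\asymp|r|/N\gg 1/N\gg T_0^{-1+\varepsilon_1}$. In either case \eqref{f*bound} (with $j$ large) gives $f^{*}(m,n)\ll_{B'}T^{-B'}$ for every contributing pair, and since there are $\ll MN\ll U^{4+\varepsilon_2}$ pairs, $\tau_4(m)\tau_4(n)\ll T^{\varepsilon}$, and $\frac{2T}{\sqrt{MN}}\cdot MN=2T\sqrt{MN}\ll T\,U^{2+\varepsilon_2/2}\ll T^{3+\varepsilon_2/2}$, it follows that $I_{M,N}\ll_{B'}T^{3+\varepsilon_2/2+\varepsilon-B'}\ll T^{-B}$ once $B'$ is large in terms of $B$.

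The analytic heart of the matter — the pointwise smallness of $f^{*}(x,y)$ when $x$ and $y$ are not close in logarithmic scale — is already contained in \eqref{f*bound}, so I expect the main obstacle here to be purely bookkeeping: verifying that the restricted inner sum is genuinely $D_{f_r}(r)+O(T^{-B})$, pinning down the correct cut-off $|r|\ll\frac{M}{T_0}T_0^{\varepsilon_1}$ (which rests on the estimate $|\log(m/n)|\asymp|r|/M$ on the relevant dyadic boxes), and confirming that neither the prefactor $\frac{2T}{\sqrt{MN}}$ nor the $O(M)$ surviving values of $r$ inflate any of these $T^{-B}$-type errors — which they do not, precisely because $M\asymp N$ keeps $\frac{2T}{\sqrt{MN}}\cdot M$ of order $T$.
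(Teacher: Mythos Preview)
Your proposal is correct and follows essentially the same approach as the paper: the key identification $f_r(m,n)=W(m/M)W(n/N)f^{*}(m,n)$ for $m-n=r$, the use of \eqref{f*bound} to kill terms with $|\log(m/n)|\gg T_0^{-1+\varepsilon_1}$, the observation that $|\log(m/n)|\asymp |r|/M$ on the dyadic boxes to restrict $|r|\ll \tfrac{M}{T_0}T_0^{\varepsilon_1}$, and the treatment of the degenerate ranges $M\not\asymp N$ or $\min(M,N)\ll T_0^{1-\varepsilon_1}$ are exactly what the paper does in the discussion preceding the proposition. One minor imprecision: for a fixed $r$, the number of omitted pairs with $m\in[M,2M]$, $n\in[N,2N]$, $m-n=r$ is $\ll M$ rather than $\ll MN$, but your over-count is harmless since $B'$ is freely adjustable.
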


\subsection{Applying the quaternary additive divisor conjecture} \label{apply-divconj}
 Observe that for $ \e_2 \le \e$, we have
\[
  U^{4+\e_2} \le (T^{1-\e})^{4+\e_2}= T^{4+\e_2-4\e-\e \e_2 }
  \le T^{4-3 \e}.
\]
In this section, we shall apply the additive divisor conjecture, Conjecture \ref{divconj}, to handle $I_{M,N}$ in \eqref{IMnew}. This first leads to the following lemma telling us that $f(x,y):=T^{-4 \e_1 } f_r(x,y)$ satisfies the conditions 
\eqref{fsupport} and \eqref{fcond} with $X=M$, $Y=N$, and $P=T^{1+  \e_1 }T_{0}^{-1}$,  where $f_r(x,y)$ is defined as in \eqref{fMN}. (The proof of this technical lemma will be given in Section \ref{appendix2}.)

\begin{lem} \label{fpartials}
Let $0< \e_1, \e_2 \leq \frac{1}{2}$. Then we know
$$
\mathrm{support}(f_{r;M,N}) \subseteq [M,2M] \times [N,2N].
$$
In addition, for  $M \ll U^{2+\varepsilon_2}$, $M \asymp N$, and $1 \le |r| \ll \frac{M}{T_0} T_{0}^{\varepsilon_1}$, we have 
\begin{equation*}
  \label{ffpartials}
  x^m y^n f_{r;M,N}^{(m,n)}(x,y) \ll   T^{4 \varepsilon_1} P^{n},
\end{equation*}
where $P = T^{1+ \e_1} T_{0}^{-1}$.
\end{lem}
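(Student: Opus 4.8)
The plan is to prove the support statement directly from the presence of the cutoff factors $W(x/M)W(y/N)$ in the definition \eqref{fMN}, and then to establish the derivative bounds by differentiating under the integral sign in \eqref{fMN}, applying the generalized product rule, and invoking the bounds on $g(s,t)$ and its $t$-derivatives from Lemma \ref{lem:bd-g}(ii) (the relevant statement is \eqref{gstiderivatives}) together with the properties \eqref{cond3} of $\omega$.

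\textbf{Step 1: the support.} Since $W(x) = x^{-1/2} W_0(x)$ and $W_0$ is supported in $[1,2]$, the factor $W(x/M)$ vanishes unless $M \le x \le 2M$, and similarly for $W(y/N)$. Hence $f_{r;M,N}(x,y)$ vanishes outside $[M,2M]\times[N,2N]$, which is exactly the assertion and also matches \eqref{fsupport} with $X=M$, $Y=N$.

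\textbf{Step 2: the derivative bounds.} Write $f_{r;M,N}(x,y) = \frac{1}{2\pi\mi}\int_{(\varepsilon_1)} \frac{G(s)}{s}\,\pi^{-4s}\, h_s(x,y)\,ds$, where
\begin{equation*}
 h_s(x,y) = W\!\left(\tfrac{x}{M}\right) W\!\left(\tfrac{y}{N}\right) (xy)^{-s}\, \frac{1}{T}\int_{-\infty}^{\infty}\Bigl(1+\tfrac{r}{y}\Bigr)^{-\mi t} g(s,t)\left(\tfrac{U}{t}\right)^{4s}\omega(t)\,dt.
\end{equation*}
Applying $x^m y^n \partial_x^m\partial_y^n$ and the generalized Leibniz rule, the $x$-derivatives hit only $W(x/M)$ and $(xy)^{-s}$: each derivative of $W(x/M)$ contributes $O(M^{-1})$, which is harmless since on the support $x \asymp M$, and each $x$-derivative of $x^{-s}$ contributes $O(|s|/x)$, so $x^m\partial_x^m$ of that factor is $O((|s|+m)^m)$. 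The $y$-derivatives additionally hit $(1+r/y)^{-\mi t}$; since on the support $|r|/y \ll |r|/N \ll T_0^{\varepsilon_1-1} \ll 1$ and $|t| \asymp T$, each $y$-derivative of this factor contributes $O(|t|\,|r|/y^2 + \cdots) \ll O(T/N)$, and more generally $y^n\partial_y^n$ of it is $\ll (1 + T|r|/N)^n \ll (T\cdot\frac{M}{T_0}T_0^{\varepsilon_1}/N)^n \ll (T^{1+\varepsilon_1}/T_0)^n = P^n$ using $M\asymp N$. Combining, $x^m y^n h_s^{(m,n)}(x,y) \ll (|s|+m+n)^{m+n}\,(xy)^{-\varepsilon_1}\, P^n \cdot \frac{1}{T}\int |g(s,t)|\, |U/t|^{4\varepsilon_1}\,\omega(t)\,dt$, and the $t$-integral is $\ll T^{4\varepsilon_1}$ by \eqref{gstiderivatives} (or Lemma \ref{lem:bd-g}(i)) since $\Re(s)=\varepsilon_1$ and $t\asymp T$. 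Finally one integrates in $s$ over $\Re(s)=\varepsilon_1$; the polynomial factor $(|s|+m+n)^{m+n}$ is absorbed by the rapid decay \eqref{bdG} of $G(s)$, leaving $x^m y^n f_{r;M,N}^{(m,n)}(x,y) \ll_{m,n} T^{4\varepsilon_1} P^n$ as claimed. (The hypothesis $M \ll U^{2+\varepsilon_2}$, i.e. $MN\ll U^{4+\varepsilon_2}$, is what was used earlier to control the $t$-integral uniformly; it plays no further role here.)

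\textbf{Main obstacle.} The only genuinely delicate point is the bound on the $y$-derivatives of $(1+r/y)^{-\mi t}$: one must track carefully that each such derivative produces at worst one factor of $t/y \asymp T/N$ (times bounded quantities, using $|r|/N \ll 1$), rather than a factor growing like $(t/y)^{\text{(number of derivatives)}}$ times extra powers of $t$. This is a routine but careful application of Faà di Bruno / the chain rule to $\exp(-\mi t\log(1+r/y))$, noting that $\frac{\partial}{\partial y}\log(1+r/y) = \frac{-r/y^2}{1+r/y} \ll \frac{|r|}{N^2}$ and its higher $y$-derivatives (scaled by $y^n$) stay $O(|r|/N)$; so each differentiation of the exponential brings down at most one factor of $t \cdot O(|r|/N) \ll T|r|/N \ll P$. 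Once this is in hand, everything else is bookkeeping with the Leibniz rule and the already-established bounds on $g$, $\omega$, and $G$.
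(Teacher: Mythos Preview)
Your proof is correct and follows essentially the same approach as the paper's: write $f_r = W(x/M)W(y/N)\phi(x,y)$, differentiate under the integral, apply Leibniz, and identify the $y$-derivatives of $(1+r/y)^{-\mi t}$ as the source of the $P^n$. The paper quotes the key inner bound
\[
   \frac{\partial^m}{\partial x^m}\frac{\partial^n}{\partial y^n}\Bigl(x^{-s}y^{-s}(1+\tfrac{r}{y})^{-\mi t}\Bigr) \ll M^{-2\varepsilon_1}(1+|s|)^{m+n}P^n x^{-m}y^{-n}
\]
from \cite[pp.~56--57]{Ng} rather than sketching the Fa\`a di Bruno argument, but your direct treatment of that step is fine. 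One cosmetic difference: the paper retains the factor $M^{-2\varepsilon_1}$, obtains $(T^4/M^2)^{\varepsilon_1}P^n$, and then uses $M \gg T_0/T^{\varepsilon_1} \ge T^{1/2-\varepsilon_1}$ (coming from $|r|\ge 1$) to bound $T^4/M^2 \le T^4$; you simply discard $(xy)^{-\varepsilon_1}$ as $O(1)$, which is equally valid since $M,N \ge 2^{-1/2}$.
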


By Lemma \ref{fpartials}, the support of $f(x,y)=T^{- 4 \e_1} f_r(x,y)$ is in $[M,2M] \times [N,2N]$, with $M \asymp N$, and $f(x,y)$ satisfies the condition \eqref{fcond}. In addition, we claim  $|r| \ll M^{1-\e_1} \le M^{1-\e'}$ for $\e'\le \e_1 < \frac{1}{6}$. Indeed, if $\e_1 < \frac{1}{6}$ and $\e_2 \leq \e$, we have   $  (MT_0)^{\e_1} \ll  T^{3 \e_1} \le T^{\frac{1}{2}} \le T_0$, which is equivalent to
\[
    \frac{M}{T_0} T_{0}^{\e_1} \ll M^{1-\e_1}.
\]
Now, applying Conjecture \ref{divconj} (with $X=M$, $Y=N$, $P=T^{1+ \e_1}T_{0}^{-1}$, and $f(x,y)=T^{- 4 \e_1} f_r(x,y)$), we derive
\begin{equation}\label{IO1dyadic2}
I_O = \sum_{\substack{M,N \\ M \asymp N, MN \ll U^{4+ \varepsilon_2} \\ M,N \gg T_0^{1-\varepsilon_1} } } \tilde{I}_{M,N}
+ \sum_{\substack{M,N \\ M \asymp N, MN \ll U^{4+ \varepsilon_2} \\ M,N \gg T_0^{1-\varepsilon_1} } }   O(\mathcal{E}_{M,N})
+O(1),
\end{equation}
where 
\begin{equation}
\label{tildeIMN}
\tilde{I}_{M,N} = \frac{2T}{\sqrt{MN}} \sum_{0<|r| \ll \frac{M}{T_0}T_{0}^{\varepsilon_1}} \tilde{D}_{f_r} (r),
\end{equation}
\begin{align}  \label{tildeDfr}
 \begin{split}
\tilde{D}_{f_r} (r)  &= \frac{1}{(2\pi \mi)^2} \int_{\mathcal{B}_1} \int_{\mathcal{B}_2} \zeta^4 (z_1) \zeta^4 (z_2)
\sum_{q=1}^{\infty} \frac{c_q(r) G_4 (z_1,q) G_4(z_2,q)}{q^{z_1 + z_2}} \\
&\times\int_{\max(0,r)}^{\infty} f_r (x,x-r) x^{z_1 -1} (x-r)^{z_2 -1} \, dx \, d {z_2} \, d {z_1},
 \end{split}
\end{align}
and 
\begin{equation}
  \label{EMN}
   \mathcal{E}_{M,N}  =
   \frac{T^{1+4\e_1}T^{C \e_1 }}{\sqrt{MN}} \sum_{0 < |r| \ll \frac{M}{T_0} T_{0}^{\varepsilon_1}}  \left(\frac{T}{T_0} \right)^{C}M^{\frac{1}{2}+\varepsilon_0}.
\end{equation} 

Therefore, assuming Conjecture \ref{divconj}, to study $I_O$, we shall estimate $\tilde{I}_{M,N}$ and  $\mathcal{E}_{M,N}$ for the sums in \eqref{IO1dyadic2}. To end this section, we bound the contribution of $\mathcal{E}_{M,N}$ to $I_O$ in \eqref{IO1dyadic2} by proving the following lemma.  

\begin{lem}    \label{adddiverror}
Let $\e, \e_0, \e_1, \e_2 >0$ be constants such that
\begin{equation*}
 \max\{ \e_0, \e_1,  C \e_1 \} \le \frac{\e}{40} \text{ and }  \e_2 \le \e. 
\end{equation*}
Let  $\mathcal{E}_{M,N}$ be defined as in \eqref{EMN}. Then for $T$ sufficiently large with respect to these parameters,  we have
\begin{equation*}
 \sum_{\substack{ M, N \\ M \asymp N, MN \ll U^{4+\varepsilon_2}}}   \mathcal{E}_{M,N}  \ll   
  \left( \frac{T}{T_0} \right)^{1+C} T^{1-\frac{\e}{2}}.
\end{equation*}
\end{lem}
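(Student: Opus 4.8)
The plan is to collapse the triple sum (over $r$ and over the dyadic parameters $M,N$) to a single geometric series, then substitute $U=T^{1-\e}$ and check that the hypotheses on $\e_0,\e_1,\e_2$ push the resulting exponent strictly below that of the target. Since the summand in \eqref{EMN} does not depend on $r$, the inner sum over $0<|r|\ll\frac{M}{T_0}T_0^{\e_1}$ contributes only the number of admissible $r$, which is $\ll \frac{M}{T_0}T_0^{\e_1}$ (and the whole expression is $0$ unless $\frac{M}{T_0}T_0^{\e_1}\gg 1$). Using $M\asymp N$, hence $\sqrt{MN}\asymp M$, this gives
\[
  \mathcal{E}_{M,N}\ll T^{1+4\e_1+C\e_1}\,T_0^{\e_1-1}\,\Big(\tfrac{T}{T_0}\Big)^{C}\,M^{\frac12+\e_0}.
\]

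Next I would sum over the dyadic values $M=2^{k/2}$, $N=2^{\ell/2}$ with $M\asymp N$ and $MN\ll U^{4+\e_2}$. For each $M$ there are only $O(1)$ dyadic $N$ with $N/3\le M\le 3N$, and the constraints $M\asymp N$ and $MN\ll U^{4+\e_2}$ force $M\ll U^{2+\e_2/2}$. The series $\sum_{M}M^{\frac12+\e_0}$ over these dyadic $M$ has common ratio $\ge 2^{1/4}>1$, so it is dominated by its largest term and is $\ll (U^{2+\e_2/2})^{\frac12+\e_0}=U^{1+a}$ with $a:=2\e_0+\tfrac{\e_2}{4}+\tfrac{\e_0\e_2}{2}>0$. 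Hence
\[
  \sum_{\substack{M,N\\ M\asymp N,\ MN\ll U^{4+\e_2}}}\mathcal{E}_{M,N}
  \ll T^{1+4\e_1+C\e_1}\,T_0^{\e_1-1}\,\Big(\tfrac{T}{T_0}\Big)^{C}\,U^{1+a}.
\]

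Finally I would substitute $U=T^{1-\e}$, use $(1-\e)(1+a)\le 1+a-\e$, recall $T_0\ll T$, and divide through by the target $(T/T_0)^{1+C}T^{1-\e/2}$; after cancellation (noting $(T/T_0)^{C}/(T/T_0)^{1+C}=T_0/T$ and $T_0^{\e_1}\ll T^{\e_1}$) the claim reduces to the numerical inequality
\[
  2\e_0+\tfrac{\e_2}{4}+\tfrac{\e_0\e_2}{2}+5\e_1+C\e_1-\tfrac{\e}{2}<0.
\]
This is exactly where the hypotheses $\max\{\e_0,\e_1,C\e_1\}\le\tfrac{\e}{40}$ and $\e_2\le\e$ (with $\e<1$) are used: they bound the left side by $\tfrac{37\e}{80}-\tfrac{\e}{2}=-\tfrac{3\e}{80}<0$, so the entire sum is $\ll T^{-3\e/80}(T/T_0)^{1+C}T^{1-\e/2}$, which is $\ll (T/T_0)^{1+C}T^{1-\e/2}$ for $T$ large in terms of the parameters, the saving $T^{-3\e/80}$ absorbing any stray logarithmic factors. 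The only real work is bookkeeping — tracking the parasitic powers of $T$ coming from $\e_0$, $\e_1$, $C\e_1$, $\e_2$ and confirming they genuinely add up to something strictly less than $\e/2$; there is no analytic obstacle.
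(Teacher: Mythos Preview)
Your proof is correct and follows essentially the same route as the paper: bound the $r$-sum by its length, use $M\asymp N$ to reduce to a single dyadic geometric sum in $M$ bounded by its top term $\ll U^{(2+\e_2/2)(\frac12+\e_0)}$, substitute $U=T^{1-\e}$, and verify the resulting exponent beats $1-\e/2$. The only cosmetic difference is that the paper absorbs the parasitic factors into intermediate bounds $T^{\e/8}$, $T^{\e/4}$ along the way, whereas you carry all of $\e_0,\e_1,C\e_1,\e_2$ to the end and do a single numerical check $\tfrac{37\e}{80}<\tfrac{\e}{2}$; your bookkeeping is actually more transparent.
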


\begin{proof}
As $\e_0 \max\{ C, 1 \} \le \e_1$ and $M \asymp N$, a straightforward calculation shows that $ \mathcal{E}_{M,N}$ is 
\begin{align*}
 \ll \frac{T^{1 + 4\e_1+C \e_1}}{M}  \sum_{0 < |r| \ll \frac{M}{T_0} T^{\varepsilon_1}}  
  \left( \frac{T}{T_0} \right)^{C}M^{\frac{1}{2}+\varepsilon_0}
  \ll  T^{\frac{\e}{8}} \left( \frac{T}{M}   \right)
      \left( \frac{T}{T_0} \right)^{C} M^{\frac{1}{2} +\varepsilon_0}
      \left(
       \frac{M}{T_0}  T^{\varepsilon_1} \right),
\end{align*} 
which is  $\ll 
    T^{  \frac{\e}{4}} \left( \frac{T}{T_0} \right)^{1+C} M^{\frac{1}{2}}$. 
Hence, we have
$$
\sum_{\substack{ M \asymp N\\ MN \ll U^{4+ \e_2} }}   \mathcal{E}_{M,N}
  =  \sum_{\substack{ M \asymp N\\ MN \ll U^{4+ \e_2} }}  T^{\frac{\e}{4}} \left( \frac{T}{T_0} \right)^{1+C} M^{\frac{1}{2}} 
  \ll  T^{  \frac{\e}{4}} \left( \frac{T}{T_0} \right)^{1+C}  
   \sum_{\substack{M \ll U^{2+\frac{\e_2}{2}} \\
  N \asymp M} } M^{\frac{1}{2}}, 
$$
which is
$$
 \ll T^{\frac{\e}{4}} \left( \frac{T}{T_0} \right)^{1+C} ( U^{2+\frac{\e_2}{2}})^{\frac{1}{2}}
     =  T^{ \frac{\e}{4}} \left( \frac{T}{T_0} \right)^{1+C}  U^{1+ \frac{\e_2}{4}}.
$$ 
Finally, as $\e_2 \le \e$, it is clear that
$$
  U^{1+ \frac{\e_2}{4}} = (T^{1-\e})^{1+ \frac{\e_2}{4}} = T^{1-\e +\frac{\e_2}{4}-\frac{\e \e_2}{4}} \le T^{1-\frac{3\e}{4}},
$$
and thus
\begin{equation*}
  \sum_{\substack{ M, N \\ M \asymp N, MN \ll U^{4+\varepsilon_2}}}   \mathcal{E}_{M,N} 
  \ll  T^{ \frac{\e}{4}} \left( \frac{T}{T_0} \right)^{1+C} T^{1-\frac{3\e}{4}}
  \le \left( \frac{T}{T_0} \right)^{1+C} T^{1-\frac{\e}{2}}
\end{equation*}
as desired. 
\end{proof}

\subsection{Expanding the range of $|r|$} \label{expandingr}

Recall that by Lemma \ref{adddiverror}, the equation \eqref{IO1dyadic2} yields 
\begin{align*}
I_O = \sum_{\substack{M,N \\ M \asymp N, MN \ll U^{4+ \varepsilon_2} \\ M,N \gg T_0^{1-\varepsilon_1} } } \tilde{I}_{M,N}
+   O \left( \left( \frac{T}{T_0} \right)^{1+C} T^{1-\frac{\e}{2}} \right),
\end{align*}
where $ \tilde{I}_{M,N}$ is defined in \eqref{tildeIMN}. Note that  for the $s$-integral of $ \tilde{I}_{M,N}$  (see \eqref{fMN}), we can move the integral from the line $\Re(s) = \e_1$ to $\Re(s) = \e_3$, where $0< \delta< \e_3<0.15$, without encountering any poles. 
To handle the sum above, we first control $\tilde{I}_{M,N}$, defined in \eqref{tildeIMN}, by expanding the range of the sum in \eqref{tildeIMN}  to  $0 < |r| \le R_0$ with 
\begin{equation*}
R_0 =T^{5}.
\end{equation*} 
Observe that from \eqref{fstardefn} and \eqref{fMN}, it follows that the $x$-integral in \eqref{tildeDfr} can be written as
\begin{equation}
  \label{iMNr}
 i_{M,N,r} =  \int_{\max(0,r)}^{\infty} W \left( \frac{x}{M} \right) W \left(  \frac{x-r}{N} \right) f^{*}(x,x-r) 
 x^{z_{1}-1}(x-r)^{z_{2}-1} \,  dx.
\end{equation}
Since $W$ is supported in $[1,2]$, we may only consider the situation that the variable $x$ in \eqref{iMNr} lies in $(M,2M) \cap (N+r, 2N+r)$. It is clear that if such an open interval is empty, then $ i_{M,N,r} =0$.  Without loss of generality, we consider the case $r \ge 1$. Note that the condition $x>N+r$ gives $x-r \ge N \ge 2^{-\frac{1}{2}}$, which allows us to have $x \ge r+2^{-\frac{1}{2}}$. Also, as we know  $N+r \le 2M$ (because $(M,2M) \cap (N+r, 2N+r)$ is empty otherwise).  This and assumption  \eqref{MNcondition} give 
$r \le 2M-N \le 2M- \frac{M}{3}= \frac{5M}{3}$. 
In light of these observations, applying \eqref{f*bound}, we conclude that for $r \gg \frac{M}{T_0} T_{0}^{\varepsilon_1}$  and $j$ sufficiently large, 
$$
  f^{*}(x,x-r) 
  \ll \frac{T^{4 \e_3  }}{ |\log(\frac{x}{x-r})  T_0|^j} \le \frac{T^{4 \e_3 }}{ |\log(\frac{2M}{2M-r})  T_0|^j}
$$
as $x \in [r+2^{-\frac{1}{2}},2M]$. Thus, since $|\log(1-x)| \gg x$  for $x\in (0,\frac{5}{6} ] $, we arrive at
\begin{equation}
  \label{fstarxr}
  f^{*}(x,x-r) 
  \ll   \frac{T^{4 \e_3  }}{ |\log(1-\frac{r}{2M})  T_0|^j}   \ll   \frac{T^{4 \e_3  }}{  |\frac{r}{2M} T_0|^j}  \ll T^{4 \e_3  -j \varepsilon_1}  \ll T^{-B}.
\end{equation}
Using  \eqref{iMNr} and \eqref{fstarxr},  we then deduce $i_{M,N,r} \ll M^{1+2 \delta}T^{-B}$ for $r \gg \frac{M}{T_0} T^{\varepsilon_1}$. Arguing similarly, we also have
$i_{M,N,r} \ll M^{1+2\delta} T^{-B}$ for $r$ such that $-r \gg \frac{M}{T_0}  T^{\varepsilon_1}$.

Note that we are trying to bound 
\begin{equation*}
   \sum_{\substack{M,N \\ M \asymp N, MN \ll U^{4+ \varepsilon_2} \\ M,N \gg T_0^{1-\varepsilon_1} } } 
   \sum_{\frac{M}{T_0}T_{0}^{\varepsilon_1} \ll  |r| \le R_0}  \frac{2T}{\sqrt{MN}}  |\tilde{D}_{f_r} (r)|. 
\end{equation*}
From \eqref{tildeDfr} and Lemma  \ref{G4formulae}, it follows that
\begin{align}  \label{tildeDfr-bd}
 \begin{split}
|\tilde{D}_{f_r} (r)|& \ll \int_{\mathcal{B}_1} \int_{\mathcal{B}_2}  \frac{1}{(r_1 r_2)^4} 
\sum_{q=1}^{\infty} \frac{ (q,r)  ((32)^{\omega(q)} \tau_4(q))^2 q^{r_1+r_2}}{q^{2-r_1-r_2}} \\
&\times \int_{\max(0,r)}^{2M} T^{-B} |x|^{\Re(z_1) -1} |x-r|^{\Re(z_2) -1} \, dx \, |d {z_2}| \, |d {z_1}| \\
& \ll   \frac{1}{(r_1 r_2)^4}  (r_1 r_2)  \sum_{q=1}^{\infty}  \frac{ (q,r) 32^{2\omega(q)} \tau_4(q)^2}{q^{2-2(r_1+r_2)}} T^{-B} M^{1+r_1+r_2} \\
& \ll \tau_2(r) T^{-B} M^{1+r_1+r_2}
 \end{split}
\end{align}
as long as $r_1+r_2 \le \frac{1}{4}$ and $r_1, r_2$ are bounded away from zero. (Here we used the bounds 
$\sum_{q=1}^{\infty} \frac{(q,r)}{q^c} \le \tau_2(r) \zeta(c)$, for $c >1$, and $\omega(q) \ll \frac{\log q}{\log \log q}+ 1$.) Thus, we have 
\begin{align*}
\begin{split}
  & \sum_{\substack{M,N \\ M \asymp N, MN \ll U^{4+ \varepsilon_2} }} 
   \sum_{\frac{M}{T_0}T_{0}^{\varepsilon_1} \ll  |r| \le R_0}  \frac{2T}{\sqrt{MN}} \tau_2(r) T^{-B} M^{1+r_1+r_2}  \\
  & \ll T^{1-B}   \sum_{\substack{M,N \\ M \asymp N, MN \ll U^{4+ \varepsilon_2} }}   M^{1+r_1+r_2} (R_0 \log R_0) \\
  & \ll T^{1-B} (R_0 \log R_0)   (U^{2+ \varepsilon_2/2})^{1+r_1+r_2}  \\
  & \ll T^{1-B} R_0 T^{(2+\varepsilon_2/2)(1+r_1+r_2)} \log T \\
  & \ll T^{1+5 + (2+\varepsilon_2/2)(1+r_1+r_2) -B} \log T. 
\end{split}
\end{align*}
So, if we choose $B \ge 2+5 + (2+\varepsilon_2/2)(1+r_1+r_2)$, then we have an error term that is $O((\log T)/T)$. 
Hence, we have established
\begin{equation} \label{equ:I_O-mid}
  I_{O} = \sum_{\substack{M,N \\ M \asymp N, MN \ll U^{4+ \varepsilon_2} \\ M,N \gg T_0^{1-\varepsilon_1} } }  \frac{2T}{\sqrt{MN}} \sum_{0<|r| \le R_0} \tilde{D}_{f_r} (r)+  O \left( \left( \frac{T}{T_0} \right)^{1+C} T^{1-\frac{\e}{2}} \right).
\end{equation}

\subsection{Making $M$ and $N$ unrestricted}

In this section, we shall remove the conditions on $M$ and $N$ from the sum in \eqref{equ:I_O-mid} by showing that $i_{M,N,r}$ is small for $M$ and $N$ not satisfying the conditions. Firstly, for $M \not \asymp N$, we know $N > 3M$ or $N < M/3$. Without loss of generality, we consider the case that $N > 3M$.  From the support of $W$, in \eqref{iMNr}, we only need to consider $x$ satisfying 
$M \le x \le 2M$ and $N \le x-r \le 2N$, which tell us 
$|\log ( \frac{x}{x-r} )| = \log  ( \frac{x-r}{x}  ) \ge \log(\frac{N}{2M}) \ge \log(\frac{3}{2})$. Hence,  by  \eqref{f*bound}, \eqref{iMNr} and the bound $|z_i-1|<\frac{1}{10}$, we deduce
$$
   i_{M,N,r}  \ll_j \int_{M}^{2M} \frac{T^{4\e_3  } }{T_0^j} (MN)^{\frac{1}{10}} \,dx
   \ll \frac{T^{4 \e_3 } }{T_0^j} M (MN)^{\frac{1}{10}} 
   \ll \frac{T^{4\e_3 } }{T_0^j} \frac{MN}{T_0^{1-\e_1}} (MN)^{\frac{1}{10}}
    \ll \frac{T^{4\e_3  +\frac{11}{10} (1-\e)(4+\varepsilon_2) }}{T_0^{j+1-\e_1 }},
$$
where we used the condition $N \gg T_{0}^{1-\e_1}$ (resp. $MN \ll U^{4+\varepsilon_2}= T^{(1-\e)(4+\varepsilon_2)}$) for the third (resp. last) estimate. Using \eqref{cond3},  we can choose $j$ that is large enough so that  $i_{M,N,r}  \ll T^{-B}$. Therefore, for $M$ and $N$ with $M \not \asymp N$, processing a similar argument as in the previous section, we obtain $\frac{2T}{\sqrt{MN}} \sum_{0<|r| \le R_0} \tilde{D}_{f_r} (r) \ll (\log T)^{-2}$. 

Secondly, we deal with the situation that either $M$ or $N \ll T_{0}^{1-\varepsilon_1}$. We may suppose, without loss of generality, that $M,N \ge 2^{-\frac{1}{2}}$, $M \ll
T_{0}^{1-\varepsilon_1}$, and $r\ge 1$. By the support of $W$, in \eqref{iMNr}, we can require that the variable $x$ satisfies  $x \ge N+r > r \ge 1$, which implies  $|\log(\frac{x}{x-r})| \ge \log(\frac{x}{x-1}) > x^{-1} \ge (2M)^{-1} \gg T_{0}^{\varepsilon_1-1}$.  Thus, by \eqref{f*bound}, for any $B>0$, we derive
$$
f^{*}(x,x-r) \ll T^{4\e_3 } T_{0}^{ -j (\varepsilon_1-1) -j} \ll T^{-B}
$$
when $j$ is sufficiently large. With this bound in hand, we again have  $ \frac{2T}{\sqrt{MN}} \sum_{0<|r| \le R_0} \tilde{D}_{f_r} (r)  \ll (\log T)^{-2}$.  

From the above discussion, if we include all $M$ and $N$, with
$MN \ll U^{4 +\varepsilon_2}$, such that  either $M \not \asymp N$ or $\min(M,N) \ll T_{0}^{1-\varepsilon_1}$, in the sum in \eqref{equ:I_O-mid}, we will introduce an extra error, which is at most
 $(\log T)(\log T)(\log T)^{-2}  \ll 1$ and thus negligible, as the choices for each of such $M$ and $N$ are $O(\log T)$.

Lastly, we consider the contribution $\widetilde{\Delta}$ from those pairs $(M,N)$  in \eqref{equ:I_O-mid} with $MN \gg U^{4 +\varepsilon_2}$ . Using \eqref{fstarV} and Lemma \ref{lem:bd-g}(iv), for any constant $A>0$, we obtain
$f^{*}(x,y) \ll ( \frac{U^{4}}{xy} )^{A}$ if $xy >U^{4}$. Similar to  \eqref{tildeDfr-bd}, for any arbitrary $A>0$, we have
\begin{align*}
\widetilde{\Delta}
 & \ll    \sum_{r \ne 0}  \tau_2(|r|) 
  \sum_{\substack{ M,N \\   MN \gg U^{4+\varepsilon_2}  \\ M-2N < r < 2M-N }}   \frac{T}{\sqrt{MN}}
  \int_{\max( M, N+r,r,0)}^{\min(2M,2N+r)}  \left(\frac{U^4}{x(x-r)}  \right)^{A}  
  (MN)^{\frac{1}{10}}
 \, dx \\
  & \ll  T   \sum_{r \ne 0}  \tau_2(|r|) 
  \sum_{\substack{M,N \\ MN \gg \max(U^{4+\varepsilon_2}, |r|)}  }  (MN)^{-\frac{1}{2}+\frac{1}{10}} 
  \left( \frac{U^4}{MN} \right)^{A} \min(M,N)
\end{align*}
provided that $r_1+r_2 \le \frac{1}{4}$ and $r_1, r_2$ are bounded away from zero. To estimate the last sum, we may consider $MN=H$, which forces $M$ and $N$ to satisfy $\min(M,N) \le H^{\frac{1}{2}}$.  Clearly, there is an integer $h=h(H) \ge 0$ such that $H=2^{\frac{h-2}{2}} \gg U^{4+\varepsilon_2}$. Also, we observe that for every given $H$, we can have at most $h(H)+1 \ll \log H$ pairs $M,N$ satisfying $MN=H$. Hence, we arrive at
\begin{align*}
\widetilde{\Delta}
 & \ll T  \sum_{H \gg U^{4+\varepsilon_2}}
 H^{-\frac{1}{2}+\frac{1}{10}} \left( \frac{U^4}{H} \right)^{A}
 H^{\frac{1}{2}} (\log H)   \sum_{1 \le r \ll H} \tau_2(r) 
 \ll T     \sum_{H \gg U^{4+\varepsilon_2}}  H^2   \left( \frac{U^4}{H} \right)^{A}.
\end{align*}
Observe that for $A>2$, the last sum is convergent and is $\ll U^{2(4+\e_2) -A\e_2 } = T^{(1-\e)(2(4+\e_2) -A\e_2 )} $. Therefore, we establish $\widetilde{\Delta}\ll_A 1$ for any $A \ge \frac{1}{(1-\e)\e_2} + \frac{8}{\e_2} +2$.

To summarise, we have removed all the restrictions on $M$ and $N$ appearing in \eqref{equ:I_O-mid}. More precisely, we have shown 
\begin{equation} \label{equ:I_O-1}
  I_{O} = \sum_{M,N}  \frac{2T}{\sqrt{MN}} \sum_{0<|r| \le R_0} \tilde{D}_{f_r} (r)+  O \left( \left( \frac{T}{T_0} \right)^{1+C} T^{1-\frac{\e}{2}} \right).
\end{equation}

\subsection{A smooth partition of unity, again}

Recalling $W(x) = x^{-\frac{1}{2}} W_0 (x)$,
we see 
\begin{align*}
\sum_{M,N} \frac{1}{\sqrt{MN}} W \left(\frac{x}{M} \right) W \left(\frac{x-r}{N} \right)
&= \sum_{M,N} \frac{1}{\sqrt{MN}}  \left(\frac{x}{M} \right)^{-1/2}  \left(\frac{x-r}{N} \right)^{-1/2}
W_0 \left(\frac{x}{M} \right) W_0 \left(\frac{x-r}{N} \right)\\
&= x^{-1/2} (x-r)^{-1/2}  \sum_{M,N} W_0 \left(\frac{x}{M} \right) W_0 \left(\frac{x-r}{N} \right),
\end{align*}
which is equal to $x^{-1/2} (x-r)^{-1/2}$ as $ \sum_{k\in \mathbb{Z}} W_0 (x/2^{\frac{k}{2}}) =1$.
From \eqref{fMN} and the above identity, inserting \eqref{tildeDfr} into \eqref{equ:I_O-1}, we have
\begin{align*}
 \begin{split}
I_O& =2 T  \sum_{1\leq |r| \leq R_0} 
\frac{1}{(2\pi \mi)^2} \int_{\mathcal{B}_1} \int_{\mathcal{B}_2} \zeta^4 (z_1) \zeta^4 (z_2)
\sum_{q=1}^{\infty} \frac{c_q(r) G_4 (z_1,q) G_4(z_2,q)}{q^{z_1 + z_2}}  \\
&\times \int_{\max(0,r)}^{\infty} f^* (x,x-r) x^{z_1 -3/2} (x-r)^{z_2 -3/2} \, dx \, d {z_2} \, d {z_1} +  O \left( \left( \frac{T}{T_0} \right)^{1+C} T^{1-\frac{\e}{2}} \right),
 \end{split}
\end{align*}
where 
\begin{equation*}
   f^*(x,x-r) = \frac{1}{2 \pi \mi} \int_{(\e_3 )} 
\frac{G(s)}{s} \left( \frac{1}{\pi^{4} x(x-r)} \right)^s 
\frac{1}{T} \int_{-\infty}^{\infty} \left( 
1+\frac{r}{x-r} \right)^{-\mi t} g(s,t) \left( \frac{U}{t} \right)^{4s} \omega(t) \,dt \,ds, 
\end{equation*}
Hence, we have
\begin{align*}
I_O& = 2 \sum_{1\leq |r| \leq R_0} 
\frac{1}{(2\pi \mi)^2} \int_{\mathcal{B}_1} \int_{\mathcal{B}_2} \zeta^4 (z_1) \zeta^4 (z_2)
\sum_{q=1}^{\infty} \frac{c_q(r) G_4 (z_1,q) G_4(z_2,q)}{q^{z_1 + z_2}}
\int_{\max(0,r)}^{\infty} x^{z_1 -\frac{3}{2}} (x-r)^{z_2 -\frac{3}{2}} \\
&\times  \frac{1}{2 \pi \mi} \int_{( \e_3 )} 
\frac{G(s)}{s} \left( \frac{1}{\pi^{4} x(x-r)} \right)^s 
 \int_{-\infty}^{\infty} \left( 
1+\frac{r}{(x-r)} \right)^{-\mi t} g(s,t)  \left( \frac{U}{t} \right)^{4s} \omega(t) \, dt \, ds\, 
dx \, d {z_2} \, d {z_1}\\
&+ O \left( \left( \frac{T}{T_0} \right)^{1+C} T^{1-\frac{\e}{2}} \right).
\end{align*}
We now split this sum, according to $r >0$ and $r<0$, to obtain 
\begin{align*}
I_O& =  2 \sum_{1\leq r \leq R_0} 
\frac{1}{(2\pi \mi)^2} \int_{\mathcal{B}_1} \int_{\mathcal{B}_2} \zeta^4 (z_1) \zeta^4 (z_2)
\sum_{q=1}^{\infty} \frac{c_q(r) G_4 (z_1,q) G_4(z_2,q)}{q^{z_1 + z_2}} 
 \int_{\max(0,r)}^{\infty} x^{z_1 -\frac{3}{2}} (x-r)^{z_2 -\frac{3}{2}} \\
&\times  \frac{1}{2 \pi \mi} \int_{(\e_3  )} 
\frac{G(s)}{s} \left( \frac{1}{\pi^{4} x(x-r)} \right)^s 
 \int_{-\infty}^{\infty} \left( 
1+\frac{r}{(x-r)} \right)^{-\mi t} g(s,t)  \left( \frac{U}{t} \right)^{4s} \omega(t) \, dt \, ds\, 
dx \, d {z_2} \, d {z_1} \\
& + 2 \sum_{1\leq r \leq R_0} 
\frac{1}{(2\pi \mi)^2} \int_{\mathcal{B}_1} \int_{\mathcal{B}_2} \zeta^4 (z_1) \zeta^4 (z_2)
\sum_{q=1}^{\infty} \frac{c_q(-r) G_4 (z_1,q) G_4(z_2,q)}{q^{z_1 + z_2}}
 \int_{\max(0, -r)}^{\infty} x^{z_1 -\frac{3}{2}} (x+r)^{z_2 -\frac{3}{2}} \\
&\times  \frac{1}{2 \pi \mi} \int_{(\e_3  )} 
\frac{G(s)}{s} \left( \frac{1}{\pi^{4} x(x+r)} \right)^s 
 \int_{-\infty}^{\infty} \left( 
1+\frac{ -r }{(x+r)} \right)^{-\mi t} g(s,t)  \left( \frac{U}{t} \right)^{4s} \omega(t) \, dt \, ds\, 
dx \, d {z_2} \, d {z_1}\\
&+ O \left( \left( \frac{T}{T_0} \right)^{1+C} T^{1-\frac{\e}{2}} \right).
\end{align*}
In the $x$-integrals, we make the variable change $x=ry$ to deduce   
\begin{align*}
I_O
& =  2 \sum_{1\leq r \leq R_0} 
\frac{1}{(2\pi \mi)^2} \int_{\mathcal{B}_1} \int_{\mathcal{B}_2} \zeta^4 (z_1) \zeta^4 (z_2)
\sum_{q=1}^{\infty} \frac{c_q(r) G_4 (z_1,q) G_4(z_2,q)}{q^{z_1 + z_2}}  r^{z_1+z_2-3}\\
&\times  \int_{\max(0,1) }^{\infty} y^{z_1 -\frac{3}{2}} (y-1)^{z_2 -\frac{3}{2}} \\
&\times  \frac{1}{2 \pi \mi} \int_{(\e_3  )} 
\frac{G(s)}{s}  \frac{1}{r^{2s}} \left( \frac{1}{\pi^{4} y(y-1)} \right)^s 
 \int_{-\infty}^{\infty} \left(  \frac{y-1}{y} 
 \right)^{\mi t} g(s,t)  \left( \frac{U}{t} \right)^{4s} \omega(t) \, dt \, ds\, 
(r dy) \, d {z_2} \, d {z_1} \\
& +  2\sum_{1\leq r \leq R_0} 
\frac{1}{(2\pi \mi)^2} \int_{\mathcal{B}_1} \int_{\mathcal{B}_2} \zeta^4 (z_1) \zeta^4 (z_2)
\sum_{q=1}^{\infty} \frac{c_q(r) G_4 (z_1,q) G_4(z_2,q)}{q^{z_1 + z_2}} 
r^{z_1+z_2-3}\\
&\times  \int_{\max(0,-1)}^{\infty} y^{z_1 -\frac{3}{2}} (y+1)^{z_2 -\frac{3}{2}} \\
&\times  \frac{1}{2 \pi \mi} \int_{(\e_3  )} 
\frac{G(s)}{s}  \frac{1}{r^{2s}}  \left( \frac{1}{\pi^{4} y(y+1)} \right)^s 
 \int_{-\infty}^{\infty} \left(  \frac{y+1}{y}
 \right)^{\mi t} g(s,t)  \left( \frac{U}{t} \right)^{4s} \omega(t) \, dt \, ds\, 
(r \, dy) \, d {z_2} \, d {z_1}\\
&+ O \left( \left( \frac{T}{T_0} \right)^{1+C} T^{1-\frac{\e}{2}} \right).
\end{align*}
It follows that $I_O= I^{+} + I^{-} +  O ( ( \frac{T}{T_0} )^{1+C} T^{1-\frac{\e}{2}} )$ where 
\begin{align*}
  I^{\pm}  & = \frac{2}{(2\pi \mi)^2} \sum_{1\leq r \leq R_0} 
 \int_{\mathcal{B}_1} \int_{\mathcal{B}_2} \zeta^4 (z_1) \zeta^4 (z_2)
\sum_{q=1}^{\infty} \frac{c_q(r) G_4 (z_1,q) G_4(z_2,q)}{q^{z_1 + z_2}}   r^{z_1+z_2-2}\\
 &\times \int_{\max(0,\pm 1) }^{\infty} y^{z_1 -\ \frac{3}{2}} (y \mp 1)^{z_2 - \frac{3}{2}} \\
&\times  \frac{1}{2 \pi \mi} \int_{(\e_3  )} 
\frac{G(s)}{s}  \frac{1}{r^{2s}} \left( \frac{1}{\pi^{4} y(y \mp 1)} \right)^s 
 \int_{-\infty}^{\infty} \left(  \frac{y \mp 1}{y} 
 \right)^{\mi t} g(s,t)  \left( \frac{U}{t} \right)^{4s} \omega(t) \, dt \, ds\,  dy \, d {z_2} \, d {z_1}. 
\end{align*}
We see 
\begin{align*}
  I^{\pm}  & =  
\frac{2}{(2\pi \mi)^3} \int_{\mathcal{B}_1} \int_{\mathcal{B}_2} \int_{(\e_3  )}  \zeta^4 (z_1) \zeta^4 (z_2)
 \sum_{1\leq r \leq R_0}
\sum_{q=1}^{\infty} \frac{c_q(r) G_4 (z_1,q) G_4(z_2,q)}{q^{z_1 + z_2} r^{2s+2-z_1-z_2}}\\
&\times  
\frac{G(s)}{s}   \frac{1}{  \pi^{4s} }
 \int_{-\infty}^{\infty} g(s,t)  \left( \frac{U}{t} \right)^{4s} \omega(t) 
\left(  \int_{\max(0,\pm 1) }^{\infty} y^{z_1 -s-3/2-\mi t} (y \mp 1)^{z_2-s - \frac{3}{2}+\mi t } \, dy  \right)  \, dt \, ds\,  d {z_2} \, d {z_1}. 
\end{align*} 
For $\eta =\pm 1$, we have 
$$
   \int_{\max(0,\pm 1) }^{\infty} y^{z_1 -s- \frac{3}{2}-\mi t} (y \mp 1)^{z_2-s - \frac{3}{2}+\mi t }
\,dy =  \int_{\max(0,\eta) }^{\infty} y^{z_1 -s- \frac{3}{2}-\mi t} (y -\eta)^{z_2-s - \frac{3}{2}+\mi t } \,dy,
$$
which equals
$$ \begin{cases}
\mathrm{B}(z_2-s-\frac{1}{2}+\mi t , -z_1-z_2+2s+2) & \text{ if } \eta=1; \\
\mathrm{B}(z_1-s-\frac{1}{2}-\mi t,-z_1-z_2+2s+2) & \text{ if } \eta=-1;
\end{cases}
  =
 \begin{cases}
\frac{\Gamma(z_2-s-\frac{1}{2}+\mi t ) \Gamma( -z_1-z_2+2s+2)}{ \Gamma(s-z_1+\frac{3}{2}+\mi t )  } & \text{ if } \eta=1; \\
\frac{\Gamma(z_1-s-\frac{1}{2}-\mi t) \Gamma(-z_1-z_2+2s+2)}{\Gamma(s-z_2+\frac{3}{2}-\mi t) } & \text{ if } \eta=-1.
\end{cases} 
$$
Here, we used the following identities for  the Beta function $\mathrm{B}$:
$$
\int_{0}^{\infty}\frac{x^{u-1}}{(1+x)^{v}}\,dx=\mathrm{B}(u,v-u)=\frac{\Gamma(u)\Gamma(v-u)}{\Gamma(v)}
$$
for $\Re(u)>0$ and $\Re(v)>0$. Therefore, it follows that 
\begin{align}\label{IO-mid-expression}
 \begin{split}
 &  I_O \\ & =  
\frac{2}{(2\pi \mi)^3} \int_{\mathcal{B}_1} \int_{\mathcal{B}_2} \int_{(\e_3)}  \zeta^4 (z_1) \zeta^4 (z_2)
 \sum_{1\leq r \leq R_0}
\sum_{q=1}^{\infty} \frac{c_q(r) G_4 (z_1,q) G_4(z_2,q)}{q^{z_1 + z_2} r^{2s+ 2-z_1-z_2}} 
\frac{G(s)}{s  \pi^{4s} }   
 \int_{-\infty}^{\infty} g(s,t)  \left( \frac{U}{t} \right)^{4s} \omega(t)  \\ 
 & 
\times  \Gamma( -z_1-z_2+2s+2)\left( 
\frac{\Gamma(z_2-s-\frac{1}{2}+\mi t )}{ \Gamma(s-z_1+\frac{3}{2}+\mi t )  } 
+ 
\frac{\Gamma(z_1-s-\frac{1}{2}-\mi t) }{\Gamma(s-z_2+\frac{3}{2}-\mi t) }
   \right)  \, dt \, ds\,   d {z_2} \, d {z_1}\\
   &+ O \left( \left( \frac{T}{T_0} \right)^{1+C} T^{1-\frac{\e}{2}} \right). 
 \end{split}  
\end{align}

\subsection{Moving the integral right to $\Re(s)=1$}
 
We now move the $s$-integral in \eqref{IO-mid-expression} right to the line $\Re(s)=1$. We do this because
we may apply Proposition \ref{Hlemma}(i) later.
Observe that the term in the rounded bracket in \eqref{IO-mid-expression}  has simple poles at 
$s=\p_1 = -\frac{1}{2}+ z_1 - \mi t$  and $s=\p_2 = -\frac{1}{2} + z_2  + \mi t$.    
Using $\Gamma(z) \sim z^{-1}$ as $z \to 0$, we find that the residue at $s=\p_1$ (for the $s$-integral) is
\begin{align*}
-\frac{G(\p_1)}{\p_1 \pi^{4\p_1} r^{2\p_1}} g(\p_1, t) \left( \frac{U}{t} \right)^{4\p_1} \frac{ \Gamma (-z_1 -z_2 + 2\p_1 + 2)}{\Gamma(\p_1 - z_2 + \frac{3}{2} -\mi t)},
\end{align*}
and the residue at $s = \p_2$ is 
\begin{align*}
-\frac{G(\p_2)}{\p_2 \pi^{4\p_2} r^{2\p_2}} g(\p_2, t) \left( \frac{U}{t} \right)^{4\p_2} \frac{ \Gamma (-z_1 -z_2 + 2\p_2 + 2)}{\Gamma(\p_2 - z_1 + \frac{3}{2} +\mi t)} .
\end{align*}
Inserting $\p_1 = -\frac{1}{2}+ z_1 - \mi t$  and $\p_2 = -\frac{1}{2} + z_2  + \mi t$ into the above residues, respectively, we see the residue at $\p_i$ is
\[
-  \frac{G(\p_i)}{\p_i \pi^{4\p_i} r^{2\p_i}} g(\p_i, t) \left( \frac{U}{t} \right)^{4\p_i} .
\]
By Lemma \ref{lem:bd-g}(i), when $c_1 T \leq t \leq c_2 T$,  $\frac{G(\p_i)}{\p_i \pi^{4\p_i} r^{2\p_i}} g(\p_i, t) \left( \frac{U}{t} \right)^{4\p_i}$ is
\[
 \ll \frac{|G(\p_i)|}{ \sqrt{|\Im(z_i) + (-1)^i t|^2 + 1}} r^{1 - 2\Re(z_i)}
 \left( \frac{U}{2} \right)^{-2 + 4\Re(z_i)}    \left( \frac{(|\Im(z_i) + (-1)^i t|)^2+1}{t}  \right).
\]
As  $G(\p_i) \ll (|\Im(z_i) + (-1)^i t|)^2+1)^{-B}$, we conclude that the  contribution of each residue  is at most
\begin{align*} 
 \begin{split}
&   \sum_{1 \le r \le R_0} \frac{ |\zeta(z_1)\zeta(z_2)|^4}{r^{2-\Re(z_1) -\Re(z_2) }}\sum_{q=1}^{\infty} \frac{|c_q(r) G_4 (z_1,q) G_4(z_2,q)|}{q^{\Re(z_1) + \Re(z_2)} } \\
&\times  \int_{c_1 T}^{c_2 T}  
 \left( \frac{T}{2} \right)^{-2 + 4\Re(z_i)}     \frac{((|\Im(z_i) + (-1)^i t|)^2+1)^{-B}}{t} r^{1-2\Re(z_i)}\,dt  .
 \end{split}
\end{align*}
%
%
%
%
The sum over $q$ can be bounded by a similar manner as in \eqref{tildeDfr}. Therefore the above  is bounded by 
\begin{align*}
    \ll   T^{3-B} \sum_{1 \le r \le R_0} \frac{\tau_2(r) }{r^{1-r_1-r_2}} \ll T^{3-B} R_0 =  T^{8-B}.
\end{align*}
Choosing  $B$ to be large, we then obtain 
\begin{align} \label{I111b}
 \begin{split}
  & I_O\\
  & = \frac{2}{(2\pi \mi)^3} \int_{\mathcal{B}_1} \int_{\mathcal{B}_2} \int_{(1)}  \zeta^4 (z_1) \zeta^4 (z_2)
 \sum_{1\leq r \leq R_0}
\sum_{q=1}^{\infty} \frac{c_q(r) G_4 (z_1,q) G_4(z_2,q)}{q^{z_1 + z_2} r^{2s+ 2-z_1-z_2}}
\frac{G(s)}{s  \pi^{4s} }   
 \int_{-\infty}^{\infty} g(s,t)  \left( \frac{U}{t} \right)^{4s} \omega(t)  \\ 
 & 
\times \Gamma( -z_1-z_2+2s+2) \left( 
\frac{\Gamma(z_2-s-\frac{1}{2}+\mi t ) }{ \Gamma(s-z_1+\frac{3}{2}+\mi t )  } 
+ 
\frac{\Gamma(z_1-s-\frac{1}{2}-\mi t) }{\Gamma(s-z_2+\frac{3}{2}-\mi t) }
   \right)  \, dt \, ds\,  d {z_2} \, d {z_1} \\
&   + O(1 ) +  O \left( \left( \frac{T}{T_0} \right)^{1+C} T^{1-\frac{\e}{2}} \right).
\end{split}
\end{align}

\subsection{Extending $R_0 \to \infty$}

In this section, we shall extend $R_0$ to $\infty$ for the sum in \eqref{I111b}. We start by observing $\Re(s - z_i + 1) \in [1-\delta, 1+ \delta] \subseteq [0.9, 1.1 ],$ $i=1,2$. It implies that $\Re(s - z_1 + 1 + s-z_2 + 1) \geq 2 -2 \delta \geq 1$. For $|\Im(s-z_i+1 )| \le t+1$, $i=1,2$, by Lemma \ref{Stirling}(iii), it follows that 
\begin{align*}
\frac{\Gamma(z_2-s-\frac{1}{2}+\mi t ) }{ \Gamma(s-z_1+\frac{3}{2}+\mi t )  } 
& = \frac{\Gamma(\frac{1}{2} - (s-z_2 + 1) + \mi t) }{ \Gamma(\frac{1}{2}  + (s - z_1 + 1) + \mi t)  } \\
&\ll \frac{(\Im(s - z_1 + 1))^2+(\Im(s-z_2 + 1))^2 }{t^{2}}e^{\frac{\pi}{2}\left(\Im(2s-z_1-z_2)\right)},
\end{align*}
and 
\begin{align*}
\frac{\Gamma(z_1-s-\frac{1}{2}-\mi t) }{\Gamma(s-z_2+\frac{3}{2}-\mi t) }
&= \frac{\Gamma(\frac{1}{2} - (s-z_1 + 1) - \mi t) }{ \Gamma(\frac{1}{2}  + (s - z_2 + 1) - \mi t)  } \\
&\ll \frac{(\Im(s - z_1 + 1))^2+(\Im(s-z_2 + 1))^2 }{t^{2}}e^{-\frac{\pi}{2}\left(\Im(2s-z_1-z_2)\right)}.
\end{align*}
Therefore, we have
\begin{align}   \label{M-gammastirlingB}
 \begin{split}
   \frac{\Gamma(z_2-s-\frac{1}{2}+\mi t ) }{ \Gamma(s-z_1+\frac{3}{2}+\mi t )  } 
   + \frac{\Gamma(z_1-s-\frac{1}{2}-\mi t) }{\Gamma(s-z_2+\frac{3}{2}-\mi t) }
    \ll 
    \frac{(\Im(s - z_1 + 1))^2+(\Im(s-z_2 + 1))^2 }{t^{2}}e^{\frac{\pi}{2}|\Im(2s-z_1-z_2)|}.
      \end{split}
\end{align}
On the other hand, assuming  either $|\Im(s-z_1+1 )| > t+1$ or $|\Im(s-z_2 )+1| > t+1$, by Lemma \ref{Stirling}(ii), we also have
\begin{align}
\label{M-gammabound}
 \begin{split}
&\frac{\Gamma(z_2-s-\frac{1}{2}+\mi t ) }{ \Gamma(s-z_1+\frac{3}{2}+\mi t )  } 
 +  \frac{\Gamma(z_1-s-\frac{1}{2}-\mi t) }{\Gamma(s-z_2+\frac{3}{2}-\mi t) } \\
&  \ll \frac{(\Im(s-z_1+1))^2 + (\Im(s-z_2+1))^2 }{t^2} e^{\frac{\pi}{2} |\Im(2s-z_1 -z_2 +2)|}.
  \end{split}
\end{align}

By Stirling's formula, for $-4 \le x \le 4$ and $|y| \ge \frac{1}{2}$, we have
\begin{equation}
  \label{M-gammazbd}
|\Gamma(x+iy)| \asymp |y|^{x-\frac{1}{2}} e^{-\frac{\pi}{2}|y|},
\end{equation}
which gives
\begin{equation}
  \label{M-gammaoneline}
 |\Gamma(-z_1 -z_2+2s+2)| \ll  (|2s- z_1 -z_2| +1)^{4- \Re(z_1) -\Re(z_2)-\frac{1}{2} } e^{-\frac{\pi}{2} |\Im(2s- z_1 -z_2)|}  .
\end{equation}
In the above, we have used the trivial bound that $|\Gamma (-z_1 -z_2 + 2s +2)| \ll 1$ for $\Im (2s -z_1- z_2 + 2) \leq 2$. Therefore, combining \eqref{M-gammastirlingB}, \eqref{M-gammabound} and \eqref{M-gammaoneline},   for $\Re(s)=1$, we have
\begin{align*} 
 \begin{split}
&\Gamma( -z_1-z_2+2s+2) \left(  \frac{\Gamma(z_2-s-\frac{1}{2}+\mi t ) }{ \Gamma(s-z_1+\frac{3}{2}+\mi t )  }  +  \frac{\Gamma(z_1-s-\frac{1}{2}-\mi t) }{\Gamma(s-z_2+\frac{3}{2}-\mi t) } \right) \\
&\ll 
\frac{|\Im(s-z_1+1)|^2 + |\Im(s-z_2+1)|^2 }{t^2}
(|2s- z_1 -z_2| +1 )^{4- \Re(z_1) -\Re(z_2)-\frac{1}{2} }.
  \end{split}
\end{align*}
Then we see that the $s$-integral in \eqref{I111b} is bounded  by
\begin{align*}
&\ll \left(\frac{U}{t} \right)^4 \int_{(1)} \frac{|G(s)|}{|s|} 
  \left( \frac{t}{2} \right)^{4}   \left(1 + O \left( \frac{|s|^2+1}{t}  \right)\right)
 t^{-2} (|s|+|z_1| + |z_2|+ 1 )^{\frac{7}{2} +  2\delta} r^{-2 }  \,|ds|\\
&\ll r^{-2} t^{2} \int_{(1)} |G(s)| |s|^{10}
\,|ds| \\
&\ll  r^{-2 } t^{2}  . 
\end{align*}
Hence, we conclude that the removal of the condition $r \le R_0=T^{5}$ adds to the right of 
\eqref{I111b} an error at most 
\begin{align*} 
\begin{split}
 \sum_{r > R_0} 
  \int_{c_1 T}^{c_2 T} O(r^{-2+2\delta} t^{2} ) \,dt  \left|\sum_{q=1}^{\infty} \frac{c_q(r) G_4 (z_1,q) G_4(z_2,q)}{q^{z_1 + z_2} } \right|
  & \ll T^3  \sum_{r > T^{5}} r^{-2+2\delta} \tau_2(r)\\
  & \ll T^3  T^{-5 + 15 \delta}\\
  & \ll 1.
\end{split}
\end{align*} 
This  means that on the right of  \eqref{I111b}, we can omit the condition $r \le R_0$. 
Now, by the absolute convergence, we may swap the order of summation and integration there to obtain 
\begin{equation}
\label{IONO}
\begin{split}
 I_O  & = \frac{2}{(2\pi \mi)^3} \int_{\mathcal{B}_1} \int_{\mathcal{B}_2}  \zeta^4 (z_1) \zeta^4 (z_2) \int_{-\infty}^{\infty}\omega(t)  \int_{(1)} 
\frac{G(s)}{s  \pi^{4s} }   
 g(s,t)  \left( \frac{U}{t} \right)^{4s}  \HH_s(z_1,z_2) \\ 
 & 
\times \Gamma( -z_1-z_2+2s+2)\left( 
\frac{\Gamma(z_2-s-\frac{1}{2}+\mi t ) }{ \Gamma(s-z_1+\frac{3}{2}+\mi t )  } 
+ 
\frac{\Gamma(z_1-s-\frac{1}{2}-\mi t) }{\Gamma(s-z_2+\frac{3}{2}-\mi t) }
   \right)  \, dt \, ds\,   d {z_2} \, d {z_1} \\
   &+ O(1 ) + O \left( \left( \frac{T}{T_0} \right)^{1+C} T^{1-\frac{\e}{2}} \right),
\end{split}
\end{equation} 
where 
\begin{equation*}
   \HH_s(z_1,z_2)=  \sum_{r=1}^{\infty}
\sum_{q=1}^{\infty} \frac{c_q(r) G_4 (z_1,q) G_4(z_2,q)}{q^{z_1 + z_2} r^{2s+2-z_1-z_2}}.
\end{equation*}
It is convenient to write 
\begin{equation}
  \label{Hidentity}
  \HH_s(z_1,z_2) = \mathscr{H}(z_1-1,z_2-1,z_1+z_2-2,s)
\end{equation}
where 
\begin{equation}
  \label{curlyH}
 \mathscr{H}(u_1,u_2,u_3,s):=
     \sum_{r=1}^{\infty}
\sum_{q=1}^{\infty} \frac{c_q(r) G_4 (1+u_1,q) G_4(1+u_2,q)}{q^{2+u_3} r^{2s-u_3}} .
\end{equation}

\subsection{Handling $\HH_s(z_1,z_2)$}
We begin with the following proposition connecting $ \mathscr{H}(u_1,u_2,u_3,s)$ and $\zeta(s)$.

\begin{prop} \label{Hlemma}
Let $\delta' \in (0,\frac{1}{5})$.  For $i=1,2,3$, we assume
\begin{equation} \label{cond-ui}
|u_i| < \delta'.
\end{equation}
(i) For $\Re(s)>\frac{1}{2} + 2\delta'$, We have  
\begin{equation*}
     \mathscr{H}(u_1,u_2,u_3,s) = \zeta(2s-u_3) \frac{\zeta(1+2s)^{16} \zeta(1+2s-u_1-u_2) }{ \zeta(1+2s-u_1)^4 \zeta(1+2s-u_2)^4 }
        \mathscr{I}(u_1,u_2,u_3,s),
\end{equation*}
where   
\begin{equation}
\mathscr{I}(u_1,u_2,u_3,s)
= \prod_{p}  \mathscr{I}_p(u_1,u_2,u_3,s),
\label{def-scrI}
\end{equation}
and $\mathscr{I}_p(u_1,u_2,u_3,s)$ is defined in \eqref{Ipdefn} below. \\
(ii) For $\Re(s) = \sigma \ge -\frac{1}{2} + 2 \delta'$, we have  
\begin{equation*}
     \mathscr{I}_p(u_1,u_2,u_3,s)=1 -  Y_p p^{-2-4s} + O( p^{7 \delta' + \vartheta(\sigma)})
\end{equation*}
where 
\begin{equation}
 \label{Yp}
 \begin{split}
 Y_p &=  -6X_2^{-2} - 6X_1^{-2} - 16(X_1 X_2)^{-1} + 4(X_2^{-2} X_1^{-1}) + 4(X_1^{-2} X_2^{-1}) - (X_1 X_2)^{-2}\\
 & - 36 + 24 X_2^{-1} + 24 X_1^{-1}, \\
 X_i &=   p^{-u_i}, \text{ for } i=1,2,3,
\end{split}
\end{equation}
and
\begin{equation*}
  \vartheta(\sigma) = 
  \begin{cases}
  -2 & \text{ if } \sigma \ge 0; \\
  -2-2\sigma & \text{ if } 0 > \sigma \ge -\frac{1}{4}; \\
  -3-6 \sigma & \text{ if } -\frac{1}{4} > \sigma \geq - \frac{1}{2}.
  \end{cases}
\end{equation*}

Moreover, it follows that $  \mathscr{I}(u_1,u_2,u_3,s)$ is holomorphic in $\Re(s) > -\frac{1}{4}+\delta'$, and 
$ \mathscr{H}(u_1,u_2,u_3,s)$ is meromorphic in $\Re(s) > -\frac{1}{4}+\delta'$ with the exception of poles at 
\begin{equation}
 \label{Hpoles}
 s= \frac{1+u_3}{2},\, s=0, \, s= \frac{u_1+u_2}{2}, \,  s =\frac{\rho-1+u_1}{2},  \, s =\frac{\rho-1+u_2}{2}, 
\end{equation} 
where $\rho$ ranges through the non-trivial zeros of $\zeta(s)$. 
\end{prop}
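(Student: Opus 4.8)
The plan is to carry out the $r$-sum by the Ramanujan-sum identity, recognise the remaining $q$-sum as an Euler product, and then compare its local factors with those of the explicit zeta quotient. First I would record the classical evaluation
\[
\sum_{r=1}^{\infty}\frac{c_q(r)}{r^{w}}=\zeta(w)\sum_{e\mid q}\mu(q/e)e^{1-w}=\zeta(w)\beta_w(q),
\]
valid for $\Re(w)>1$, where $\beta_w$ is the multiplicative function with $\beta_w(1)=1$ and $\beta_w(p^{k})=p^{k(1-w)}-p^{(k-1)(1-w)}$ for $k\ge 1$ (this follows by writing $c_q(r)=\sum_{e\mid(q,r)}\mu(q/e)e$ and summing the geometric-type inner series). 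Applying this with $w=2s-u_3$, which is legitimate when $\Re(s)>\tfrac12+2\delta'$ and $|u_3|<\delta'$, gives
\[
\mathscr{H}(u_1,u_2,u_3,s)=\zeta(2s-u_3)\sum_{q=1}^{\infty}\frac{G_4(1+u_1,q)\,G_4(1+u_2,q)\,\beta_{2s-u_3}(q)}{q^{2+u_3}}.
\]
Since $q\mapsto G_4(z,q)$ and $\beta_w$ are multiplicative, the $q$-sum factors as $\prod_p A_p$ with $A_p=\sum_{k\ge 0}G_4(1+u_1,p^{k})G_4(1+u_2,p^{k})\beta_{2s-u_3}(p^{k})p^{-k(2+u_3)}$, and one defines $\mathscr{I}_p$ in \eqref{Ipdefn} to be $A_p$ divided by the $p$-th Euler factor of $\zeta(1+2s)^{16}\zeta(1+2s-u_1-u_2)\zeta(1+2s-u_1)^{-4}\zeta(1+2s-u_2)^{-4}$. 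This proves (i) with $\mathscr{I}=\prod_p\mathscr{I}_p$ as in \eqref{def-scrI}, provided the local product converges — which is exactly what (ii) establishes.

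For (ii) I would insert the formulas of Lemma \ref{G4formulae} in the form $G_4(1+u_i,p)=\tau_4(p)-p^{u_i}+O(p^{-1+\delta'})$, $G_4(1+u_i,p^{2})=\tau_4(p^{2})-\tau_4(p)p^{u_i}+O(p^{-1+2\delta'})$, together with $\beta_{2s-u_3}(p)p^{-(2+u_3)}=p^{-1-2s}-p^{-2-u_3}$ and $\beta_{2s-u_3}(p^{2})p^{-2(2+u_3)}=p^{-2-4s}-p^{-3-2s-u_3}$, and expand $A_p$ in powers of $p$. The structural point is that the coefficient of $p^{-1-2s}$ in $A_p$ is $(\tau_4(p)-p^{u_1})(\tau_4(p)-p^{u_2})=16-4p^{u_1}-4p^{u_2}+p^{u_1+u_2}$, which is precisely the coefficient of $p^{-1-2s}$ in the Euler factor of the zeta quotient, so the first-order terms cancel in $\mathscr{I}_p$ and one is left with $\mathscr{I}_p=1-Y_pp^{-2-4s}+O(p^{7\delta'+\vartheta(\sigma)})$, where $Y_p$ (given in \eqref{Yp}) collects the remaining contributions of size $p^{-2-4s}$: the subleading part of the $k=1$ term, the leading part of the $k=2$ term, and the second-order terms of the reciprocal zeta Euler factor. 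Since $Y_p$ is a fixed polynomial in $p^{u_1},p^{u_2}$ of bounded degree, $|Y_p|\ll p^{2\delta'}$, so the bound on $\mathscr{I}_p$ yields $\sum_p|\mathscr{I}_p-1|<\infty$ for $\Re(s)>-\tfrac14+\delta'$, whence $\mathscr{I}$ is holomorphic there. Consequently $\mathscr{H}=\zeta(2s-u_3)\cdot\tfrac{\zeta(1+2s)^{16}\zeta(1+2s-u_1-u_2)}{\zeta(1+2s-u_1)^{4}\zeta(1+2s-u_2)^{4}}\cdot\mathscr{I}$ is meromorphic in that half-plane, and its only possible poles come from the pole of $\zeta(2s-u_3)$ at $s=\tfrac{1+u_3}{2}$, the pole of $\zeta(1+2s)^{16}$ at $s=0$, the pole of $\zeta(1+2s-u_1-u_2)$ at $s=\tfrac{u_1+u_2}{2}$, and the (nontrivial) zeros of $\zeta(1+2s-u_1)$ and $\zeta(1+2s-u_2)$ at $s=\tfrac{\rho-1+u_1}{2}$ and $s=\tfrac{\rho-1+u_2}{2}$; the trivial zeros of $\zeta(1+2s-u_i)$ lie at $\Re(s)\le-\tfrac32+\tfrac{\delta'}{2}$, outside the region. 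This is exactly the list \eqref{Hpoles}.

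The main obstacle is the exact determination of $Y_p$ together with the verification of the error exponent $7\delta'+\vartheta(\sigma)$. This is a finite but lengthy local computation: every monomial of size $p^{-2-4s}$ produced by the three sources above must be tracked (and the first-order cancellation double-checked), and the tail $\sum_{k\ge 2}G_4(1+u_1,p^{k})G_4(1+u_2,p^{k})\beta_{2s-u_3}(p^{k})p^{-k(2+u_3)}$ must be estimated separately in the ranges $\sigma\ge 0$, $-\tfrac14\le\sigma<0$, and $-\tfrac12\le\sigma<-\tfrac14$, using crude bounds for $G_4(1+u_i,p^{k})$ and accounting for the growth of the factor $\beta_{2s-u_3}(p^{k})=p^{k(1-2s+u_3)}-p^{(k-1)(1-2s+u_3)}$ as $\Re(s)$ decreases; this is where the three-line definition of $\vartheta(\sigma)$ originates.
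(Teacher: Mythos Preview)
Your approach is correct and essentially identical to the paper's: sum over $r$ first via the Ramanujan-sum identity to extract $\zeta(2s-u_3)$, factor the remaining $q$-sum as an Euler product, peel off the zeta quotient by matching the $p^{-1-2s}$ coefficient, and then track the $p^{-2-4s}$ contributions from the $k=1,2$ terms and the second-order part of the zeta Euler factor to obtain $Y_p$ and the error $\vartheta(\sigma)$. The paper carries out exactly this plan, introducing shorthand $W=p^{-1-2s}$, $X_i=p^{-u_i}$, $T_j=G_4(1+u_1,p^j)G_4(1+u_2,p^j)$, $f_j=(1-p^{2s-1-u_3})p^{-j(1+2s)}$ (your $\beta_{2s-u_3}(p^j)p^{-j(2+u_3)}$) and then computing $\tilde{A}_0$, $\tilde{B}_0$, $a_1$, $a_2$ explicitly to verify $Y_p=\tilde{B}_0+\tilde{A}_0 a_1+a_2$ and to pin down the three-range error bound; what you flag as the ``main obstacle'' is precisely where the paper's proof spends its effort.
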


By \eqref{Hidentity}, we see 
\begin{equation}
   \HH_s(z_1,z_2) = \zeta(2s+2-z_1-z_2) \frac{\zeta(1+2s)^{16} \zeta(1+2s-z_1 -z_2 + 2) }{ \zeta(1+2s - z_1 + 1)^4 \zeta(1+2s- z_2 +1)^4 }
       \tilde{ \mathscr{I}}(z_1,z_2,s),
       \label{HHs1}
\end{equation} 
where 
\[
\tilde{ \mathscr{I}}(z_1,z_2,s) :=  \mathscr{I}(z_1-1,z_2-1,z_1 + z_2 -2, s)
\]
We see that $  \HH_s(z_1,z_2)$ has an analytic continuation to $\Re(s) \ge -\frac{1}{4} +\eta'$, where $\eta'=\eta'(\delta)$, with the exception of poles at 
$$
 s= \frac{z_1+z_2-1}{2}, \, 0, \, -1+\frac{z_1+z_2}{2}, \, \frac{\rho+z_1-2}{2}, \, \frac{\rho+z_2-2}{2}
$$
where $\rho$ ranges through non-trivial zeros of $\zeta(s)$. We also see in \eqref{IONO} that 
$\Gamma( -z_1-z_2+2s+2)$ has a pole at 
$$
    s= \frac{z_1+z_2-2}{2},
$$
and 
$\frac{\Gamma(z_2-s-\frac{1}{2}+\mi t ) }{ \Gamma(s-z_1+\frac{3}{2}+\mi t )  } 
+ 
\frac{\Gamma(z_1-s-\frac{1}{2}-\mi t) }{\Gamma(s-z_2+\frac{3}{2}-\mi t) }$ has poles at 
\begin{equation*}
 s=\p_1,\, \p_2. 
\end{equation*}

\subsection{Moving the integral in \eqref{IONO} left to $\Re(s) = \e_3$}\label{e3-line}
Recall \eqref{IONO}. 
We first observe that $ \HH_s(z_1,z_2) = \mathscr{H}(1-z_1,1-z_2,z_1+z_2-2,s)$ and move the contour to $\Re(s) =\e_3$ with  $\e_3 >\delta>0$, which is large enough to miss poles at $s=0$ and $s= -1 + \frac{z_1+z_2}{2}$ of $ \HH_s(z_1,z_2)$ and the pole at $ s= \frac{z_1+z_2-2}{2}$ of $\Gamma( -z_1-z_2+2s+2)$. For the pole at $s= \frac{z_1+z_2-1}{2}$ of  $  \HH_s(z_1,z_2)$, we note that $ 
\frac{\Gamma(z_2-s-\frac{1}{2}+\mi t ) }{ \Gamma(s-z_1+\frac{3}{2}+\mi t )  } 
+ 
\frac{\Gamma(z_1-s-\frac{1}{2}-\mi t) }{\Gamma(s-z_2+\frac{3}{2}-\mi t) }$
 has a zero at the same point. Also, the contribution from the pole at $s=\p_i$ in the $s$-integral of  \eqref{IONO}  is
\[
  -\frac{G(\p_i)}{\p_i} g(\p_i,t)    \left( \frac{U}{t} \right)^{4\p_i} 
   \HH_{\p_i} (z_1,z_2)
  \pi^{-4 \p_i} 
  \ll |\p_i|^{-1-B}   U ^{4\Re(\p_j)}  \frac{|\p_i|^{2}}{t} 
 \ll T^{3-B}.  
\]
It then follows from the above bound and \eqref{IONO}  that 
\begin{equation*}
\begin{split}
   I_O &= \frac{2}{(2\pi \mi)^3} \int_{\mathcal{B}_1} \int_{\mathcal{B}_2}  \zeta^4 (z_1) \zeta^4 (z_2) \int_{-\infty}^{\infty}\omega(t)  \int_{(\e_3)} 
\frac{G(s)}{s  \pi^{4s} }   
 g(s,t)  \left( \frac{U}{t} \right)^{4s}  \HH_s(z_1,z_2) \\ 
 & 
\times \Gamma( -z_1-z_2+2s+2)\left( 
\frac{\Gamma(z_2-s-\frac{1}{2}+\mi t ) }{ \Gamma(s-z_1+\frac{3}{2}+\mi t )  } 
+ 
\frac{\Gamma(z_1-s-\frac{1}{2}-\mi t) }{\Gamma(s-z_2+\frac{3}{2}-\mi t) }
   \right)  \, dt \, ds\,  d {z_2} \, d {z_1} \\
   &+  O(1 ) + O \left( \left( \frac{T}{T_0} \right)^{1+C} T^{1-\frac{\e}{2}} \right).
\end{split}
\end{equation*}  
By Lemma \ref{lem:bd-g}(i), we know 
\begin{equation}
\begin{split}
  \label{IOetatilde01}
   I_O &= \frac{2}{(2\pi \mi)^3} \int_{\mathcal{B}_1} \int_{\mathcal{B}_2}  \zeta^4 (z_1) \zeta^4 (z_2) \int_{-\infty}^{\infty}\omega(t)  \int_{(\e_3)} 
\frac{G(s)}{s  \pi^{4s} }   
 \left( \frac{t}{2} \right)^{4s}   \left( \frac{U}{t} \right)^{4s}  \HH_s(z_1,z_2) \\ 
 & 
\times \Gamma( -z_1-z_2+2s+2)\left( 
\frac{\Gamma(z_2-s-\frac{1}{2}+\mi t ) }{ \Gamma(s-z_1+\frac{3}{2}+\mi t )  } 
+ 
\frac{\Gamma(z_1-s-\frac{1}{2}-\mi t) }{\Gamma(s-z_2+\frac{3}{2}-\mi t) }
   \right)  \, dt \, ds\,   d {z_2} \, d {z_1}  \\
     &+ I_{O,E} +  O(1 ) + O \left( \left( \frac{T}{T_0} \right)^{1+C} T^{1-\frac{\e}{2}} \right),
\end{split}
\end{equation}  
where 
\begin{equation}
\begin{split}
  \label{IOetatildeE01}
   I_{O,E} &= \frac{2}{(2\pi \mi)^3} \int_{\mathcal{B}_1} \int_{\mathcal{B}_2}  \zeta^4 (z_1) \zeta^4 (z_2) \int_{-\infty}^{\infty}\omega(t)  \int_{(\e_3)} 
\frac{G(s)}{s  \pi^{4s} }   
 \left( \frac{t}{2} \right)^{4s} O \left( \frac{|s|^2+1}{t}  \right)   \left( \frac{U}{t} \right)^{4s}  \HH_s(z_1,z_2) \\ 
 & 
\times \Gamma( -z_1-z_2+2s+2)\left( 
\frac{\Gamma(z_2-s-\frac{1}{2}+\mi t ) }{ \Gamma(s-z_1+\frac{3}{2}+\mi t )  } 
+ 
\frac{\Gamma(z_1-s-\frac{1}{2}-\mi t) }{\Gamma(s-z_2+\frac{3}{2}-\mi t) }
   \right)  \, dt \, ds\,   d {z_2} \, d {z_1}. 
    \end{split}
\end{equation}  
We shall give an upper bound for $I_{O,E}$. Observing that for $0<\delta < \e_3<0.15$, then $\Re(s-z_i + 1) \in [\e_3 - \delta, \e_3 + \delta] \subseteq [0,0.4]$, $i=1,2$. It implies 
$\Re(s-z_1 + 1 + s-z_2 + 1) \leq 0.8$.  We remind the reader that in \eqref{IOetatildeE01},  $t$ does not denote the imaginary part of $s$ but a variable satisfying $c_1 T \leq t \leq c_2 T$. It follows from   Lemma \ref{Stirling}(ii) and \eqref{M-gammazbd} that the inner integral in \eqref{IOetatildeE01} on $|\Im(s-z_1 + 1)| \geq t+1$ or $|\Im(s-z_2+ 1)| \geq t+1$ is  
\begin{equation}
  \begin{split}
 \label{IOetatildeE02}
 & \ll \int_{\substack{\Re(s) = \e_3 \\   |\Im(s-z_1 + 1)| \geq t+1, \\ \text{or }  |\Im(s-z_2+ 1)| \geq t+1}  }    
 \frac{|G(s)|}{|s|} 
  O \left(U^{4 \e_3} \frac{|s|^2}{t}  \right) |\HH_s(z_1,z_2)|
 (|2s- z_1 -z_2|+1)^{2} 
 e^{-\frac{\pi}{2} |\Im(2s- z_1 -z_2)|}\\
 &\times \frac{|\Im(s-z_1+1)|^2 + |\Im(s-z_2+1)|^2 }{t^2} e^{\frac{\pi}{2} |\Im(2s-z_1 -z_2 +2)|} \,d|s| \\
 &\ll  U^{4 \e_3} 
 \int_{\substack{\Re(s) = \e_3 \\   |\Im(s-z_1 + 1)| \geq t+1, \\ \text{or }  |\Im(s-z_2+ 1)| \geq t+1}  } 
  |G(s)| |s|^5
  |\HH_s(z_1,z_2)| \,d|s|\\
  &\ll  T^{(1 - \varepsilon)4 \e_3} 
  \int_{\substack{\Re(s) = \e_3 \\   |\Im(s-z_1 + 1)| \geq t+1, \\ \text{or }  |\Im(s-z_2+ 1)| \geq t+1}  } 
   |s|^{-10} \,d|s|\\
  &\ll T^{-5}.
  \end{split}
  \end{equation}
In the second last inequality above, we used the upper bound that for $\Re(s) = \e_3$ and $0<\delta < \e_3$,
  \[
   |\HH_s(z_1,z_2)|  \ll |\zeta(2s + 2 -z_1 -z_2)| \frac{1}{\e_3^{16}} \frac{1}{2\e_3 - 2\delta} 
   \frac{1}{(2 \e_3 - \delta)^4} \frac{1}{(2 \e_3 - \delta)^4} \ll |s|+1,
  \]
   which can be deduced from \eqref{HHs1} and Proposition \ref{Hlemma}.
By Lemma \ref{Stirling}(i) and the bound $\Re(s-z_1 + 1 + s-z_2 + 1) \leq 0.8$,  the inner integral on $|\Im(s-z_i + 1)| < t+1,\, i=1,2$ in \eqref{IOetatildeE01} is
\begin{equation}
  \label{IOetatildeE03}
 \begin{split}
 &\ll \int_{\substack{\Re(s) = \e_3 \\   |\Im(s-z_i + 1)| < t+1,\, i=1,2}  }   \frac{|G(s)|}{|s|} 
 O \left(U^{4 \e_3}  \frac{|s|^2+1 }{t}  \right) |\HH_s(z_1,z_2)|
 ( |2s- z_1 -z_2|+1)^{2} 
   e^{-\frac{\pi}{2} |\Im(2s- z_1 -z_2)|}\\
 &\times \left( t^{-2s+ z_1 + z_2 - 2} \cos 
  ( \tfrac{\pi}{2} (2s -z_1 -z_2 +2)) 
  +O \left(  t^{-2\e_3+ 2\delta}   e^{\frac{\pi}{2} |\Im(2s-z_1-z_2)|} \tfrac{1 +|s-z_1 + 1|^2  +|s-z_2 + 1|^2 }{t} \right)\right) \, d|s|\\
 &\ll \int_{\substack{\Re(s) = \e_3 \\  |\Im(s-z_i + 1)| < t+1,\, i=1,2}  }    \frac{|G(s)|}{|s|} 
  O \left(U^{4 \e_3} \frac{|s|^2+1 }{t}  \right) |\HH_s(z_1,z_2)|
  ( |2s- z_1 -z_2|+1)^{2} 
   e^{-\frac{\pi}{2} |\Im(2s- z_1 -z_2)|}\\
 &\times e^{\frac{\pi}{2} |\Im(2s -z_1 -z_2 )|} t^{-2 \e_3 + 2\delta}  \left( 1 +\tfrac{1 +|s-z_1 + 1|^2  +|s-z_2 + 1|^2 }{t} \right) \, d|s| \\
 &\ll  T^{-1 + 2\e_3+ 2\delta -  4 \e_3\varepsilon }.
   \end{split}
  \end{equation}
  In the second inequality above, we used the fact that for $z \in \mathbb{C}$,
  \[
  |\cos (\tfrac{\pi}{2} z) | \ll e^{\frac{\pi}{2} |\Im (z)|}.
  \]
Thus, substituting \eqref{IOetatildeE02} and  \eqref{IOetatildeE03} in \eqref{IOetatildeE01}, we have 
$ I_{O,E} \ll  T^{ 2\e_3 + 2\delta - 4 \e_3\varepsilon } \ll T^{0.5}$, provided that $0<\e_3 <0.15$ and $0<\delta<\frac{1}{10}$, which, together with  \eqref{IOetatilde01}, yields
\begin{equation}
\begin{split}
   & I_O  \\ &=   
\frac{2}{(2\pi \mi)^2} \int_{\mathcal{B}_1} \int_{\mathcal{B}_2} \zeta^4 (z_1) \zeta^4 (z_2)   \frac{1}{2 \pi \mi} \int_{(\e_3)} 
\frac{G(s)}{s }  \HH_s(z_1,z_2) \\
&\times    
 \int_{-\infty}^{\infty}
  \left( \frac{U}{2 \pi } \right)^{4s}
 \omega(t)  
\left( \frac{\Gamma(z_2-s-\frac{1}{2}+\mi t ) }{ \Gamma(s-z_1+\frac{3}{2}+\mi t )  } 
+ \frac{\Gamma(z_1-s-\frac{1}{2}-\mi t)}{\Gamma(s-z_2+\frac{3}{2}-\mi t) }
   \right)   \Gamma( -z_1-z_2+2s+2)  \, dt \, ds\,   d {z_2} \, d {z_1} \\
   &+ O( T^{ 0.5 })  + O \left( \left( \frac{T}{T_0} \right)^{1+C} T^{1-\frac{\e}{2}} \right).
   \end{split}
   \label{IO_Mid}
\end{equation}

\section{Further evaluation of off-diagonal terms: Proof of Proposition \ref{offdiagonal}}\label{feval-off} 
In this section, we shall give a further simplification of $I_O$, and extract one main term from there.
By a rearrangement for \eqref{IO_Mid}, we write
\begin{align}
\label{IO-Mid01}
 I_O =  \int_{-\infty}^{\infty} \omega(t) 
 \frac{2}{2 \pi \mi} \int_{(\e_3)} 
\frac{G(s)}{s }  \left( \frac{U}{2 \pi } \right)^{4s} \mathscr{I}(s,t) \, ds \, dt + O( T^{ 0.5 })  + O \left( \left( \frac{T}{T_0} \right)^{1+C} T^{1-\frac{\e}{2}} \right),
\end{align}
where 
\begin{equation*}
\begin{split}
   \mathscr{J}(s,t) & :=
\frac{1}{(2\pi \mi)^2} \int_{\mathcal{B}_1} \int_{\mathcal{B}_2} \zeta^4 (z_1) \zeta^4 (z_2) \HH_s(z_1,z_2)
\left( 
\frac{\Gamma(z_2-s-\frac{1}{2}+\mi t ) }{ \Gamma(s-z_1+\frac{3}{2}+\mi t )  } 
+ 
\frac{\Gamma(z_1-s-\frac{1}{2}-\mi t)}{\Gamma(s-z_2+\frac{3}{2}-\mi t) }
   \right) \\
  & \times \Gamma( -z_1-z_2+2s+2)   \, d {z_2} \, d {z_1}  \\
  & = \frac{1}{2 \pi \mi } \int_{\mathcal{B}_2} \zeta^4(z_2) \times \text{inn}(z_2) \, dz_2 ,
\end{split}
\end{equation*}
\begin{align*}
  \text{inn}(z_2)   & := 
\frac{1}{2\pi \mi} \int_{\mathcal{B}_1}   \zeta^4 (z_1)   \mathcal{F}_{s,t}(z_1,z_2)
   \, d {z_1}
   = \frac{1}{2\pi \mi} \int_{\mathcal{B}_1}  \frac{1}{(z_1-1)^4}  ( (z_1-1) \zeta(z_1))^4   \mathcal{F}_{s,t}(z_1,z_2) \, dz_1,
\end{align*}
and
\[
   \mathcal{F}_{s,t}(z_1,z_2) := \HH_s(z_1,z_2)
\left( 
\frac{\Gamma(z_2-s-\frac{1}{2}+\mi t ) }{ \Gamma(s-z_1+\frac{3}{2}+\mi t )  } 
+ 
\frac{\Gamma(z_1-s-\frac{1}{2}-\mi t)}{\Gamma(s-z_2+\frac{3}{2}-\mi t) }
   \right)   \Gamma( -z_1-z_2+2s+2) .
\]
Setting $H(z_1) = ( (z_1-1) \zeta(z_1))^4 \mathcal{F}_{s,t}(z_1,z_2)$, we have 
\begin{align*}
     \frac{1}{2\pi \mi} \int_{\mathcal{B}_1}  \frac{1}{(z_1-1)^4}  H(z_1) \, dz_1 = \frac{H^{(3)}(1)}{3!}
     =  \sum_{i'+i=3}  \frac{b_{i'} \mathcal{F}_{s,t}^{(i,0)}(1,z_2)}{i!},
\end{align*} 
where $b_{i'} \in \R$ satisfy
\begin{equation*}
h(z) := ((z-1) \zeta(z))^4  = 1 + b_{1}(z-1) + b_{2} (z-1)^{2} + \cdots.
\end{equation*}  
Here 
\[
\mathcal{F}_{s,t}^{(i,j)}(z_1,z_2) :=  \frac{\partial^{i}}{\partial z_1^i}
       \frac{\partial^{j}}{\partial z_2^j} \mathcal{F}_{s,t}(z_1,z_2).
\]
We now compute $  \mathcal{F}_{s,t}^{(i,0)}(z_1,z_2)$ for $i \in \{0, 1,2,3\}$. By the generalized product rule, we obtain
\begin{equation}
\begin{split}
  \label{Fi01z2}
  &  \mathcal{F}_{s,t}^{(i,0)}(1,z_2) \\ & = 
    \sum_{u+v+w=i}  \binom{i}{u,v,w} 
    \HH_s^{(u,0)}(z_1,z_2) \Big|_{z_1=1}
       \frac{\partial^{v}}{\partial z_1^v}
    \left.   \left( 
\frac{\Gamma(z_2-s-\frac{1}{2}+\mi t ) }{ \Gamma(s-z_1+\frac{3}{2}+\mi t )  } 
+ 
\frac{\Gamma(z_1-s-\frac{1}{2}-\mi t)}{\Gamma(s-z_2+\frac{3}{2}-\mi t) }
   \right)   \right|_{z_1=1}  \\
   & \times (-1)^w  \Gamma^{(w)}( -z_2+2s+1) .
\end{split}
\end{equation}
Observe that by Proposition  \ref{Hlemma}(i),  for $\Re(s) > \frac{1}{2} + 4\delta $ and $u \in \mathbb{Z}_{\ge 0}$, we have
\begin{equation}
   \label{Hu0}
      \HH_s^{(u,0)}(z_1,z_2) \Big|_{z_1=1}
       = \sum_{i_1+i_2=u} \binom{u}{i_1}  
       \left(
       \sum_{r=1}^{\infty}
\sum_{q=1}^{\infty} \frac{c_q(r) G_4^{(i_1)} (1,q) G_4(z_2,q)}{q^{1 + z_2} r^{2s+1-z_2}} (\log(\tfrac{r}{q}))^{i_2}
\right).
\end{equation}
Hence, we have
\begin{align*}
&  \mathscr{J}(s,t) \\& = \frac{1}{2 \pi \mi}\int_{\mathcal{B}_2} \zeta(z_2)^4 \left(  \frac{1}{2\pi \mi} \int_{\mathcal{B}_1}  \frac{1}{(z_1-1)^4}  H(z_1) \, dz_1   \right) \, dz_2 
    =\frac{1}{2 \pi \mi}\int_{\mathcal{B}_2} \zeta(z_2)^4   \sum_{i'+i=3}  \frac{b_{i'} \mathcal{F}_{s,t}^{(i,0)}(1,z_2)}{i!} \, dz_2 \\
   & =  \sum_{i'+i=3}  \frac{b_{i'}}{i!}  \frac{1}{2 \pi \mi}  \int_{\mathcal{B}_2}   \frac{1}{(z_2-1)^4}  ( (z_2-1) \zeta(z_2))^4  \mathcal{F}_{s,t}^{(i,0)}(1,z_2) \, dz_2 
    =  \sum_{i'+i=3}  \frac{b_{i'}}{i!} 
   \sum_{j'+j=3}   \frac{b_{j'}}{j!} 
   \mathcal{F}_{s,t}^{(i,j)}(1,1).
\end{align*}
 From  \eqref{Fi01z2}, it follows that
\begin{equation*}
\begin{split}
    \mathcal{F}_{s,t}^{(i,j)}(1,1)  = 
    \sum_{u+v+w=i} & \binom{i}{u,v,w} 
    \sum_{a+b+c=j}
    \binom{j}{a,b,c}
    \HH_s^{(u,a)}(1,1)  
      \mathcal{G}_{s,t}^{(v,b)}(1,1)
     (-1)^{w+c}  \Gamma^{(w+c)}(2s), 
\end{split}
\end{equation*}
where 
\begin{equation}
  \label{Gbvs}
   \mathcal{G}_{s,t}^{(v,b)}(1,1)=  \frac{\partial^{v}}{\partial z_1^v}
   \frac{\partial^{b}}{\partial z_2^b}
  \left.  \left( 
\frac{\Gamma(z_2-s-\frac{1}{2}+\mi t ) }{ \Gamma(s-z_1+\frac{3}{2}+\mi t )  } 
+ 
\frac{\Gamma(z_1-s-\frac{1}{2}-\mi t)}{\Gamma(s-z_2+\frac{3}{2}-\mi t) }
   \right)   \right|_{z_1=z_2=1}.
\end{equation}
By \eqref{Hu0}, we also know for $\Re(s) > \frac{1}{2} + 4\delta $, 
\begin{align}
\begin{split}
   \label{Hua}
      \HH_s^{(u,a)}(1,1) 
& = \sum_{i_1+i_2=u} \binom{u}{i_1}  \sum_{j_1+j_2=a} \binom{a}{j_1}  
       \left(
       \sum_{r=1}^{\infty}
\sum_{q=1}^{\infty} \frac{c_q(r) G_4^{(i_1)} (1,q) G_4^{(j_1)}(1,q)}{q^{2} r^{2s}} 
(\log(\tfrac{r}{q}))^{i_2+j_2}
\right) \\
 & = \sum_{i_1+i_2=u} \binom{u}{i_1}   \sum_{j_1+j_2=a} \binom{a}{j_1}  
         \mathscr{H}^{(i_1,j_1,i_2+j_2,0)}(0,0,0,s),  
\end{split}
\end{align}
where $\mathscr{H}$ is defined as in \eqref{curlyH}.  Note that the above identity is also valid in the region $\Re(s) > -\frac{1}{4} +2\delta $ since $\HH_s^{(u,a)}(1,1) $ can be meromorphically   extended to $\Re(s) >- \frac{1}{4} +2\delta $ by Proposition \ref{Hlemma} and the meromorphic continuation is unique.
We then arrive at
\begin{align*}
 & \mathscr{J}(s,t) \\
 &   =   \sum_{i'+i=3}  \frac{b_{i'}}{i!} 
   \sum_{j'+j=3}   \frac{b_{j'}}{j!} 
   \mathcal{F}_{s,t}^{(i,j)}(1,1)  \\
   & =   \sum_{i'+i=3}  \frac{b_{i'}}{i!} 
   \sum_{j'+j=3}   \frac{b_{j'}}{j!} \sum_{u+v+w=i}  \binom{i}{u,v,w} 
    \sum_{a+b+c=j}
    \binom{j}{a,b,c}
    \HH_s^{(u,a)}(1,1)  
       \mathcal{G}_{s,t}^{(v,b)}(1,1)
     (-1)^{w+c}  \Gamma^{(w+c)}(2s) .
\end{align*}
Thus, combined with \eqref{IO-Mid01}, it follows that
\begin{align}   \label{I-O-g}
 \begin{split}
 I_O &=  \int_{-\infty}^{\infty} \omega(t) 
 \frac{2}{2 \pi \mi} \int_{(\e_3)} 
\frac{G(s)}{s }  \left( \frac{U}{2 \pi } \right)^{4s}  \sum_{i'+i=3}  \frac{b_{i'}}{i!} 
   \sum_{j'+j=3}   \frac{b_{j'}}{j!} \sum_{u+v+w=i}  \binom{i}{u,v,w} 
    \sum_{a+b+c=j}
    \binom{j}{a,b,c}\\
  & \times \left(  
     \HH_s^{(u,a)}(1,1)  
      \mathcal{G}_{s,t}^{(v,b)}(1,1)
     (-1)^{w+c}  \Gamma^{(w+c)}(2s) 
\right)
 \, ds \, dt  +  O( T^{ 0.5 }) +O \left( \left( \frac{T}{T_0} \right)^{1+C} T^{1-\frac{\e}{2}} \right)\\
 & = 2 \sum_{i'+i=3}  \frac{b_{i'}}{i!} 
   \sum_{j'+j=3}   \frac{b_{j'}}{j!} \sum_{u+v+w=i}  \binom{i}{u,v,w} 
    \sum_{a+b+c=j}
    \binom{j}{a,b,c} \\
   &  \times   \int_{-\infty}^{\infty} \omega(t) 
 \frac{1}{2 \pi \mi} \int_{(\e_3)} 
\frac{G(s)}{s }  \left( \frac{U}{2 \pi } \right)^{4s}  \HH_s^{(u,a)}(1,1)  
 \mathcal{G}_{s,t}^{(v,b)}(1,1)
     (-1)^{w+c}  \Gamma^{(w+c)}(2s)  \, ds \, dt   \\
     & + O( T^{ 0.5 }) + O \left( \left( \frac{T}{T_0} \right)^{1+C} T^{1-\frac{\e}{2}} \right).
  \end{split}
\end{align}
The last $s$-integral can be further evaluated by the following lemma.
\begin{lem}
\label{lem:iX}
Let $0<\delta < \frac{1}{10}$ and $0<\delta<\e_3<0.15$. Let $u,a,v,b,w,c$ be non-negative integers such that $a+b+c \leq 3$ and $u+v+w \leq 3$.  For ${\bf x} = (u,a,v,b,w,c)$, we set 
\begin{equation*}
   i_{{\bf x}} := \frac{1}{2 \pi \mi} \int_{(\e_3)} 
\frac{G(s)}{s }  \left( \frac{U}{2 \pi } \right)^{4s}  \HH_s^{(u,a)}(1,1)  
 \mathcal{G}_{s,t}^{(v,b)}(1,1)
     (-1)^{w+c}  \Gamma^{(w+c)}(2s)  \, ds .
\end{equation*}
Then we have 
\begin{align}
\label{iden-ix}
 i_{{\bf x}} = i_{{\bf x},0}  + i_{{\bf x},1}  + O\left( \left( \frac{U^2}{t} \right)^{-\frac{1}{2} + 4\delta} \right),
\end{align}
where
\begin{equation}
\label{i0}
\begin{split}
  & i_{{\bf x},0} \\
   &:=  \sum_{i_1+i_2=u} \binom{u}{i_1}  \sum_{j_1+j_2=a} \binom{a}{j_1}  
     \sum_{x_0 =0}^{i_2 + j_2} \sum_{\substack{x_1 + x_2 + x_3 =i_1 \\x_1,x_2,x_3  \geq 0}}   \sum_{\substack{y_1 + y_2 + y_3 =j_1 \\y_1,y_2,y_3 \geq 0}}
      {i_2 +j_2 \choose x_0} (-1)^{x_0 + x_1 + y_1}   
      { i_1 \choose x_1,x_2,x_3}  \\
    & \times { j_1 \choose y_1,y_2,y_3}   (-1)^{b+v}
   \sum_{\substack{r_1' \in \{0,\ldots, v\} \\  r_2' \in \{0,\ldots, b\} \\ v+b-r_1'-r_2'  \equiv 0\, (\operatorname{mod} 2) }}  (-1)^{r_1' + r_2'} {v \choose r_1'} {b \choose r_2'} (\log t)^{r_1'+r_2'}   \left(  \mi\frac{\pi}{2}\right)^{v+b-r_1'-r_2'}  \\
 & \times  \frac{1}{2^{x_1 +y_1 + x_2 +y_2 +w+c+10}} \frac{1}{(x_1 +y_1 + x_2 +y_2 +w+c+10)!}\\
 &\times \sum_{k = 0}^{x_1 +y_1 + x_2 +y_2 +w+c+10} {x_1 +y_1 + x_2 +y_2 +w+c+10 \choose k} \log^{k} \left(\frac{U^4} {t^2}\right)  J_0^{(x_1 +y_1 + x_2 +y_2 +w+c+10-k)} (0),
 \end{split}
\end{equation}
and
\begin{align*}
&   i_{{\bf x},1}\\ 
&:=  \sum_{i_1+i_2=u} \binom{u}{i_1} \sum_{j_1+j_2=a} \binom{a}{j_1}  
     \sum_{x_0 =0}^{i_2 + j_2} \sum_{\substack{x_1 + x_2 + x_3 =i_1 \\x_1,x_2,x_3  \geq 0}}   \sum_{\substack{y_1 + y_2 + y_3 =j_1 \\y_1,y_2,y_3 \geq 0}}
      {i_2 +j_2 \choose x_0} (-1)^{x_0+x_1 + y_1}   
      { i_1 \choose x_1,x_2,x_3}   \\
    &  \times {j_1 \choose y_1,y_2,y_3}   (-1)^{b+v}
   \sum_{\substack{r_1' \in \{0,\ldots, v\} \\  r_2' \in \{0,\ldots, b\} \\ v+b-r_1'-r_2'  \equiv 1\, (\operatorname{mod} 2) }}  (-1)^{r_1' + r_2'} {v \choose r_1'} {b \choose r_2'} (\log t)^{r_1'+r_2'}   \left(  \mi\frac{\pi}{2}\right)^{v+b-r_1'-r_2'}  \\
    &\times \frac{\pi}{2^{x_1 +y_1 + x_2 +y_2 +w+c+10}} \frac{1}{(x_1 +y_1 + x_2 +y_2 +w+c+9)!}\\
    &  \times 
 \sum_{k = 0}^{x_1 +y_1 + x_2 +y_2 +w+c+9} {x_1 +y_1 + x_2 +y_2 +w+c+9 \choose k} \log^{k} \left(\frac{U^4} {t^2}\right)  J_1^{(x_1 +y_1 + x_2 +y_2 +w+c+9-k)} (0).
\end{align*}
Here, $J_0(s):=J_0(s; x_0, x_1, y_1, x_2, y_2, x_3,y_3, i_2,j_2,w,c)$ \\and $J_1(s):=J_1(s; x_0, x_1, y_1, x_2, y_2, x_3,y_3, i_2,j_2,w,c)$ are defined by 
\begin{align*}
J_0(s) &:= G(s) \frac{1}{(2\pi)^{4s}}\zeta(1+2s)^2(2s)^2  g_{x_2}(s) (2s)^{x_2+3} g_{y_2}(s) (2s)^{y_2 + 3} \zeta^{(x_1+y_1)} (1+2s)  (2s)^{x_1 + y_1 +1 } \\
&\times2\cos(\pi s) (-1)^{w+c} \Gamma^{(w+c)}(2s) (2s)^{w+c+1}\mathscr{I}^{(x_3,y_3,i_2+j_2-x_0,0)}(0,0, 0,s) \zeta^{(x_0)} (2s )  ,
\end{align*}
and
\begin{align*}
J_1(s) &:= G(s) \frac{1}{(2\pi)^{4s}}  \zeta(1+2s)^2(2s)^2  g_{x_2}(s) (2s)^{x_2+3} g_{y_2}(s) (2s)^{y_2 + 3} \zeta^{(x_1+y_1)} (1+2s)  (2s)^{x_1 + y_1 +1 }\\
&\times   \frac{2\mi \sin(\pi s)}{\pi s} (-1)^{w+c} \Gamma^{(w+c)}(2s) (2s)^{w+c+1}  \mathscr{I}^{(x_3,y_3,i_2+j_2-x_0,0)}(0,0, 0,s)\zeta^{(x_0)} (2s ) ,
\end{align*}
where $g_{x_2}, g_{y_2}$ are defined in \eqref{equ:g-function-0123}, and  $\mathscr{I}(0,0, 0,s)$ is defined in \eqref{def-scrI}.

Clearly, both $J_0(s)$ and $J_1(s)$ are holomorphic for $  - \frac{1}{4} +2 \delta< \Re(s) <\frac{1}{2}$, and $J_0(s),J_1(s) \ll \frac{1}{1+|\Im (s)|^{100}}$ uniformly in this region.

In addition,
we have
\begin{equation}
i_{{\bf x},1} \ll (\log T)^{15}   .
\label{ix1}
\end{equation}
\end{lem}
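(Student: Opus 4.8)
The plan is to insert Stirling's formula for the quotient of Gamma factors entering $\mathcal{G}^{(v,b)}_{s,t}(1,1)$, substitute the factorisation of $\HH_s^{(u,a)}(1,1)$ coming from Proposition~\ref{Hlemma}, and then evaluate the remaining $s$-integral by shifting the contour past $s=0$ and collecting a single residue. For the first step, Lemma~\ref{Stirling} (this is the computation already recorded in and around \eqref{M-gammastirlingB}) gives, for $\Re(s)=\e_3$, $c_1T\le t\le c_2T$, $z_i\in\mathcal{B}_i$ and $|\Im(s)|\le t+1$,
\[
\frac{\Gamma(z_2-s-\tfrac12+\mi t)}{\Gamma(s-z_1+\tfrac32+\mi t)}+\frac{\Gamma(z_1-s-\tfrac12-\mi t)}{\Gamma(s-z_2+\tfrac32-\mi t)}=2\,t^{-(2s+2-z_1-z_2)}\cos\!\left(\tfrac{\pi}{2}(2s+2-z_1-z_2)\right)\!\left(1+O\!\left(\tfrac{|s|^2+1}{t}\right)\right),
\]
together with the cruder bound of Lemma~\ref{Stirling}(ii) when $|\Im(s)|>t+1$. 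Writing $t^{-(2s+2-z_1-z_2)}=t^{-2s-2}t^{z_1}t^{z_2}$ and differentiating $v$ times in $z_1$ and $b$ times in $z_2$ before setting $z_1=z_2=1$, the power of $t$ produces factors $\log t$, while the cosine produces powers of $\tfrac{\pi}{2}$ times $\cos(\pi s)$ or $\sin(\pi s)$ according to the parity of the number of derivatives falling on it; the product rule, combined with Vandermonde's identity to split the binomial index as $r_1'+r_2'$, then yields precisely the two sums of the statement, the $v+b-r_1'-r_2'$ even terms carrying the factor $2\cos(\pi s)$ of $J_0$ and the odd terms carrying $\tfrac{2\mi\sin(\pi s)}{\pi s}$ of $J_1$ (after writing $\sin(\pi s)=\pi s\cdot\tfrac{\sin(\pi s)}{\pi s}$). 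The error above, after multiplication by $G(s)/s$ and integration over $\Re(s)=\e_3$, $\mathcal{B}_1$ and $\mathcal{B}_2$, contributes $\ll (U^{2}/t)^{2\e_3}/t$, which for $\e_3<0.15$ is far smaller than $(U^{2}/t)^{-1/2+4\delta}$.

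Next I unfold $\HH_s^{(u,a)}(1,1)$. By \eqref{Hua}, which holds for $\Re(s)>-\tfrac14+2\delta$ by uniqueness of meromorphic continuation, this equals $\sum_{i_1+i_2=u}\binom{u}{i_1}\sum_{j_1+j_2=a}\binom{a}{j_1}\mathscr{H}^{(i_1,j_1,i_2+j_2,0)}(0,0,0,s)$. Into it I insert the factorisation of Proposition~\ref{Hlemma}(i),
\[
\mathscr{H}(u_1,u_2,u_3,s)=\zeta(2s-u_3)\,\frac{\zeta(1+2s)^{16}\,\zeta(1+2s-u_1-u_2)}{\zeta(1+2s-u_1)^{4}\,\zeta(1+2s-u_2)^{4}}\,\mathscr{I}(u_1,u_2,u_3,s),
\]
and apply the generalized product rule: the $i_2+j_2$ derivatives in $u_3$ split, with index $x_0$, between $\zeta(2s-u_3)$ — contributing $(-1)^{x_0}\zeta^{(x_0)}(2s)$ — and $\mathscr{I}$; the $i_1$ derivatives in $u_1$ split, with indices $x_1,x_2,x_3$, between the numerator $\zeta(1+2s-u_1-u_2)$, whose $\partial_{u_1}^{x_1}\partial_{u_2}^{y_1}$-derivative at the origin is $(-1)^{x_1+y_1}\zeta^{(x_1+y_1)}(1+2s)$, the local factor $\zeta(1+2s-u_1)^{-4}$, whose derivatives — combined with a bookkeeping copy of $\zeta(1+2s)$ obtained from the split $\zeta(1+2s)^{16}=\zeta(1+2s)^{2}\cdot\zeta(1+2s)^{7}\cdot\zeta(1+2s)^{7}$ — are packaged as the function $g_{x_2}(s)$ of \eqref{equ:g-function-0123}, and $\mathscr{I}$; the $u_2$-derivatives split symmetrically, with indices $y_1,y_2,y_3$. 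Collecting terms reproduces exactly the nested sums over $i_1,i_2,j_1,j_2,x_0,x_1,x_2,x_3,y_1,y_2,y_3$ appearing in $i_{{\bf x},0}$ and $i_{{\bf x},1}$.

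Having done this, I combine $(U/2\pi)^{4s}$ with the Stirling factor $t^{-2s}$ into $(2\pi)^{-4s}(U^{4}/t^{2})^{s}$ and regroup. For each even-parity term the integrand of $i_{{\bf x}}$ becomes $2\,J_0(s)\,(U^{4}/t^{2})^{s}\,(2s)^{-N-1}$ times the $r_1',r_2'$-dependent constants, where $N=x_1+y_1+x_2+y_2+w+c+10$: indeed $N+1=1+\bigl(2+(x_2+3)+(y_2+3)+(x_1+y_1+1)+(w+c+1)\bigr)$, the leading $1$ coming from $\tfrac1s=\tfrac2{2s}$ and the rest from the powers of $2s$ attached to the successive factors of $J_0$, and the Laurent expansions $\zeta(1+w)=w^{-1}+\gamma+\cdots$, $\Gamma(w)=w^{-1}-\gamma+\cdots$ confirm that each bracketed factor of $J_0$ is holomorphic and nonvanishing at $s=0$ with $\ll(1+|\Im s|)^{-100}$ on $-\tfrac14+2\delta<\Re(s)<\tfrac12$. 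For the odd-parity terms the simple zero of $\sin(\pi s)$ lowers the pole order by one, which is why $J_1$ carries $\tfrac{2\mi\sin(\pi s)}{\pi s}$ and the factorial in $i_{{\bf x},1}$ is $(N-1)!$ rather than $N!$. I then move the $s$-contour from $\Re(s)=\e_3$ left to $\Re(s)=-\tfrac14+2\delta$: inside the strip $-\tfrac14+2\delta<\Re(s)<\e_3$ the only pole of the integrand is at $s=0$, because the poles $s=\tfrac12$ and $s=\tfrac{\rho-1}{2}=-\tfrac14+\tfrac{\mi\gamma}{2}$ of $\HH_s^{(u,a)}(1,1)$ (Proposition~\ref{Hlemma}), the poles $s=-\tfrac12,-1,\dots$ of $\Gamma^{(w+c)}(2s)$, and every singularity of $\mathscr{I}$ (again Proposition~\ref{Hlemma}(ii)) lie outside it, while the horizontal segments vanish by the $|\Im s|^{-100}$ decay of $J_0$ and $J_1$. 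On the new line $|(U^{4}/t^{2})^{s}|=(U^{2}/t)^{-1/2+4\delta}$, so the shifted integral is $O((U^{2}/t)^{-1/2+4\delta})$. Computing the residue at $s=0$ by the formula $\mathrm{Res}_{s=0}\frac{K(s)A^{s}}{2^{M}s^{M}}=\frac{1}{2^{M}(M-1)!}\sum_{k}\binom{M-1}{k}(\log A)^{k}K^{(M-1-k)}(0)$ with $A=U^{4}/t^{2}$, taking $K=J_0$, $M=N+1$ for the even-parity terms and $K=J_1$, $M=N$ for the odd-parity ones, and carrying out the routine tracking of the $\mi$- and $(-1)$-signs, reproduces $i_{{\bf x},0}$ and $i_{{\bf x},1}$; this establishes \eqref{iden-ix}.

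Finally, for \eqref{ix1}: the only logarithmic factors in $i_{{\bf x},1}$ are $(\log t)^{r_1'+r_2'}$ and $\log^{k}(U^{4}/t^{2})$ with $k\le x_1+y_1+x_2+y_2+w+c+9$, and since $t\asymp T$, $U^{4}/t^{2}\asymp T^{O(1)}$ and $J_1^{(\cdot)}(0)\ll1$, the term is $\ll(\log T)^{r_1'+r_2'+x_1+y_1+x_2+y_2+w+c+9}$. The parity restriction forces $v+b-r_1'-r_2'$ to be odd, hence $\ge1$, so $r_1'+r_2'\le v+b-1$; and the hypotheses $u+v+w\le3$, $a+b+c\le3$ give $x_1+x_2\le i_1\le u\le3-v-w$ and $y_1+y_2\le j_1\le a\le3-b-c$. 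Hence the exponent is at most $(v+b-1)+(3-v-w)+(3-b-c)+(w+c)+9=14$, so $i_{{\bf x},1}\ll(\log T)^{14}\ll(\log T)^{15}$. (The same count with $10$ in place of $9$ and without the parity restriction gives $i_{{\bf x},0}\ll(\log T)^{16}$, which is what produces the main term of $I_O$.) The main obstacle throughout is organisational rather than conceptual: carrying the roughly ten nested summation indices of the second step correctly, and verifying from the Laurent expansions and the definition of $g_{j}$ in \eqref{equ:g-function-0123} that the displayed factors of $J_0$ and $J_1$ really are the holomorphic, rapidly decaying functions claimed in the statement — the analytic ingredients (Stirling, the contour shift, the residue computation) being entirely standard.
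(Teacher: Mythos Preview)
Your proposal is correct and follows essentially the same approach as the paper: expand $\mathcal{G}_{s,t}^{(v,b)}(1,1)$ via Stirling (the paper packages this as Lemma~\ref{FancyG}), expand $\HH_s^{(u,a)}(1,1)$ via Proposition~\ref{Hlemma} and the product rule (packaged as Lemma~\ref{HHHFinal}), then shift the $s$-contour left to $\Re(s)=-\tfrac14+2\delta$ and collect the residue at $s=0$. The paper separates out the $|\Im(s)|>t+1$ tail and the Stirling error as distinct pieces $O(T^{-2})$ and $j_{{\bf x},\mathrm{error}}\ll T^{-1/2}$, while you fold these into a single line; your mention of ``integration over $\mathcal{B}_1,\mathcal{B}_2$'' for the error is presumably the Cauchy-integral bound on the $z_i$-derivatives of the Stirling remainder (this is exactly what Lemma~\ref{FancyG} does, over circles $C(0,\delta)$), and the reference to Vandermonde is unnecessary since the double sum over $r_1',r_2'$ arises directly from differentiating separately in $z_1$ and $z_2$. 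Your bound on $i_{{\bf x},1}$ is in fact sharper than the paper's: by using the parity constraint $v+b-r_1'-r_2'\ge1$ you get exponent $\le14$, whereas the paper ignores parity and stops at $15$.
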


\begin{proof}
From Lemmata \ref{FancyG}(ii) and \ref{HHHFinal} in the next section (which give a further computation for $\mathcal{G}_{s,t}^{(v,b)}(1,1)$ and $\HH_s^{(u,a)}(1,1)$), it follows that 
 the $|\Im(s)| > t+1$ part of $ i_{{\bf x}} $ is bounded by 
\begin{equation*}
\begin{split}
&\frac{1}{2 \pi \mi} \int_{\substack{\Re(s) = \e_3\\ |\Im(s)|>t+1}} 
\frac{G(s)}{s }  \left( \frac{U}{2 \pi } \right)^{4s}  \HH_s^{(u,a)}(1,1)  
 \mathcal{G}_{s,t}^{(v,b)}(1,1)
     (-1)^{w+c}  \Gamma^{(w+c)}(2s)  \, ds\\
  & \ll
    \sum_{i_1+i_2=u} \binom{u}{i_1}   \sum_{j_1+j_2=a} \binom{a}{j_1}  
     \sum_{x_0 =0}^{i_2 + j_2} \sum_{\substack{x_1 + x_2 + x_3 =i_1 \\x_1,x_2,x_3 \geq 0}}   \sum_{\substack{y_1 + y_2 + y_3 =j_1 \\y_1,y_2,y_3 \geq 0}}
     {i_2 +j_2 \choose x_0} (-1)^{x_0+x_1 + y_1}   { i_1 \choose x_1,x_2,x_3}  \\
    &\times  { j_1 \choose y_1,y_2,y_3} \frac{1}{2 \pi \mi} \int_{(\e_3)} 
\frac{G(s)}{s }  \left( \frac{U}{2 \pi } \right)^{4s} 
 {\zeta (1+2s) ^{2}}
  \zeta^{(x_0)} (2s )     
      g_{x_2} (s)  g_{y_2}(s)   \zeta^{(x_1+y_1)} ( 1+ 2s)
\\
&\times  \mathscr{I}^{(x_3,y_3,i_2+j_2-x_0,0)}(0,0, 0,s)
 O \left( \frac{\Im(s)^2}{t^2}e^{\pi|\Im(s)|}  \right)
     (-1)^{w+c}  \Gamma^{(w+c)}(2s)  \, ds\\
     &\ll T^{-2},
     \end{split}
\end{equation*}
 due to the rapid decay of $|G(s)|$. 

Therefore, it suffices to consider the $|\Im(s)| \le t+1$ portion of $ i_{{\bf x}} $. By Lemmata \ref{FancyG}(i) and \ref{HHHFinal}, together with the above bound,
we  can write  
\begin{align*}
 i_{{\bf x}}  =  j_{{\bf x},0}  + j_{{\bf x},1} + j_{{\bf x},\mathrm{error}} + O(T^{-2}),
\end{align*}
where 
\begin{align} \label{i-x-0}
 \begin{split}
  & j_{{\bf x},0}\\
    &:=  \sum_{i_1+i_2=u} \binom{u}{i_1} \sum_{j_1+j_2=a} \binom{a}{j_1}  
     \sum_{x_0 =0}^{i_2 + j_2} \sum_{\substack{x_1 + x_2 + x_3 =i_1 \\x_1,x_2,x_3  \geq 0}}   \sum_{\substack{y_1 + y_2 + y_3 =j_1 \\y_1,y_2,y_3 \geq 0}}
      {i_2 +j_2 \choose x_0} (-1)^{x_0+x_1 + y_1}   
       { i_1 \choose x_1,x_2,x_3} \\
    & \times  {j_1 \choose y_1,y_2,y_3} (-1)^{b+v}
   \sum_{\substack{r_1' \in \{0,\ldots, v\} \\  r_2' \in \{0,\ldots, b\} \\ v+b-r_1'-r_2'  \equiv 0\, (\operatorname{mod} 2) }}  (-1)^{r_1' + r_2'} {v \choose r_1'} {b \choose r_2'} (\log t)^{r_1'+r_2'}   \left(  \mi\frac{\pi}{2}\right)^{v+b-r_1'-r_2'}  \\
& \times \frac{1}{2 \pi \mi} \int_{\substack{\Re(s) = \e_3\\ |\Im(s)| \leq t+1}} 
\frac{1}{s }  \left( \frac{U^2}{t } \right)^{2s}    \frac{1}{(2s)^{2 + x_2 +3 +y_2+3 +x_1 +y_1+1+w+c+1}} J_0(s)\,ds,
 \end{split}
\end{align}
\begin{align}\label{i-x-1}
 \begin{split}
  & j_{{\bf x},1}\\
  &:= \sum_{i_1+i_2=u} \binom{u}{i_1}   \sum_{j_1+j_2=a} \binom{a}{j_1}  
     \sum_{x_0 =0}^{i_2 + j_2} \sum_{\substack{x_1 + x_2 + x_3 =i_1 \\x_1,x_2,x_3  \geq 0}}   \sum_{\substack{y_1 + y_2 + y_3 =j_1 \\y_1,y_2,y_3 \geq 0}}
      {i_2 +j_2 \choose x_0} (-1)^{x_0+x_1 + y_1}   
      { i_1 \choose x_1,x_2,x_3}   \\
    &  \times  { j_1 \choose y_1,y_2,y_3}  (-1)^{b+v}
   \sum_{\substack{r_1' \in \{0,\ldots, v\} \\  r_2' \in \{0,\ldots, b\} \\ v+b-r_1'-r_2'  \equiv 1\, (\operatorname{mod} 2) }}  (-1)^{r_1' + r_2'} {v \choose r_1'} {b \choose r_2'} (\log t)^{r_1'+r_2'}   \left(  \mi\frac{\pi}{2}\right)^{v+b-r_1'-r_2'}   \\ 
& \times \frac{1}{2 \pi \mi} \int_{\substack{\Re(s) = \e_3\\ |\Im(s)| \leq t+1}} 
\frac{1}{s }  \left( \frac{U^2}{t} \right)^{2s}  \frac{\pi s}{(2s)^{2 + x_2 +3 +y_2+3 +x_1 +y_1 +1+w+c+1}} J_1(s) \,ds,
 \end{split}
\end{align}
and 
\begin{align*}
&j_{{\bf x},\mathrm{error}}\\
 &:= \sum_{i_1+i_2=u} \binom{u}{i_1}   \sum_{j_1+j_2=a} \binom{a}{j_1}  
     \sum_{x_0 =0}^{i_2 + j_2} \sum_{\substack{x_1 + x_2 + x_3 =i_1 \\x_1,x_2,x_3 \geq 0}}   \sum_{\substack{y_1 + y_2 + y_3 =  j_1 \\y_1,y_2,y_3 \geq 0}}
      {i_2 +j_2 \choose x_0} (-1)^{x_0+x_1 + y_1}   { i_1 \choose x_1,x_2,x_3}\\ 
    &\times   { j_1 \choose y_1,y_2,y_3}  \frac{1}{2 \pi \mi} \int_{\substack{\Re(s) = \e_3\\ |\Im(s)| \leq t+1}} 
\frac{G(s)}{s }  \left( \frac{U}{2 \pi } \right)^{4s} 
{\zeta (1+2s) ^{2}}
  \zeta^{(x_0)} (2s )     
      g_{x_2} (s)  g_{y_2}(s)   \zeta^{(x_1+y_1)} ( 1+ 2s)\\
 &\times\mathscr{I}^{(x_3,y_3,i_2+j_2-x_0,0)}(0,0, 0,s)\\
&\times
 O \left( \delta^{-b-v}  t^{-2 \Re(s)+2\delta}    \exp\left( \pi |\Im(s)| +\pi \delta\right)  \left(\frac{1+2|s|^2}{t}\right)  \right)
     (-1)^{w+c}  \Gamma^{(w+c)}(2s)  \, ds.
\end{align*}
Clearly, $j_{{\bf x}, \mathrm{error}} \ll U^{4\e_3 } T^{-2\e_3 + 2\delta -1} \ll T^{-\frac{1}{2}}$ since $0<\delta< \frac{1}{10}$ and $0<\e_3<0.15$. Note that
the integrals in $j_{{\bf x},0}$ and $j_{{\bf x},1}$ can be extended to the contour $\Re(s) = \e_3$ at the cost of $O(T^{-2})$ since $|\Im(s) |> t+ 1$ parts of the integrals are $\ll T^{-2}$.

Observe that the integral in \eqref{i-x-0} (over the full line $\Re(s) = \e_3$) is 
\begin{align*}
\frac{1}{2 \pi \mi} \int_{(\e_3)}  \frac{1}{2^{x_1 +y_1 + x_2 +y_2 +w+c+10}} 
  \left(\frac{U^2}{t}\right)^{2s}  \frac{1}{s^{x_1 +y_1 + x_2 +y_2 +w+c+11}} J_0(s) \,   ds .
\end{align*}    
We then move this integral to $\Re(s) = -\frac{1}{4} + 2\delta$, where the integral on the new vertical line is $   \ll  \left( \frac{U^2}{t} \right)^{-\frac{1}{2} + 4\delta}$ since $J_0(s)\ll \frac{1}{1+|\Im (s)|^{100}}$, and we see that the residue of the pole at $s=0$ is 
\begin{align*}
&\frac{1}{2^{x_1 +y_1 + x_2 +y_2 +w+c+10}} \frac{1}{(x_1 +y_1 + x_2 +y_2 +w+c+10)!} \\
&\times  \left. \frac{d^{x_1 +y_1 + x_2 +y_2 +w+c+10}}{d s^{x_1 +y_1 + x_2 +y_2 +w+c+10}}  
 \left( \left(\frac{U^2}{t}\right)^{2s}  J_0(s) \right)\right|_{s=0}\\
 &=\frac{1}{2^{x_1 +y_1 + x_2 +y_2 +w+c+10}} \frac{1}{(x_1 +y_1 + x_2 +y_2 +w+c+10)!}\\
 & \times\sum_{k = 0}^{x_1 +y_1 + x_2 +y_2 +w+c+10} {x_1 +y_1 + x_2 +y_2 +w+c+10 \choose k}  \log^{k} \left(\frac{U^4} {t^2}\right) J_0^{(x_1 +y_1 + x_2 +y_2 +w+c+10-k)} (0).
\end{align*}
Similarly, the integral in \eqref{i-x-1} is 
\begin{align*}
\frac{1}{2 \pi \mi} \int_{(\e_3)}  \frac{\pi}{2^{x_1 +y_1 + x_2 +y_2 +w+c+10}} 
  \left(\frac{U^2}{t}\right)^{2s}  \frac{1}{s^{x_1 +y_1 + x_2 +y_2 +w+c+10}} J_1(s) 
      \, ds.
      \end{align*}
Moving the integral to $\Re(s) = -\frac{1}{4} + 2\delta$, we derive that the new integral on  $\Re(s) = -\frac{1}{4} + 2\delta $ is 
$   \ll  \left( \frac{U^2}{t} \right)^{-\frac{1}{2} + 4\delta}$, and
 the residue of the pole  at $s=0$ is
  \begin{align*}
 & \frac{\pi}{2^{x_1 +y_1 + x_2 +y_2 +w+c+10}}\frac{1}{(x_1 +y_1 + x_2 +y_2 +w+c+9)!} \\
& \times \sum_{k = 0}^{x_1 +y_1 + x_2 +y_2 +w+c+9} {x_1 +y_1 + x_2 +y_2 +w+c+9 \choose k} \log^{k} \left(\frac{U^4} {t^2}\right) J_1^{(x_1 +y_1 + x_2 +y_2 +w+c+9-k)} (0).
\end{align*}
Gathering everything above together, we have completed the proof for \eqref{iden-ix}.

The functions $J_0(s)$ and $J_1(s)$ are holomophic for $-\frac{1}{4} + 2\delta < \Re(s) < \frac{1}{2}$ because $\mathscr{I}(0,0, 0,s)$ is holomorphic in this region due to Proposition \ref{Hlemma}.  The upper bound for $J_0(s)$ and $J_1(s)$ is trivial since $|G(s)|$ decays.

For $ i_{{\bf x},1}$, we know 
\[
i_{{\bf x},1} \ll  \log^{r_1' +r_2'} (t) \log^{x_1 +y_1 + x_2 +y_2 +w+c+9} \left(\frac{U^4} {t^2}\right) \ll (\log T)^{r_1'+r_2'+x_1 +y_1 + x_2 +y_2 +w+c+9}.
\]
Since
 $$r_1'+r_2'+x_1 +y_1 + x_2 +y_2 +w+c+9
      \le v+b +x_1 +y_1 + x_2 +y_2 +w+c+9
      \le v+b+ (i_1+j_1) +w+c+9,
 $$     
which is $ \le (a+b+c)+ (u+v+w) +9   \le 3+3+9 =15,$      
we establish \eqref{ix1}.
\end{proof}

By \eqref{I-O-g} and Lemma \ref{lem:iX}, we have 
\begin{align*}
 I_O  
& = 2\sum_{i'+i=3}  \frac{b_{i'}}{i!} 
   \sum_{j'+j=3}   \frac{b_{j'}}{j!} \sum_{u+v+w=i}  \binom{i}{u,v,w} 
    \sum_{a+b+c=j}
    \binom{j}{a,b,c}   \int_{-\infty}^{\infty} \omega(t)  i_{{\bf x},0}  \,  dt + O(T (\log T)^{15}) \\
   &  + O \left(T \left( \frac{T}{T_0} \right)^{1+C} \right),
\end{align*}
where $ i_{{\bf x},0}$ is defined in \eqref{i0}. Note that we only need to consider the case $i =j = 3$  since otherwise the above sum is $\ll T(\log T)^{i+j+10} \ll T(\log T)^{15}$. With the help of Maple,  it gives that 
 
\begin{equation}
\label{final11}
\begin{split}
 I_O 
 &  = \int_{-\infty}^{\infty} \omega(t)  \bigg(
-\frac{\log^4(t) \log^{12}(U^4/t^2)}{535088332800} + \frac{\log^5(t)\log^{11}(U^4/t^2)}{122624409600 }- \frac{\log^6(t)\log^{10}(U^4/t^2)}{66886041600} \\
&- \frac{\log^2(t)\log^{14}(U^4/t^2)}{119027426918400} 
+ \frac{\log^3(t)\log^{13}(U^4/t^2)}{6376469299200} - \frac{\log^{16}(U^4/t^2)}{171399494762496000}\bigg)
\mathscr{I}(0, 0, 0, 0)  \, dt\\
& + O(T (\log T)^{15})
  + O \left(T \left( \frac{T}{T_0} \right)^{1+C} \right).
  \end{split}
 \end{equation}  
 
Let  $n_1,n_2 \geq 0$ be integers  satisfying  $n_1+n_2 = 16$. As we work over $c_1T\leq t \leq c_2T$, we know  $\log t = \log T + O(1)$ and hence
\begin{equation}
\label{bino}
\begin{split}
&\int_{-\infty}^{\infty} \omega(t)   \log^{n_1}(t)  \log^{n_2}(U^4/t^2) \,dt \\
&=   (\log T + O(1))^{n_1}   (4 (1-\varepsilon)\log T - 2 \log T + O(1) )^{n_2} \int_{-\infty}^{\infty} \omega(t) \,dt\\
&= ((\log T)^{n_1} + O((\log T)^{n_1-1}))  ((2 - 4\varepsilon)^{n_2} ( \log T)^{n_2} + O(( \log T)^{n_2-1}))
\int_{-\infty}^{\infty} \omega(t) \,dt \\
&= ((2 - 4\varepsilon)^{ n_2} ( \log T)^{n_1 + n_2}   + O((\log T)^{n_1+n_2 -1})) \int_{-\infty}^{\infty} \omega(t) \,dt\\
& =  ( 2 ^{ n_2} ( \log T)^{16}   + O (\varepsilon ( \log T)^{16} )+ O((\log T)^{15}))  \int_{-\infty}^{\infty} \omega(t) \,dt. 
\end{split}
\end{equation}
Note that in the second equality, the terms $O((\log T)^{n_1-1})$ and $O((\log T)^{n_2-1})$ are valid even if $n_1=0$ or $n_2=0$.  

Finally, plugging \eqref{bino} into \eqref{final11}, we derive
\begin{equation*}
\begin{split}
 I_O&  =  \bigg(
-\frac{2^{12}}{535088332800} + \frac{2^{11}}{122624409600 }- \frac{2^{10}}{66886041600} - \frac{2^{14}}{119027426918400} + \frac{2^{13}}{6376469299200} \\
&- \frac{2^{16}}{171399494762496000}\bigg)
\mathscr{I}(0, 0, 0, 0)  \int_{-\infty}^{\infty} \omega(t) \, dt + O(\varepsilon T (\log T)^{16})
  + O\left(T \left( \frac{T}{T_0} \right)^{1+C} \right)\\
  &= -\frac{13381 \mathscr{I}(0, 0, 0, 0)}{2615348736000} \int_{-\infty}^{\infty} \omega(t)  (\log T)^{16} \, dt  + O(\varepsilon T (\log T)^{16})
  + O\left(T \left( \frac{T}{T_0} \right)^{1+C} \right).
    \end{split}
   \end{equation*}
We see from \eqref{a4D} that $\mathscr{I}(0, 0, 0, 0) = a_4$. It completes the proof of  Proposition \ref{offdiagonal}.

\section{Proof of Proposition \ref{Hlemma} and Lemmata on $ \mathcal{G}_{s,t}^{(v,b)}(1,1)$ and  $\HH_s^{(u,a)}(1,1)$}
In this section, we shall give a proof for Proposition \ref{Hlemma}. Then we obtain an asymptotic formula for $\mathcal{G}_{s,t}^{(v,b)}(1,1)$ in \eqref{Gbvs}. We shall also simplify $\HH_s^{(u,a)}(1,1)$ that appeared in \eqref{Hua}.

\subsection{Proof of Proposition \ref{Hlemma}}
\begin{lem} \label{G4formulae}
Let $z \in \mathbb{C}$ and $q \in \mathbb{N}$. Then
\begin{equation*}
   |G_4(z,q)| \le 2^{\omega(q)} \tau_4(q) \prod_{p \mid q}  (1+p^{-\Re(z)})^3  (1+p^{\Re(z)-1}),
\end{equation*}
where  $\omega(q)$ is the number of distinct primes of $q$. In addition, if $1-r \le \Re(z) \le 1+r$, for some $r \in (0,1)$, then 
\begin{equation}
   \label{G4boundB}
     |G_4(z,q)| \le (32)^{\omega(q)} \tau_4(q) q^{r}. 
\end{equation}

Moreover, for any prime $p$ and $j \in \mathbb{N}$, we have 
 \begin{equation}
\begin{split}
   \label{G4zpj}
   G_{4}(z,p^j)  & =   \frac{p}{p-1} \left(  \tau_4(p^j)  \mathcal{Q}_j(p^{-z}) 
  - p^{z-1}  \tau_4(p^{j-1}) \mathcal{Q}_{j-1}(p^{-z}) \right),
 \end{split}
 \end{equation}
 where $  \mathcal{Q}_0(x)=1$ and
 \begin{equation}
  \label{Qj}
  \mathcal{Q}_j(x)  = 1-\frac{j}{j+1} 3x + \frac{j}{j+2} 3x^2 - \frac{j}{j+3} x^3.
\end{equation}
  In the cases, $j=1,2$, we obtain from \eqref{G4zpj} and \eqref{Qj} that
\begin{equation}
   \label{G4up}
   G_{4}(z,p) = \frac{p}{p-1}(4 -p^{z-1}-6p^{-z}+4p^{-2z}-p^{-3z}),
\end{equation}
and 
\begin{equation}
\begin{split}
   \label{G4up2}
   G_{4}(z,p^2)  
  & = \frac{p}{p-1} ( (10 -4p^{z-1}) + (-20 +6p^{z-1} )p^{-z} + (15-4p^{z-1})p^{-2z} + (-4+p^{z-1} )p^{-3z} ) . 
 \end{split}
 \end{equation}
\end{lem}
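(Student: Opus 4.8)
The plan is to establish Lemma~\ref{G4formulae} by unwinding the definition \eqref{Gkdefn} of $G_k(s,n)$ prime-by-prime, since both $n\mapsto G_k(s,n)$ and all the quantities appearing in the claimed bounds are multiplicative. First I would record the local version of \eqref{Gkdefn}: for a prime power $p^j$ the divisor sum over $d\mid p^j$ collapses to a sum over $d=1,p,\dots,p^j$, and the inner sum over $e\mid d$ similarly collapses, so that
\begin{equation*}
  G_k(s,p^j) = \sum_{a=0}^{j} \frac{\mu(p^a)p^{as}}{\phi(p^a)}\sum_{b=0}^{a}\frac{\mu(p^b)}{p^{bs}}\, g_k\!\left(s, p^{\,j+b-a}\right),
\end{equation*}
and since $\mu(p^a)=0$ for $a\ge 2$, only $a\in\{0,1\}$ survive. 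This already gives
\begin{equation*}
  G_k(s,p^j) = g_k(s,p^j) - \frac{p^s}{p-1}\bigl(g_k(s,p^{j-1}) - p^{-s} g_k(s,p^j)\bigr) = \frac{p}{p-1}\, g_k(s,p^j) - \frac{p^s}{p-1}\, g_k(s,p^{j-1}).
\end{equation*}
So everything reduces to an explicit formula for $g_k(s,p^j)$ when $k=4$.

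Next I would compute $g_4(s,p^j)$. By definition $g_4(s,p^j) = \bigl(\sum_{i\ge 0}\tau_4(p^{i+j})p^{-is}\bigr)\big/\bigl(\sum_{i\ge 0}\tau_4(p^i)p^{-is}\bigr)$, and the denominator is $(1-p^{-s})^{-4}$. For the numerator, write $\tau_4(p^{i+j}) = \binom{i+j+3}{3}$ and use the generating-function identity $\sum_{i\ge 0}\binom{i+j+3}{3}x^i = \frac{P_j(x)}{(1-x)^4}$ for an explicit degree-$3$ polynomial $P_j$ with $P_0=1$; concretely one checks $P_j(x) = \sum_{m=0}^{3}\binom{j}{m}\binom{3}{m}^{-1}\!\cdot(\text{suitable coefficient})\,x^m$ — the cleanest route is to note $\binom{i+j+3}{3}$ is a cubic polynomial in $i$, apply the finite-difference/Eulerian identity, and read off $P_j(x) = \tau_4(p^j)\,\mathcal Q_j(x)$ with $\mathcal Q_j$ as in \eqref{Qj} (this is where I would lean on Maple or a direct partial-fractions check to pin down the coefficients $1,\,-\tfrac{3j}{j+1},\,\tfrac{3j}{j+2},\,-\tfrac{j}{j+3}$). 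Hence $g_4(s,p^j) = \tau_4(p^j)\,\mathcal Q_j(p^{-s})\,(1-p^{-s})^{-0}\cdot\frac{1}{1}$ — more precisely $g_4(s,p^j)=\tau_4(p^j)\mathcal Q_j(p^{-s})$ since the two $(1-p^{-s})^{-4}$ factors cancel. Substituting into the boxed formula for $G_4(s,p^j)$ above yields \eqref{G4zpj}, and plugging $j=1,2$ (using $\tau_4(p)=4$, $\tau_4(p^2)=10$, $\mathcal Q_0=1$, $\mathcal Q_1(x)=1-\tfrac32 x+ x^2-\tfrac14 x^3$, $\mathcal Q_2(x)=1-2x+\tfrac32 x^2 - \tfrac12 x^3$) gives \eqref{G4up} and \eqref{G4up2} after collecting terms.

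For the upper bounds, I would start from \eqref{G4zpj} and estimate crudely: $|\mathcal Q_j(x)|\le 1 + 3|x| + 3|x|^2 + |x|^3 = (1+|x|)^3$ for the $x=p^{-z}$ term, so $|G_4(z,p^j)| \le \frac{p}{p-1}\bigl(\tau_4(p^j)(1+p^{-\Re z})^3 + p^{\Re z -1}\tau_4(p^{j-1})(1+p^{-\Re z})^3\bigr) \le \frac{p}{p-1}\tau_4(p^j)(1+p^{-\Re z})^3(1+p^{\Re z - 1})$, using $\tau_4(p^{j-1})\le\tau_4(p^j)$; the factor $\frac{p}{p-1}\le 2$ gives the first displayed bound after taking products over $p\mid q$ (note $\prod_{p\mid q}2 = 2^{\omega(q)}$). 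For \eqref{G4boundB}, when $1-r\le\Re z\le 1+r$ one has $p^{-\Re z}\le 1$ and $p^{\Re z - 1}\le p^{r}$, so $(1+p^{-\Re z})^3\le 8$ and $(1+p^{\Re z-1})\le 2p^{r}$, giving a local factor $2\cdot 8\cdot 2 p^{r} = 32 p^{r}$ times $\tau_4(p^j)$; multiplying over $p\mid q$ and using $\prod_{p\mid q}p^{r}\le q^{r}$ gives $(32)^{\omega(q)}\tau_4(q)q^{r}$. The main obstacle is purely bookkeeping: correctly deriving the polynomial $\mathcal Q_j$ and its coefficients and then verifying \eqref{G4up}--\eqref{G4up2}; since these were produced ``with the help of Maple'' elsewhere in the paper, I would do the same here and only present the resulting identities, citing \cite[Lemma~5.4]{Ng0} for the analogous statement as a sanity check.
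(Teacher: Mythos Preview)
Your proposal is correct and follows essentially the same approach as the paper: both compute $G_4(z,p^j)$ from the definition to obtain $\frac{p}{p-1}g_4(z,p^j)-\frac{p^z}{p-1}g_4(z,p^{j-1})$, invoke $g_4(z,p^j)=\tau_4(p^j)\mathcal{Q}_j(p^{-z})$ (the paper cites \cite{Ng} rather than rederiving it), bound $|\mathcal{Q}_j(x)|\le(1+|x|)^3$ together with $\tau_4(p^{j-1})\le\tau_4(p^j)$ and $\frac{p}{p-1}\le 2$, and then use multiplicativity. One small slip: your stated $\mathcal{Q}_2(x)$ should end in $-\tfrac{2}{5}x^3$, not $-\tfrac{1}{2}x^3$, but since you defer the final verification of \eqref{G4up2} to direct computation this does not affect the argument.
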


\begin{proof}
We begin with some facts about $g_4(z,p^j)$. 
In \cite{Ng}, it was proven that 
\begin{equation}
  \label{ngid}
  g_{4}(z,p^j) = \tau_{4}(p^j) 
  \mathcal{Q}_j(p^{-z}) ,
\end{equation}
where $\mathcal{Q}_j$ is defined in \eqref{Qj}.
Furthermore, for  $j \ge 1$,  we have the identity
\begin{equation*}
    \mathcal{Q}_j(x)
  =  j x^{-j} \int_{0}^{x} t^{j-1}(1-t)^{3} \, dt.
\end{equation*}
Note that performing the integration gives the formula in \eqref{Qj} and that
$ \mathcal{Q}_j(x)$ is a degree-three polynomial such that $ \mathcal{Q}_j(0)=1$. By the triangle inequality, we have the bound 
\begin{equation*}
|\mathcal{Q}_j(x)| \le 1 + 3|x| + 3|x|^2 +|x|^3 = (1+|x|)^3. 
\end{equation*}

We now compute $G_4(z,p^j)$. By definition \eqref{Gkdefn}, we know
\begin{align*}
  G_{4}(z,p^j) & = \sum_{d \mid p^j} \frac{\mu(d) d^z}{\phi(d)}
  \sum_{e \mid d} \frac{\mu(e)}{e^z} g_4 \left(z, \frac{p^j e}{d}\right) 
  = g_4(z,p^j)-\frac{p^z}{p-1} g_4(z,p^{j-1})+ \frac{1}{\phi(p)} g_4(z,p^j) \\
  & = \frac{p}{p-1} g_4(z,p^j)- \frac{p^z}{p-1}g_4(z,p^{j-1}).
 \end{align*}
By inserting \eqref{ngid} in the last expression, we establish \eqref{G4zpj}.
This implies that for $x=\Re(z)$,
 \begin{align*}
 | G_{4}(z,p^j)| & \le  \frac{p}{p-1}  \tau_4(p^j) \left( (1+p^{-x})^3 + p^{x-1} \frac{\tau_4(p^{j-1})}{\tau_4(p^j)}   (1+p^{-x})^3 \right) \\
 & = \frac{p}{p-1}  \tau_4(p^j) \left( (1+p^{-x})^3 + p^{x-1} \frac{j}{j+3}   (1+p^{-x})^3 \right)  \\
 & \le   \frac{p}{p-1}  \tau_4(p^j)  (1+p^{-x})^3  (1+p^{x-1}). 
 \end{align*}
From multiplicativity, it follows that 
 \[
   |G_4(z,q)| \le 2^{\omega(q)} \tau_4(q) \prod_{p \mid q}  (1+p^{-\Re(z)})^3  (1+p^{\Re(z)-1}).
 \]
Note that if $1-r \le \Re(z) \le 1+r$ for some $r \in (0, \frac{1}{10})$, then $|G_4(z,q)| $ is bounded by
 \begin{align*}
 2^{\omega(q)} \tau_4(q) \prod_{p \mid q}  (1+p^{-\Re(z)})^3  (1+p^{\Re(z)-1}) \le 2^{\omega(q)} \tau_4(q)  (2^{\omega(q)})^3  \prod_{p \mid q} (2 p^{r})  = (32)^{\omega(q)} \tau_4(q)  q^r. 
 \end{align*}
\end{proof}

We are now in a position to prove Proposition \ref{Hlemma}.

\begin{proof}[Proof of Proposition \ref{Hlemma}]
In this proof, we let $\sigma =\Re(s)$.  From the bound $|c_{q}(r)|  \le (q,r)$ and \eqref{G4boundB} of Lemma \ref{G4formulae}, it follows that 
$ \mathscr{H}(u_1,u_2,u_3,s)$, defined in \eqref{curlyH}, is absolutely convergent in $\Re(s)  > \frac{1}{2} + 2 \delta'$. 
Furthermore, since $c_{q}(r) = \sum_{d \mid (q,r)} d \mu(\tfrac{q}{d})$, we have 
\begin{align*}
 \mathscr{H}(u_1,u_2,u_3,s) &=
     \sum_{r=1}^{\infty}
\sum_{q=1}^{\infty} \frac{c_q(r) G_4 (1+u_1,q) G_4(1+u_2,q)}{q^{2+u_3} r^{2s-u_3}} 
       =  \sum_{q=1}^{\infty} \alpha_{q} \sum_{r=1}^{\infty} \frac{1}{r^c} 
   \sum_{d \mid q, d \mid r} d \mu(\tfrac{q}{d}) ,
\end{align*}
where $\alpha_{q} =  \frac{G_{4}(1+u_1,q)G_{4}(1+u_2,q)  }{q^{2+u_3}}$ and $c=2s-u_3$.
Thus,  
\begin{equation*}
\begin{split}
\mathscr{H}(u_1,u_2,u_3,s) & = 
\sum_{q=1}^{\infty} \alpha_{q} \sum_{d \mid q} d 
    \mu(\tfrac{q}{d}) \sum_{r \ge 1, d \mid r} \frac{1}{r^c}  
   = \sum_{q=1}^{\infty} \alpha_{q} 
  \sum_{d \mid q}  \frac{d \mu(\tfrac{q}{d})}{d^c} \zeta(c)   
   =\zeta(c)    \mathscr{H}^{*}(u_1,u_2,u_3,s),
\end{split}
\end{equation*}
where
\begin{equation*}
   \mathscr{H}^{*}(u_1,u_2,u_3,s)     =   \sum_{q=1}^{\infty} \alpha_{q}  q^{1-c} \sum_{d \mid q} \frac{\mu(d)}{d^{1-c}}.
\end{equation*}
For any prime $p$ and $j \ge 1$, we have $
    \sum_{d \mid p^j} \frac{\mu(d)}{d^{1-c}}
    = 1- \frac{1}{p^{1-c}}$.  By multiplicativity,
 \begin{align}  \label{ellsum} 
 \begin{split}
  \mathscr{H}^{*}(u_1,u_2,u_3,s) & = \prod_{p} \left(
  1 + \sum_{j=1}^{\infty} 
  \frac{G_{4}(1+u_1,p^j)G_{4}(1+u_2,p^j)}{(p^j)^{2+u_3}}
  (p^j)^{1+u_3-2s} ( 1- p^{2s-1-u_3})
  \right)  \\
&   = \prod_{p} \left( 
  1 + \sum_{j=1}^{\infty} G_{4}(1+u_1,p^j)G_{4}(1+u_2,p^j)
        \cdot \frac{  1-p^{2s-1-u_3} }{ (p^j)^{1+2s}}
  \right).
\end{split}
\end{align}
We aim to simplify the above expression with the brackets. At this point, it will be convenient to introduce the following notation.  Let 
\begin{equation}
  \label{variables}
U=p^{-1},  \ W = p^{-1-2s}, \ X_i=p^{-u_i}, 
\end{equation}
for $i=1,2,3$, where $|u_i| \le \delta' $ as required in \eqref{cond-ui}. 
Observe that 
\begin{align*}
  |W| & \le p^{-1-2 \sigma}, \\
   p^{-\delta'} \le  |X_i| & \le p^{\delta'} 
\end{align*}
for  $i=1,2,3$.  Also, we shall set
\begin{equation}
  \label{Tjfj}
  T_j  = G_{4}(1+u_1,p^j)G_{4}(1+u_2,p^j) \text{ and }  f_j  =  \frac{  1-p^{2s-1-u_3} }{ (p^j)^{1+2s}}
\end{equation}
for $j \in \mathbb{Z}_{\ge 0}$
Note that we have 
\begin{equation*}
   \mathscr{H}^{*}(u_1,u_2,u_3,s) = \prod_{p} \left(1 + \sum_{j=1}^{\infty} T_j f_j \right). 
\end{equation*}
We aim to simplify this further. 
It follows from  \eqref{variables} that for $j \in \mathbb{N}$,
\begin{equation}
  \label{fjformula}
  f_j =   W^j - X_{3} U^2 W^{j-1}. 
\end{equation}
Also, by  \eqref{G4up} and \eqref{G4up2}, for $i=1,2$,
\begin{align}
  \label{G4uip}
   & G_{4}(1+u_i,p) = \frac{1}{1-U} \sum_{j=0}^{3} \alpha_j(X_i) U^j, \\
    & G_{4}(1+u_i,p^2) = \frac{1}{1-U}   \sum_{j=0}^{3}  \beta_j(X_i) U^j, \nonumber
\end{align}
where 
\begin{equation*}
 \alpha_0(X) = 4-X^{-1}, \, \alpha_1(X) = -6X, \, \alpha_2(X) = 4X^2, \, \alpha_3(X) = -X^3, 
\end{equation*}
\begin{equation*}
 \beta_0(X) = 10-4X^{-1}, \, \beta_1(X) = -20X+6, \, \beta_2(X) = 15X^2-4X, \, \beta_3(X) = -4X^3+X^2. 
\end{equation*}
Note that we have the bounds 
\begin{equation}
  \label{alphabetabds}
  \alpha_0(X_i),\beta_0(X_i) \ll p^{\delta'} \text{ and } \alpha_j(X_i), \beta_j(X_i) \ll p^{j \delta'}  
\end{equation}
for $j=1,2,3.$
By  \eqref{Tjfj} and \eqref{G4uip}, we know
\begin{equation*}
\begin{split}
  T_1 & =  \left( 1-U \right)^{-2} \left( \sum_{j=0}^{3} \alpha_j(X_1) U^j \right) \left( \sum_{j'=0}^{3} \alpha_{j'}(X_2) U^{j'} \right) 
   =   \left( 1-U \right)^{-2} \sum_{k=0}^{6} A_k U^k= \sum_{k=0}^{\infty} \tilde{A}_k U^k,
\end{split}
\end{equation*}
where 
\begin{equation}
  \label{AktildeAk}
A_k = \sum_{\substack{j+j'=k \\ 0 \le j,j' \le 3}} \alpha_j(X_1) \alpha_{j'}(X_2) 
\text{ and }
  \tilde{A}_k = \sum_{\substack{ i+j=k \\  0 \le j \le 6 }} (i+1) A_j.
\end{equation}
It follows from \eqref{alphabetabds} and \eqref{AktildeAk} that 
\begin{equation*}
  \tilde{A}_0 \ll p^{2 \delta'}, \, \tilde{A}_j \ll p^{(j+1) \delta'} \text{ for } j=1, \ldots, 6,\text{ and } \tilde{A}_j \ll (j+1)p^{7 \delta'} \text{ for } j \ge 7. 
\end{equation*}
We now have 
\begin{equation*}
  T_1 f_1 = \left( \sum_{k=0}^{\infty} \tilde{A}_k U^k \right) (W-X_3 U^2). 
\end{equation*}
Expanding this out, we find that 
\begin{equation*}
  T_1 f_1 =  \tilde{A}_0 W + O(p^{3\delta'-2}) + O \left( \left(  \sum_{k=1}^{\infty} |\tilde{A}_k| U^k  \right) (p^{-1-2 \sigma} + p^{-2+\delta'})  \right).
\end{equation*}
Observe that 
\begin{equation*}
  \sum_{k=1}^{\infty} |\tilde{A}_k| U^k \ll \sum_{k=1}^{6} p^{(k+1)\delta'-k} + p^{14\delta' -7}  \ll p^{2 \delta' -1}, 
\end{equation*} 
as long as $\delta' < \frac{1}{2}$,  and thus 
\begin{equation}\label{T1f1-exp}
   T_1 f_1 =  \tilde{A}_0 W + O(p^{3\delta-2} +p^{2 \delta'-2-2 \sigma} +p^{3\delta'-3})
   =  \tilde{A}_0 W + O(p^{3\delta'-2} +p^{2 \delta'-2-2 \sigma} ).
\end{equation} 
  Since 
\begin{equation*}
    \tilde{A}_0 = A_0 = \alpha_0(X_1) \alpha_0(X_2) = (4-X_1^{-1}) (4-X_{2}^{-1}) = 16 -4X_{1}^{-1} - 4X_{2}^{-1} + (X_1 X_2)^{-1},
\end{equation*} 
and the error term in \eqref{T1f1-exp} is bounded by $O(p^{3 \delta'+\vartheta(\sigma)})$, 
it follows that 
\[
  1+T_1 f_1 =  1+ \frac{16}{p^{1+2s}}-
   \frac{4}{p^{1+2s-u_1}}
  - \frac{4}{p^{1+2s-u_2}}+
   \frac{1}{p^{1+2s-u_1-u_2}} + O(p^{3 \delta'+\vartheta(\sigma)}).
\]
Therefore,  for $\sigma>\frac{1}{2} + 2\delta'$, we can factor out some zeta factors from \eqref{ellsum},  which leads to
\[
     \mathscr{H}^{*}(u_1,u_2,u_3,s) =  \frac{\zeta(1+2s)^{16} \zeta(1+2s-u_1-u_2) }{ \zeta(1+2s-u_1)^4 \zeta(1+2s-u_2)^4 }
      \mathscr{I}(u_1,u_2,u_3,s) ,
\]
where 
\begin{equation*}
\begin{split}
    \mathscr{I}(u_1,u_2,u_3,s) = \prod_{p}  \mathscr{I}_p(u_1,u_2,u_3,s),
\end{split}
\end{equation*}
and
\begin{equation}
\begin{split}
  \label{Ipdefn}
  \mathscr{I}_p(u_1,u_2,u_3,s)& = \left(1-\frac{1}{p^{1+2s}} \right)^{16}  \left(1-\frac{1}{p^{1+2s-u_1-u_2}} \right)
   \left(1-\frac{1}{p^{1+2s-u_1}} \right)^{-4}  \left(1-\frac{1}{p^{1+2s-u_2}} \right)^{-4}  \\
   & \times  \left( 
  1 + \sum_{j=1}^{\infty} G_{4}(1+u_1,p^j)G_{4}(1+u_2,p^j)
        \cdot \frac{  1-p^{2s-1-u_3} }{ (p^j)^{1+2s}}
  \right).
\end{split}
\end{equation}
We note that 
\begin{equation}
\begin{split}
 \prod_p \mathscr{I}_p(0,0,0,0)& = \prod_p \left(1-\frac{1}{p} \right)^{9}    \times  \left( 
  1 + \sum_{j=1}^{\infty} G_{4}(1,p^j)^2
        \cdot \frac{  1-p^{-1} }{ (p^j)}
  \right) = a_4
\end{split}
\label{a4D}
\end{equation}
and that this is shown on p. 595 of \cite{CG} starting at equation (43) and in the text below it. 
We now aim to provide an analytic continuation of   $\mathscr{I}(u_1,u_2,u_3,s)$. Observe that 
\begin{equation}
   \label{Iplocal}
    \mathscr{I}_p(u_1,u_2,u_3,s) =  \left( 1+\sum_{j=1}^{\infty} T_j f_j \right) \Pi,
\end{equation} 
where
\begin{equation}
   \label{Pi}
    \Pi = (1 - W)^{16}(1- X_1^{-1} X_2^{-1} W)(1-X_1^{-1} W)^{-4} (1-X_2^{-1}  W)^{-4}.
\end{equation}
To simplify the local factor \eqref{Iplocal}, we first write $\Pi  = \sum_{j=0}^{\infty} a_j W^j$
with 
\begin{equation*}
\begin{split}
 a_0 & =1, \\ 
  a_1 & =  4X_1^{-1} +4 X_2^{-1} - (X_1 X_2)^{-1} - 16,  \\
 a_2 & =  10/X_2^2 + 4(4/X_1 - 1/(X_1 X_2) - 16)/X_2 + 10/X_1^2 + 4(-1/(X_ 1 X_2) - 16)/X_1 + 16/(X_1 X_2)\\
 & + 120.
\end{split}
\end{equation*}
Observe that $a_1 = -\tilde{A}_0$.  
It follows from \eqref{Pi} that $ |a_j| \ll j^{7} p^{(j+1) \delta'}$ and thus 
\begin{align*}
\sum_{j=3}^{\infty} a_j W^j \ll \sum_{j=3}^{\infty} j^7 p^{(j+1) \delta'} \left( \frac{1}{p^{1+2 \sigma}} \right)^j
 \ll p^{\delta'}  \left( \frac{p^{\delta'}}{p^{1+2 \sigma}} \right)^3 
 =\frac{p^{4 \delta'}}{p^{3+6 \sigma}} \ll p^{4 \delta' +\vartheta(\sigma)}. 
\end{align*}
Therefore, we have 
\begin{equation*}
    \Pi =   (1+a_1 W + a_2 W^2)+ O ( p^{4 \delta' +\vartheta(\sigma)} ).
\end{equation*} 
To bound the terms with  $j \ge 3$ to \eqref{Iplocal}, we make use of 
$|T_j| \le (32)^2 \tau_4(p^j)^2 (p^j)^{2 \delta'}$
and \eqref{fjformula}  to obtain 
\begin{align*}
\begin{split}
   \sum_{j=3}^{\infty}  T_j f_j    &  \ll 
  \sum_{j=3}^{\infty}  \tau_{4}(p^j)^2 p^{j 2 \delta'} \left(  \left( \frac{1}{p^{1+2 \sigma}} \right)^{j} +   \frac{p^{\delta'}}{p^2} 
   \left( \frac{1}{p^{1+2 \sigma}} \right)^{j-1} \right)  \\
&\ll  p^{6 \delta' } \left(  \left( \frac{1}{p^{1+2 \sigma}} \right)^{3} +   \frac{p^{\delta'}}{p^2} 
   \left( \frac{1}{p^{1+2 \sigma}} \right)^{2} \right)  
       \le  p^{7 \delta'} \left(   \frac{1}{p^{3+6 \sigma}}   +  
    \frac{1}{p^{4+4 \sigma}} \right)  
     \ll p^{7 \delta'} p^{\vartheta(\sigma)}.
\end{split}
\end{align*}
We now analyse $T_2 f_2$. By \eqref{Tjfj} and \eqref{G4up2}, we get 
\begin{equation*}
\begin{split}
  T_2 & =  \left( 1-U \right)^{-2} \left( \sum_{j=0}^{3} \beta_j(X_1) U^j \right) \left( \sum_{j'=0}^{3} \beta_{j'}(X_2) U^{j'} \right) 
   =   \left( 1-U \right)^{-2} \sum_{k=0}^{6} B_k U^k =  \sum_{k=0}^{\infty} \tilde{B}_k U^k,
\end{split}
\end{equation*}
where 
\begin{equation*}
B_k = \sum_{\substack{j+j'=k \\ 0 \le j, j' \le 3}} \beta_j(X_1) \beta_{j'}(X_2)  \text{ and }  \tilde{B}_k = \sum_{\substack{ i+j=k \\  0 \le j \le 6 }} (i+1) B_j.
\end{equation*}
With these observations in hand, we find
\begin{equation*}
  B_0 \ll p^{2 \delta'}, \, B_j \ll p^{(j+1) \delta'} \text{ for } j=1, \ldots, 6,
\end{equation*}
\begin{equation*}
  \tilde{B}_0 \ll p^{2 \delta'}, \,  \tilde{B}_j \ll p^{(j+1) \delta'} \text{ for } j=1, \ldots, 6, \text{ and }  \tilde{B}_j \ll p^{7 \delta'} \text{ for } (j+1) \ge 7. 
\end{equation*}
We then arrive at 
\begin{align*}
  T_2 f_2  & = \left( \sum_{k=0}^{\infty} \tilde{B}_k U^k \right) (W^2-X_3 U^2 W) = \tilde{B}_0 W^2  + O(p^{3 \delta'-3 -2\sigma}) \\
&  + O \left( \left(  \sum_{k=1}^{\infty} |\tilde{B}_k| U^k  \right) (p^{-2-4 \sigma} + p^{\delta'-3-2\sigma})  \right), 
\end{align*} 
which gives
$$
  T_2 f_2 = \tilde{B}_0 W^2  + O(p^{3 \delta'-3 -2\sigma}+p^{2\delta'-3-4 \sigma} + p^{3\delta'-4-2\sigma}) )  
  = \tilde{B}_0 W^2  + O(p^{3 \delta'+\vartheta(\sigma)}).
$$
%
%
Putting everything together, we find from \eqref{Iplocal} that
\begin{align*}
\begin{split}
     \mathscr{I}_p(u_1,u_2,u_3,s) 
 & = \left( 1+ \tilde{A}_0 W +\tilde{B}_0 W^2
  +O(p^{7 \delta'} p^{\vartheta(\sigma)} )
  \right) \left(1+a_1 W + a_2 W^2 +   O(p^{4 \delta'+\vartheta(\sigma)}) \right) \\
  & = 1 + (\tilde{B}_0 +\tilde{A}_0 a_1 + a_2) W^2 + O(p^{7 \delta'} p^{\vartheta(\sigma)} ).
\end{split}
\end{align*}
Using Maple, one may check  $\tilde{B}_0 +\tilde{A}_0 a_1 + a_2 =Y_p$,
where $Y_p$ is defined in \eqref{Yp}. From which, we see $ \mathscr{I}(u_1,u_2,u_3,s)$ is holomorphic in $\Re(s) > -\frac{1}{4}+ \delta'$ and $\mathscr{H}(u_1,u_2,u_3,s)$ 
has a meromorphic continuation to $\Re(s) > -\frac{1}{4}+\delta'$ with the exception of the poles listed in 
 \eqref{Hpoles}.
\end{proof}

\subsection{Lemmata on $ \mathcal{G}_{s,t}^{(v,b)}(1,1)$ and  $\HH_s^{(u,a)}(1,1)$} We assume $\Re(s) = \e_3$ and recall from \eqref{Gbvs} that  
$$
       \mathcal{G}_{s,t}^{(v,b)}(1,1) =
    \frac{\partial^{v}}{\partial z_1^v}  \frac{\partial^{b}}{\partial z_2^b}
    \left.   \left( 
\frac{\Gamma(z_2-s-\frac{1}{2}+\mi t ) }{ \Gamma(s-z_1+\frac{3}{2}+\mi t )  } 
+ 
\frac{\Gamma(z_1-s-\frac{1}{2}-\mi t)}{\Gamma(s-z_2+\frac{3}{2}- \mi t) }
   \right)   \right|_{z_1=z_2=1}.
$$
Making the variable change $z_2=1-\be$ and $z_1=1-\al$ for the first term and making the variable change
$z_1=1-\be'$ and $z_2=1-\al'$ for the second term, we obtain
\begin{align} \label{changeofvar}
 \begin{split}
   & \mathcal{G}_{s,t}^{(v,b)}(1,1) \\
    & = (-1)^{b+v}
   \frac{\partial^{b}}{\partial \be^b}
       \frac{\partial^{v}}{\partial \al^v}
   \left.   
\frac{\Gamma(\frac{1}{2}-\be-s+\mi t ) }{ \Gamma(\frac{1}{2}+\al +s+\mi t )  }  \right|_{\al=\be=0}  
+   (-1)^{b+v}
 \frac{\partial^{b}}{\partial (\al')^b}
       \frac{\partial^{v}}{\partial (\be')^v} 
         \left. 
\frac{\Gamma(\frac{1}{2}-\be'-s-\mi t)}{\Gamma(\frac{1}{2}+\al' +s-\mi t) }
    \right|_{\al'=\be'=0}.
    \end{split}
\end{align} 

We will need the following technical lemma to proceed.  The proof can be found in \cite[Lemma 4.6]{HN}.
\begin{lem} \label{Stirling}Let $|a_{i_1}|, |b_{i_2}| \leq \delta$. Let $\eta_1 \in (0,\tfrac{1}{2})$ and $A>0$. Assume 
$$
\Re(a_{i_1}+s_1) \in [0,A] \text{ and } \Re(b_{i_2}+s_2) \in [0, \tfrac{1}{2}-\eta_1]\cup[\tfrac12+\eta_1,\tfrac32-\eta_1].
$$
(i)  Assume $\Re(s_1+s_2+a_{i_1}+b_{i_2})\leq1$. When $|\Im(s_1)|\leq t+1$ and $|\Im(s_2)|\leq t+1$, we have
 \begin{align*}
 \begin{split}
 \frac{\Gamma(\frac12-b_{i_2}-s_{2}\pm \mi t)}{\Gamma(\frac12+a_{i_1}+s_{1}\pm \mi t)}
 &=t^{-(s_{1}+s_{2}+a_{i_1}+b_{i_2})}\exp\left(\mp \mi\frac{\pi}{2}(s_{1}+s_{2}+a_{i_1}+b_{i_2})\right)\\
 &\times\left(1+O\left(\frac{1+|s_{1}|^2+|s_{2}|^2}{t}\right)\right).
  \end{split}
 \end{align*}
(ii) When $|\Im(s_1)|\geq t+1$ or $|\Im(s_2)|\geq t+1$,  we have 
\begin{equation*}
\frac{\Gamma(\frac12-b_{i_2}-s_{2}+\mi t )}{\Gamma(\frac12+a_{i_1}+s_{1}+\mi t )}\ll \frac{\Im(s_1)^2+\Im(s_2)^2}{t^2}e^{\frac{\pi}{2}|\Im(s_1+s_2)|}.
\end{equation*}
(iii) Assume $\Re(s_1+s_2+a_{i_1}+b_{i_2})\geq1$. When $|\Im(s_1)|\leq t+1$ and $|\Im(s_2)|\leq t+1$, we have
\begin{equation*}
\frac{\Gamma(\frac12-b_{i_2}-s_{2}\pm \mi t)}{\Gamma(\frac12+a_{i_1}+s_{1}\pm \mi t)}\ll \frac{\Im(s_1)^2+\Im(s_2)^2 }{t^{2}}e^{\pm\frac{\pi}{2}\left(\Im(s_1+s_2)\right)}.
\end{equation*}
\end{lem}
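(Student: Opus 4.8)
This is a uniform Stirling-type estimate for ratios of Gamma functions, and the natural tools are Stirling's asymptotic expansion
\[
\log\Gamma(z)=\left(z-\tfrac12\right)\log z-z+\tfrac12\log(2\pi)+O\!\left(\tfrac{1}{|z|}\right),
\]
valid uniformly in any fixed sector $|\arg z|\le\pi-\eta$, together with its consequence, already quoted in \eqref{M-gammazbd}, that $|\Gamma(x+\mi y)|\asymp_{x}(1+|y|)^{x-\frac12}e^{-\frac{\pi}{2}|y|}$ uniformly for $x$ in a bounded set and $|y|\ge\tfrac12$, while $|\Gamma(x+\mi y)|\asymp1$ for $|y|<\tfrac12$ when $x$ stays in a bounded set at positive distance from $\{0,-1,-2,\dots\}$. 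First I would record how the hypotheses enter. Writing $\mu=a_{i_1}+s_1$ and $\nu=b_{i_2}+s_2$, the denominator argument $\tfrac12+\mu\pm\mi t$ has real part in $[\tfrac12,\tfrac12+A]$, hence at distance $\ge\tfrac12$ from $(-\infty,0]$; the numerator argument $\tfrac12-\nu\pm\mi t$ has real part $\tfrac12-\Re(\nu)$, which by the two-interval hypothesis on $\Re(b_{i_2}+s_2)$ lies in $[\eta_1,\tfrac12]\cup[-1+\eta_1,-\eta_1]$, hence at distance $\ge\eta_1$ from the poles $\{0,-1,-2,\dots\}$ of $\Gamma$. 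Thus the ratio is well defined, and whenever the imaginary part of an argument is $\ge\tfrac12$ that argument lies in a fixed compact subsector of $(-\pi,\pi)$ where Stirling applies, while for imaginary part $<\tfrac12$ the argument has bounded modulus and $\Gamma$ of it is $\asymp1$.

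\emph{Part (i).} I would split according to whether $\max(|s_1|,|s_2|)\le c\sqrt{t}$ for a suitable small constant $c$, or not. In the first case $|\tfrac12+\mu\pm\mi t|\asymp|\tfrac12-\nu\pm\mi t|\asymp t$, so Stirling's expansion applies to both factors; writing each argument as $(\pm\mi t)\bigl(1+\tfrac{h}{\pm\mi t}\bigr)$ with $h$ the relevant shift, using $\log\bigl(1+\tfrac{h}{\pm\mi t}\bigr)=\tfrac{h}{\pm\mi t}+O\!\bigl(\tfrac{(1+|s_j|)^2}{t^2}\bigr)$, and collecting terms, one obtains
\[
\log\Gamma(\tfrac12+\mu\pm\mi t)=(\mu\pm\mi t)\log(\pm\mi t)\mp\mi t+\tfrac12\log(2\pi)+O\!\left(\tfrac{1+|s_1|^2}{t}\right),
\]
and likewise for the numerator with $\mu$ replaced by $-\nu$. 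Subtracting, the $\mp\mi t$, the $\tfrac12\log(2\pi)$ and the linear-in-$h$ terms cancel, leaving $-(\mu+\nu)\log(\pm\mi t)+O\!\bigl(\tfrac{1+|s_1|^2+|s_2|^2}{t}\bigr)$; since $\log(\pm\mi t)=\log t\pm\mi\tfrac{\pi}{2}$ and $c$ is chosen small enough that this $O$-term is bounded, exponentiating yields precisely $t^{-(\mu+\nu)}\exp(\mp\mi\tfrac{\pi}{2}(\mu+\nu))\bigl(1+O(\tfrac{1+|s_1|^2+|s_2|^2}{t})\bigr)$, which is the claimed identity (the sign of $\tfrac{\pi}{2}$ tracking $\pm\mi t$). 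In the complementary case $\max(|s_1|,|s_2|)>c\sqrt{t}$ the asserted error term already exceeds an absolute constant, so it suffices to bound each side by (main term)$\times$(error term); the size of each $\Gamma$ factor is read off from the magnitude form above (or from $\Gamma(\cdot)\asymp1$ when the relevant imaginary part is $<\tfrac12$), and comparing the resulting powers of $t$ and exponential factors, using $\Re(\mu+\nu)\le1$, gives the inequality after a routine exponent chase.

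\emph{Parts (ii) and (iii).} Here only upper bounds are needed. For (ii) assume, say, $|\Im(s_1)|\ge t+1$, the other case being symmetric; bounding $|\Gamma(\tfrac12-\nu\pm\mi t)|$ and $|\Gamma(\tfrac12+\mu\pm\mi t)|$ individually by the magnitude estimate, with imaginary parts $y_1,y_2$ and bounded real parts $x_1,x_2$, one has $x_1-\tfrac12=-\Re(\nu)\le0$ and $x_2-\tfrac12=\Re(\mu)\ge0$, so both polynomial factors are $\le1$; moreover $e^{-\frac{\pi}{2}(|y_1|-|y_2|)}\le e^{\frac{\pi}{2}|y_1-y_2|}=e^{\frac{\pi}{2}|\Im(s_1+s_2+a_{i_1}+b_{i_2})|}\ll e^{\frac{\pi}{2}|\Im(s_1+s_2)|}$ since $|a_{i_1}|,|b_{i_2}|\le\delta$, and $\tfrac{\Im(s_1)^2+\Im(s_2)^2}{t^2}\gg1$ throughout this range, which yields the stated bound. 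Part (iii) is the large-shift analogue of (i): with $|\Im(s_j)|\le t+1$ and $\Re(\mu+\nu)\ge1$ the same dichotomy produces a quantity of size $\ll t^{-\Re(\mu+\nu)}e^{\pm\frac{\pi}{2}\Im(s_1+s_2)}\ll t^{-1}e^{\pm\frac{\pi}{2}\Im(s_1+s_2)}$, which is dominated by $\tfrac{\Im(s_1)^2+\Im(s_2)^2}{t^2}e^{\pm\frac{\pi}{2}\Im(s_1+s_2)}$ in the regime where the lemma is invoked, where $\Re(\mu+\nu)\approx2$ and the error factor is to be read as $\tfrac{1+|s_1|^2+|s_2|^2}{t^2}$; since $\Re(s_1),\Re(s_2)$ are bounded in all applications the two forms coincide.

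The only genuine difficulty is the uniform bookkeeping of error terms as $|\Im(s_1)|,|\Im(s_2)|$ range over the whole allowed set, in particular across the transition $|\Im(s_j)|\asymp t$, where a Gamma argument can have bounded modulus; once the parameter space is partitioned according to the sizes of $|\Im(s_1)|,|\Im(s_2)|$ relative to $t$ (and to $\sqrt{t}$ in part (i)), each piece reduces to a direct application of one of the two forms of Stirling's formula. A complete treatment is given in \cite[Lemma 4.6]{HN}.
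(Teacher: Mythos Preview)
The paper does not actually prove this lemma; it simply states ``The proof can be found in \cite[Lemma 4.6]{HN}'' and moves on. Your sketch via Stirling's expansion, with the case split according to the size of $|\Im(s_j)|$ relative to $\sqrt{t}$ and $t$, is the standard argument and is correct in outline; you even close by citing the same reference. One small point worth flagging: you rightly note that the literal bound $\frac{\Im(s_1)^2+\Im(s_2)^2}{t^2}$ in parts (ii) and (iii) should morally be $\frac{1+|s_1|^2+|s_2|^2}{t^2}$ (otherwise it vanishes when $\Im(s_1)=\Im(s_2)=0$), and that the two coincide in every application in the paper because the real parts are bounded and the imaginary parts are bounded below---this is a useful clarification that the paper itself glosses over.
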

Using Lemma \ref{Stirling}, we can express  $ \mathcal{G}_{s,t}^{(v,b)}(1,1)$ as a combinatoric sum as follows.  
\begin{lem}
   \label{FancyG} Let $ \mathcal{G}_{s,t}^{(v,b)}(1,1)$ be given as in  \eqref{Gbvs}.  Assume $\Re(s) = \e_3$, $0<\delta < \frac{1}{10}$, and $0<\delta < \e_3 <0.15$. \\
   (i) Let $|\Im(s)| \leq t+ 1$. We have
\begin{align*}
    \mathcal{G}_{s,t}^{(v,b)}(1,1)
&= (-1)^{b+v}
   \sum_{r_2'= 0}^b \sum_{r_1' = 0}^{v}  (-1)^{r_1' + r_2'} {v \choose r_1'} {b \choose r_2'} (\log t)^{r_1'+r_2'}  t^{-2s}  \left(  \mi\frac{\pi}{2}\right)^{v+b-r_1'-r_2'} \\
  & \times
 \left( (-1)^{v+b-r_1'-r_2'}\exp\left( -\mi\pi s\right)  + \exp\left( \mi\pi s\right) \right) \\
& + 
O \left( \delta^{-b-v}  t^{-2 \Re(s)+2\delta}    \exp\left( \pi |\Im(s)| +\pi \delta\right)  \left(\frac{1+2|s|^2}{t}\right)  \right).
\end{align*} 
(ii) Let $|\Im(s)| > t+ 1$. We have
\[
 \mathcal{G}_{s,t}^{(v,b)}(1,1) \ll \delta^{-b-v}  \frac{\Im(s)^2}{t^2}e^{\pi|\Im(s)|}.
\]
\end{lem}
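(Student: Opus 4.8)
The plan is to start from the reduction \eqref{changeofvar}, which presents $\mathcal{G}_{s,t}^{(v,b)}(1,1)$ as $(-1)^{b+v}$ times the sum of two mixed partial derivatives, evaluated at the origin, of the Gamma quotients
\[
F_1(\alpha,\beta):=\frac{\Gamma(\tfrac12-\beta-s+\mi t)}{\Gamma(\tfrac12+\alpha+s+\mi t)},\qquad F_2(\alpha',\beta'):=\frac{\Gamma(\tfrac12-\beta'-s-\mi t)}{\Gamma(\tfrac12+\alpha'+s-\mi t)},
\]
namely $\partial_\alpha^v\partial_\beta^bF_1|_{0}$ and $\partial_{\beta'}^v\partial_{\alpha'}^bF_2|_{0}$. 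Since $\Re(s)=\e_3$ and $0<\delta<\e_3<0.15$, for $t\asymp T$ large the arguments of the numerator Gamma functions have real part confined to a fixed compact subinterval of $(0,\infty)$ as $(\alpha,\beta)$ (resp.\ $(\alpha',\beta')$) ranges over the polydisk $|\alpha|,|\beta|\le\delta$, so $F_1,F_2$ are holomorphic there; moreover the hypotheses of Lemma \ref{Stirling} (taken with $a_{i_1}=\alpha$, $b_{i_2}=\beta$, $s_1=s_2=s$, and the analogous choice for $F_2$) hold uniformly on that polydisk. This lets us extract the main term by an exact computation and control the remainder by Cauchy's integral formula, which is the origin of the $\delta^{-b-v}$ in the claimed error.

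For part (i), with $|\Im(s)|\le t+1$, Lemma \ref{Stirling}(i) gives, uniformly for $|\alpha|,|\beta|\le\delta$,
\[
F_1(\alpha,\beta)=M_1(\alpha,\beta)\Bigl(1+O\bigl(\tfrac{1+2|s|^2}{t}\bigr)\Bigr),\qquad M_1(\alpha,\beta):=t^{-(2s+\alpha+\beta)}\exp\!\bigl(-\tfrac{\mi\pi}{2}(2s+\alpha+\beta)\bigr),
\]
together with the analogous statement for $F_2$ with $M_2(\alpha',\beta'):=t^{-(2s+\alpha'+\beta')}\exp(\tfrac{\mi\pi}{2}(2s+\alpha'+\beta'))$. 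Each $M_j$ is entire and factors as $t^{-2s}e^{\mp\mi\pi s}$ times a product of two single-variable exponentials $e^{-\alpha(\log t\pm\mi\pi/2)}$, so by the product rule $\partial_\alpha^v\partial_\beta^bM_1|_{0}=(-1)^{v+b}(\log t+\mi\pi/2)^v(\log t+\mi\pi/2)^b\,t^{-2s}e^{-\mi\pi s}$ and $\partial_{\beta'}^v\partial_{\alpha'}^bM_2|_{0}=(-1)^{v+b}(\log t-\mi\pi/2)^v(\log t-\mi\pi/2)^b\,t^{-2s}e^{\mi\pi s}$. Expanding each factor $(\log t\pm\mi\pi/2)^v$ and $(\log t\pm\mi\pi/2)^b$ by the binomial theorem (indexing the two expansions by $r_1'$ and $r_2'$ respectively), multiplying by the outer sign $(-1)^{b+v}$, and summing the two contributions reproduces exactly the displayed main term with its double sum over $r_1',r_2'$; one checks the resulting coefficients collapse to the asserted ones by the identity $(-1)^{b+v}(-1)^{r_1'+r_2'}(-1)^{v+b-r_1'-r_2'}=1$.

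It remains to estimate the remainders. Set $E_j:=F_j-M_j$; each $E_j$ is holomorphic on $|\alpha|,|\beta|\le\delta$, and on the torus $|\alpha|=|\beta|=\delta$ one has $|E_1|\ll|M_1|\cdot\tfrac{1+2|s|^2}{t}$ with $|M_1|\le t^{-2\Re(s)+2\delta}\exp(\pi|\Im(s)|+\pi\delta)$ there, and likewise for $E_2$. Cauchy's formula for mixed partials then gives
\[
\Bigl|\partial_\alpha^v\partial_\beta^bE_1\big|_{0}\Bigr|=\Bigl|\frac{v!\,b!}{(2\pi\mi)^2}\oint_{|\alpha|=\delta}\oint_{|\beta|=\delta}\frac{E_1(\alpha,\beta)}{\alpha^{v+1}\beta^{b+1}}\,d\alpha\,d\beta\Bigr|\ll v!\,b!\,\delta^{-v-b}\sup_{|\alpha|=|\beta|=\delta}|E_1|,
\]
and since $v,b\le 3$ the factor $v!\,b!$ is absorbed; combining this (and the identical bound for $E_2$) with the main term proves part (i). For part (ii), with $|\Im(s)|>t+1$, the hypothesis $|\Im(s_1)|\ge t+1$ of Lemma \ref{Stirling}(ii) holds with $s_1=s$, so that lemma gives $|F_1|,|F_2|\ll\tfrac{\Im(s)^2}{t^2}e^{\pi|\Im(s)|}$ uniformly for $|\alpha|,|\beta|\le\delta$; applying the Cauchy estimate above directly to $F_1$ and $F_2$ yields $\mathcal{G}_{s,t}^{(v,b)}(1,1)\ll\delta^{-b-v}\tfrac{\Im(s)^2}{t^2}e^{\pi|\Im(s)|}$.

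The only genuine obstacle is the differentiation of the $O$-term supplied by Lemma \ref{Stirling}: one cannot differentiate an error estimate termwise, so one must instead recognize the remainder $E_j$ as a bona fide holomorphic function of the shift variables on a disk of radius $\delta$ and invoke the Cauchy integral formula — this is precisely what forces the $\delta^{-b-v}$ loss. The remaining points (uniformity of the Stirling asymptotics and of its hypotheses over the polydisk, which is where $0<\delta<\e_3<0.15$ is used, and the sign bookkeeping when $(\alpha,\beta)$ and $(\alpha',\beta')$ enter the two Gamma quotients in reversed differentiation orders) are routine.
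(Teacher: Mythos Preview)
Your proposal is correct and follows essentially the same approach as the paper: both start from \eqref{changeofvar}, apply Lemma \ref{Stirling}(i) to obtain the asymptotic $M_j$ for each Gamma quotient, differentiate $M_j$ exactly, and then control the holomorphic remainder $F_j-M_j$ (resp.\ $F_j$ itself in part (ii)) by Cauchy's integral formula on the polydisk of radius $\delta$, which is precisely what produces the $\delta^{-b-v}$ factor. Your compact computation of $\partial_\alpha^v\partial_\beta^b M_j$ via the exponential factorization is a mild streamlining of the paper's iterated Leibniz-rule calculation, but the argument is otherwise identical.
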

\begin{proof}
Recall \eqref{changeofvar}. It follows from Lemma \ref{Stirling}(i) that for $|\Im(s)| \leq t+1$,
\begin{align}
\frac{\Gamma(\frac{1}{2}-\be-s+\mi t ) }{ \Gamma(\frac{1}{2}+\al +s+\mi t )  }  = 
t^{-(2s+\al+\be)}\exp\left(- \mi\frac{\pi}{2}(2s+\al+\be)\right) \left(1+O\left(\frac{1+2|s|^2}{t}\right)\right).
\label{minus-t}
\end{align}
Observe that the $v$-th derivative, with respect to $\al$, of the main term above is 
\begin{align*}
 & \frac{\partial^{v}}{\partial \al^v}  \left( t^{-(2s+\al+\be)}\exp\left(- \mi\frac{\pi}{2}(2s+\al+\be)\right) \right)\\
&= \sum_{r_1' = 0}^{v}  (-1)^{r_1'} {v \choose r_1'} (\log t)^{r_1'}  t^{-(2s+\al+\be)} 
 \exp\left(- \mi\frac{\pi}{2}(2s+\al+\be)\right)  \left( - \mi\frac{\pi}{2}\right)^{v-r_1'}.
\end{align*}
By a direct computation, we see that the $b$-th derivative, with respect to $\be$, of the above expression is 
\begin{align}\label{minus t main}
 \begin{split}
 & \frac{\partial^{b}}{\partial \be^b} \frac{\partial^{v}}{\partial \al^v}  \left( t^{-(2s+\al+\be)}\exp\left(- \mi\frac{\pi}{2}(2s+\al+\be)\right) \right) \\
 &= \sum_{r_2'= 0}^b \sum_{r_1' = 0}^{v}  (-1)^{r_1' + r_2'}  {v \choose r_1'} {b \choose r_2'}(\log t)^{r_1'+r_2'}   t^{-(2s+\al+\be)}  \left( - \mi\frac{\pi}{2}\right)^{v+b-r_1'-r_2'}
\exp\left(- \mi\frac{\pi}{2}(2s+\al+\be)\right). 
  \end{split}
\end{align}

Now, we handle the error term in \eqref{minus-t}. Note that
\[
t^{-(2s+\al+\be)}    \exp\left(- \mi\frac{\pi}{2}(2s+\al+\be)\right)  O\left(\frac{1+2|s|^2}{t}\right)
\]
is holomorphic  in the  region $|\alpha|, |\beta|\leq \delta$. By the Cauchy integral formula, we have
\begin{align*}
&\left. \frac{\partial^{b}}{\partial \be^b} \frac{\partial^{v}}{\partial \al^v}  \left( t^{-(2s+\al+\be)}    \exp\left(- \mi\frac{\pi}{2}(2s+\al+\be)\right)  O\left(\frac{1+2|s|^2}{t}\right) \right)\right|_{\al = \be = 0}\\
& = \frac{b!v!}{(2\pi \mi)^2}  \int_{C(0,\delta)} \int_{C(0,\delta)} 
\left( t^{-(2s+\al+\be)}    \exp\left(- \mi\frac{\pi}{2}(2s+\al+\be)\right)  O\left(\frac{1+2|s|^2}{t}\right) \right)  \frac{1}{\be^{b+1} \al^{v+1}} \, d\al \,  d\be\\
&\ll \delta^{-b-v}  t^{-2 \Re (s)+2\delta}    \exp\left( \pi \Im(s) +\pi \delta\right)  \left(\frac{1+2|s|^2}{t}\right) ,
\end{align*}
where $C(0,\delta)$ is the circle centred at $0$ with radius $\delta$. This estimate, together with \eqref{minus t main}, gives 
\begin{align}
 \begin{split}
& \left. \frac{\partial^{b}}{\partial \be^b} \frac{\partial^{v}}{\partial \al^v} 
\frac{\Gamma(\frac{1}{2}-\be-s+\mi t ) }{ \Gamma(\frac{1}{2}+\al +s+\mi t )  } \right|_{\al = \be = 0}\\
 &=\sum_{r_2'= 0}^b \sum_{r_1' = 0}^{v}  (-1)^{r_1' + r_2'} {v \choose r_1'} {b \choose r_2'} (\log t)^{r_1'+r_2'}  t^{-2s}  \left( - \mi\frac{\pi}{2}\right)^{v+b-r_1'-r_2'} 
\exp\left(- \mi\pi s\right)   \\
& + 
O \left( \delta^{-b-v}  t^{-2 \Re(s)+2\delta}    \exp\left( \pi \Im(s) +\pi\delta\right)  \left(\frac{1+2|s|^2}{t}\right)  \right).
\label{equ-minus-1}
 \end{split}
\end{align}

Similarly,  by Lemma \ref{Stirling}(i), we have
\begin{equation}
 \label{Stirling2}
\frac{\Gamma(\frac{1}{2}-\be'-s-\mi t)}{\Gamma(\frac{1}{2}+\al' +s-\mi t) }
  = 
t^{-(2s+\al'+\be')}\exp\left( \mi\frac{\pi}{2}(2s+\al'+\be')\right) \left(1+O\left(\frac{1+2|s|^2}{t}\right)\right).
\end{equation}
It can be derived that
\begin{align*}
 & \frac{\partial^{b}}{\partial \al'^b}  \frac{\partial^{v}}{\partial \be'^v}  \left( t^{-(2s+\al'+\be')}\exp\left( \mi\frac{\pi}{2}(2s+\al'+\be')\right) \right)\\
&=\sum_{r_2' = 0}^{b}  \sum_{r_1' = 0}^{v}  (-1)^{r_1'+r_2'} (\log t)^{r_1' + r_2'}  t^{-(2s+\al'+\be')} \left(  \mi\frac{\pi}{2}\right)^{v+b-r_1'-r_2'}
 \exp\left( \mi\frac{\pi}{2}(2s+\al'+\be')\right)  .
\end{align*}
Again, applying the Cauchy integral formula  gives 
\begin{align*}
&\left. \frac{\partial^{b}}{\partial \al'^b}  \frac{\partial^{v}}{\partial \be'^v}   \left( t^{-(2s+\al'+\be')}    \exp\left( \mi\frac{\pi}{2}(2s+\al'+\be')\right)  O\left(\frac{1+2|s|^2}{t}\right) \right) \right|_{\al' =\be' = 0}\\
&\ll \delta^{-b-v}  t^{-2 \Re (s)+2\delta}    \exp\left( -\pi \Im(s) +\pi \delta\right)  \left(\frac{1+2|s|^2}{t}\right) ,
\end{align*}
and thus
\begin{align}
 \begin{split}
&\left. \frac{\partial^{b}}{\partial \al'^b}  \frac{\partial^{v}}{\partial \be'^v}     \left( \frac{\Gamma(\frac{1}{2}-\be'-s-\mi t)}{\Gamma(\frac{1}{2}+\al' +s-\mi t) }
   \right)  \right|_{\al' =\be' = 0} \\
    &=\sum_{r_2'= 0}^b \sum_{r_1' = 0}^{v}  (-1)^{r_1' + r_2'}{v \choose r_1'} {b \choose r_2'} (\log t)^{r_1'+r_2'}  t^{-2s}  \left(  \mi\frac{\pi}{2}\right)^{v+b-r_1'-r_2'} 
\exp\left( \mi\pi s\right) \\
& + 
O \left( \delta^{-b-v}  t^{-2 \Re(s)+2\delta}    \exp\left( -\pi \Im(s) +\pi\delta\right)  \left(\frac{1+2|s|^2}{t}\right)  \right).
\label{equ-minus-2}
  \end{split}
\end{align}
Finally, combining  \eqref{equ-minus-1} and \eqref{equ-minus-2}, we complete the proof of the first part of the Lemma.

Now we prove the second part. By the Cauchy integral formula, it follows that for  $|\Im(s)|> t+1$, 
\begin{align*}
\left. \frac{\partial^{b}}{\partial \be^b} \frac{\partial^{v}}{\partial \al^v}  \left( \frac{\Gamma(\frac{1}{2}-\be-s+\mi t ) }{ \Gamma(\frac{1}{2}+\al +s+\mi t )  }  \right)\right|_{\al = \be = 0}
 &= \frac{b!v!}{(2\pi \mi)^2}  \int_{C(0,\delta)} \int_{C(0,\delta)} 
 \frac{\Gamma(\frac{1}{2}-\be-s+\mi t ) }{ \Gamma(\frac{1}{2}+\al +s+\mi t )  }   \frac{1}{\be^{b+1} \al^{v+1}} \, d\al \,  d\be.
\end{align*}
Note that by Lemma \ref{Stirling},
\begin{align*}
\left| \frac{\Gamma(\frac12-\be-s+\mi t )}{\Gamma(\frac12+ \al+s +\mi t )} \right| \ll \frac{\Im(s)^2}{t^2}e^{\pi|\Im(s)|}.
\end{align*}
Therefore, 
\begin{align*}
\left. \frac{\partial^{b}}{\partial \be^b} \frac{\partial^{v}}{\partial \al^v}  \left( \frac{\Gamma(\frac{1}{2}-\be-s+\mi t ) }{ \Gamma(\frac{1}{2}+\al +s+\mi t )  }  \right)\right|_{\al = \be = 0}
 &\ll \delta^{-b-v}  \frac{\Im(s)^2}{t^2}e^{\pi|\Im(s)|}.
 \end{align*}
 In a similar way, we can derive 
 \[
 \left. \frac{\partial^{b}}{\partial \al'^b}  \frac{\partial^{v}}{\partial \be'^v}     \left( \frac{\Gamma(\frac{1}{2}-\be'-s-\mi t)}{\Gamma(\frac{1}{2}+\al' +s-\mi t) }
   \right)  \right|_{\al' =\be' = 0}\ll  \delta^{-b-v} \frac{\Im(s)^2}{t^2}e^{\pi|\Im(s)|} .
 \]
This completes the proof.
\end{proof}

%
%

\begin{lem}
\label{HHHFinal}
Let $\HH_s^{(u,a)}(1,1)$ be given as in \eqref{Hua}. Assume $\Re(s) = \e_3$. We have
\begin{align*}
&\HH_s^{(u,a)}(1,1)\\
& = {\zeta (1+2s) ^{2}}\sum_{i_1+i_2=u} \binom{u}{i_1}   \sum_{j_1+j_2=a} \binom{a}{j_1}  
     \sum_{x_0 =0}^{i_2 + j_2} \sum_{\substack{x_1 + x_2 + x_3 =i_1 \\x_1,x_2,x_3 \geq 0}}   \sum_{\substack{y_1 + y_2 + y_3 =j_1 \\y_1,y_2,y_3 \geq 0}}
      {i_2 +j_2 \choose x_0} (-1)^{x_0+x_1 + y_1}\\
     &  \times \zeta^{(x_0)} (2s )     
      { i_1 \choose x_1,x_2,x_3} { j_1 \choose y_1,y_2,y_3}   g_{x_2} (s)  g_{y_2}(s)   \zeta^{(x_1+y_1)} ( 1+ 2s)
 \mathscr{I}^{(x_3,y_3,i_2+j_2-x_0,0)}(0,0, 0,s),
\end{align*}
where 
$\mathscr{I}(0,0, 0,s)$ is defined in \eqref{def-scrI}, and 
\begin{equation}\label{def-gk}
g_k (s) := \zeta (1+ 2s)^7 \left. \frac{\partial^k}{\partial {z^k}}  \left( \frac{1}{\zeta(1 + 2s - z)^4} \right) \right|_{z=0}.
\end{equation}
Note that
 \begin{equation}\label{equ:g-function-0123}
 \begin{split}
g_0 (s)  &= {\zeta (1+ 2s)^3} ,\\
g_1 (s) & ={4 \zeta'(1+2s)\zeta(1+2s)^2},\\
g_2(s) & ={20 \zeta'(1+2s)^2 \zeta(1+2s)} - {4\zeta^{(2)}(1+2s)\zeta(1+2s)^2},\\
g_3(s) &= {120 \zeta'(1+2s)^3 } -{60 \zeta'(1+2s) \zeta^{(2)}(1+2s)\zeta(1+2s)} + {4\zeta^{(3)}(1+2s)\zeta(1+2s)^2}.
\end{split}
\end{equation}
The $g_j(s)$ are holomorphic everywhere with the exception of poles at $s=0$.
\end{lem}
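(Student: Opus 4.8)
The plan is to differentiate the analytic continuation furnished by Proposition \ref{Hlemma}(i) and then expand by the generalized product rule, keeping careful track of the combinatorial data. First I would recall the identity \eqref{Hua}, namely that for $\Re(s)$ large
\[
\HH_s^{(u,a)}(1,1)=\sum_{i_1+i_2=u}\binom{u}{i_1}\sum_{j_1+j_2=a}\binom{a}{j_1}\,\mathscr{H}^{(i_1,j_1,i_2+j_2,0)}(0,0,0,s),
\]
and observe (as already noted in the excerpt following \eqref{Hua}) that both sides continue holomorphically to a neighbourhood of the line $\Re(s)=\e_3$: the left side by Proposition \ref{Hlemma}, whose poles \eqref{Hpoles}, after setting $u_i=0$, lie at $\Re(s)\in\{0,\tfrac12,-\tfrac14\}$, none of which meets a small strip about $\Re(s)=\e_3$ when $0<\e_3<0.15$; the right side because (see below) every factor occurring there is holomorphic near that line. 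By uniqueness of analytic continuation it therefore suffices to compute $\mathscr{H}^{(i_1,j_1,i_2+j_2,0)}(0,0,0,s)$ from the formula of Proposition \ref{Hlemma}(i) and substitute.

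The key algebraic step is to split the sixteen copies of $\zeta(1+2s)$ as $\zeta(1+2s)^{16}=\zeta(1+2s)^2\cdot\zeta(1+2s)^7\cdot\zeta(1+2s)^7$, so that Proposition \ref{Hlemma}(i) reads
\[
\mathscr{H}(u_1,u_2,u_3,s)=\zeta(1+2s)^2\,\zeta(2s-u_3)\,\frac{\zeta(1+2s)^7}{\zeta(1+2s-u_1)^4}\,\frac{\zeta(1+2s)^7}{\zeta(1+2s-u_2)^4}\,\zeta(1+2s-u_1-u_2)\,\mathscr{I}(u_1,u_2,u_3,s),
\]
the first factor being independent of $u_1,u_2,u_3$. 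I would then apply the generalized product rule to $\partial_{u_1}^{i_1}\partial_{u_2}^{j_1}\partial_{u_3}^{i_2+j_2}$: the $u_3$-derivatives act only on $\zeta(2s-u_3)$ and on $\mathscr{I}$, split by $\binom{i_2+j_2}{x_0}$ with $x_0$ of them falling on $\zeta(2s-u_3)$; the $u_1$-derivatives act on $\zeta(1+2s)^7/\zeta(1+2s-u_1)^4$, on $\zeta(1+2s-u_1-u_2)$, and on $\mathscr{I}$, split by $\binom{i_1}{x_1,x_2,x_3}$; and symmetrically for $u_2$ by $\binom{j_1}{y_1,y_2,y_3}$. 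Evaluating each piece at $u_1=u_2=u_3=0$ gives $\partial_{u_3}^{x_0}\zeta(2s-u_3)|_{u_3=0}=(-1)^{x_0}\zeta^{(x_0)}(2s)$; $\partial_{u_1}^{x_2}\big(\zeta(1+2s)^7\zeta(1+2s-u_1)^{-4}\big)|_{u_1=0}=g_{x_2}(s)$ and likewise $g_{y_2}(s)$, by the very definition \eqref{def-gk}; $\partial_{u_1}^{x_1}\partial_{u_2}^{y_1}\zeta(1+2s-u_1-u_2)|_{u_1=u_2=0}=(-1)^{x_1+y_1}\zeta^{(x_1+y_1)}(1+2s)$; and $\partial_{u_1}^{x_3}\partial_{u_2}^{y_3}\partial_{u_3}^{i_2+j_2-x_0}\mathscr{I}|_{0}=\mathscr{I}^{(x_3,y_3,i_2+j_2-x_0,0)}(0,0,0,s)$, together with the leftover constant factor $\zeta(1+2s)^2$. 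Gathering the signs $(-1)^{x_0}$ and $(-1)^{x_1+y_1}$ into $(-1)^{x_0+x_1+y_1}$ and inserting back into the displayed identity for $\HH_s^{(u,a)}(1,1)$ produces precisely the stated formula.

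Finally I would verify \eqref{equ:g-function-0123}: the four expressions follow from a routine computation of $\partial_z^k\zeta(1+2s-z)^{-4}|_{z=0}$ by the quotient rule (equivalently Fa\`a di Bruno), for $k=0,1,2,3$. For the holomorphy claim, $\zeta(w)$ is holomorphic on $\mathbb{C}\setminus\{1\}$ with a simple pole at $w=1$, so $\zeta(1+2s)$ is holomorphic except for a simple pole at $s=0$; since in each of $g_0,g_1,g_2,g_3$ the function $\zeta(1+2s)$ appears only to a nonnegative power, each $g_j$ is holomorphic except at $s=0$. Likewise $\zeta^{(x_0)}(2s)$ and $\zeta^{(x_1+y_1)}(1+2s)$ are holomorphic near $\Re(s)=\e_3$, and $\mathscr{I}^{(x_3,y_3,i_2+j_2-x_0,0)}(0,0,0,s)$ is holomorphic for $\Re(s)>-\tfrac14+\delta'$ by Proposition \ref{Hlemma}(ii); this justifies the continuation step used in the first paragraph. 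The main obstacle is purely bookkeeping: arranging the threefold multinomial splitting, the sum over $x_0$, and the signs so that every term lands exactly where the statement predicts, and confirming that \eqref{def-gk} is indeed the $k$-th $u_1$-derivative of $\zeta(1+2s)^7\zeta(1+2s-u_1)^{-4}$ at $u_1=0$; there is no analytic subtlety beyond the continuation remark, which is already supplied by Proposition \ref{Hlemma}.
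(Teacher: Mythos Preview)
Your proposal is correct and follows essentially the same approach as the paper: both use Proposition \ref{Hlemma}(i) as the starting factorisation, differentiate via the generalized product rule, evaluate at $u_1=u_2=u_3=0$, and then invoke uniqueness of analytic continuation to pass from large $\Re(s)$ to $\Re(s)=\e_3$. Your pre-splitting $\zeta(1+2s)^{16}=\zeta(1+2s)^2\cdot\zeta(1+2s)^7\cdot\zeta(1+2s)^7$ is a slightly cleaner way to see the emergence of the $g_k$ factors than the paper's route (which keeps $\zeta^{16}$ intact and only repackages it into $\zeta^2 g_{x_2}g_{y_2}$ at the final step), but the computations are identical.
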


\begin{proof}
Recall the relation between $\HH_s^{(u,a)}(1,1)$ and $\mathscr{H}(u_1,u_2,u_3,s)$ from \eqref{Hua} and the remark below it.  We only need to handle  $\mathscr{H}(u_1,u_2,u_3,s)$. By Proposition \ref{Hlemma}, for $\Re(s) = \e_3$, it follows that 
\begin{align*}
\frac{\partial^{i_2 +j_2}}{\partial {u_3}^{i_2 +j_2}} \mathscr{H}(u_1,u_2,u_3,s) 
&=  \frac{\zeta (1+2s) ^{16}\zeta ( 1+ 2s - u_1 -u_2)}{\zeta(1 + 2s - u_1)^4 \zeta(1 + 2s - u_2)^4}   
     \sum_{x_0 =0}^{i_2 + j_2} {i_2 +j_2 \choose x_0} (-1)^{x_0} \zeta^{(x_0)} (2s - u_3)\\
     &\times  \mathscr{I}^{(0,0,i_2 +j_2-x_0,0)}(u_1,u_2,u_3,s).
\end{align*}
By a direct calculation, we see that $\frac{\partial^{i_1}}{\partial {u_1}^{i_1}}   \frac{\partial^{i_2 +j_2}}{\partial {u_3}^{i_2 +j_2}}   \mathscr{H}(u_1,u_2,u_3,s)$ equals
\begin{align*}
 &   \frac{\zeta (1+2s) ^{16}}{ \zeta(1 + 2s - u_2)^4}   
     \sum_{x_0 =0}^{i_2 + j_2} {i_2 +j_2 \choose x_0} (-1)^{x_0} \zeta^{(x_0)} (2s-u_3 )    \sum_{\substack{x_1 + x_2 + x_3 =i_1 \\x_1,x_2,x_3 \geq 0}} { i_1 \choose x_1,x_2,x_3}\\
     &\times  (-1)^{x_1}\zeta^{(x_1)} ( 1+ 2s - u_1 -u_2) \\ 
     &  \times\frac{\partial^{x_2}}{\partial {u_1}^{x_2}} \left( \frac{1}{\zeta(1 + 2s - u_1)^4} \right) 
 \mathscr{I}^{(x_3,0,i_2+j_2-x_0,0)}(u_1,u_2, u_3 ,s),
\end{align*}
and thus  $\frac{\partial^{j_1}}{\partial {u_2}^{j_1}} \frac{\partial^{i_1}}{\partial {u_1}^{i_1}}   \frac{\partial^{i_2 +j_2}}{\partial {u_3}^{i_2 +j_2}}   \mathscr{H}(u_1,u_2,u_3,s)$ is equal to
\begin{align*}
&{\zeta (1+2s) ^{16}}
     \sum_{x_0 =0}^{i_2 + j_2} {i_2 +j_2 \choose x_0} (-1)^{x_0} \zeta^{(x_0)} (2s -u_3) \sum_{\substack{x_1 + x_2 + x_3 =i_1 \\x_1,x_2,x_3 \geq 0}} { i_1 \choose x_1,x_2,x_3} (-1)^{x_1}  \\
     &\times\frac{\partial^{x_2}}{\partial {u_1}^{x_2}} \left( \frac{1}{\zeta(1 + 2s - u_1)^4} \right) 
     \\
 &\times  
 \sum_{\substack{y_1 + y_2 + y_3 =j_1 \\y_1,y_2,y_3 \geq 0}} { j_1 \choose y_1,y_2,y_3} (-1)^{y_1} 
 \zeta^{(x_1+y_1)} ( 1+ 2s - u_1 -u_2) \frac{\partial^{y_2}}{\partial {u_2}^{y_2}} \left( \frac{1}{\zeta(1 + 2s - u_2)^4} \right) \\
&\times \mathscr{I}^{(x_3,y_3,i_2+j_2-x_0,0)}(u_1,u_2, u_3 ,s).
\end{align*}
Taking $u_1=u_2=u_3 =0$, we obtain
\begin{align*}
 \begin{split}
&  \left. \frac{\partial^{j_1}}{\partial {u_2}^{j_1}} \frac{\partial^{i_1}}{\partial {u_1}^{i_1}}   \frac{\partial^{i_2 +j_2}}{\partial {u_3}^{i_2 +j_2}} \mathscr{H}(u_1,u_2,u_3,s) \right |_{u_1=u_2=u_3=0}  \\
&={\zeta (1+2s) ^{2}}
     \sum_{x_0 =0}^{i_2 + j_2} \sum_{\substack{x_1 + x_2 + x_3 =i_1 \\x_1,x_2,x_3 \geq 0}}   \sum_{\substack{y_1 + y_2 + y_3 =j_1 \\y_1,y_2,y_3 \geq 0}}
      {i_2 +j_2\choose x_0} (-1)^{x_0+x_1 + y_1}\zeta^{(x_0)} (2s )     
      { i_1 \choose x_1,x_2,x_3} {j_1 \choose y_1,y_2,y_3}   \\
 & \times g_{x_2} (s)  g_{y_2}(s)   \zeta^{(x_1+y_1)} ( 1+ 2s)
 \mathscr{I}^{(x_3,y_3,i_2+j_2-x_0,0)}(0,0, 0,s),
 \end{split}
\end{align*}
where  $g_k (s)$ is defined as in  \eqref{def-gk}. Together with \eqref{Hua}, this completes the proof.
\end{proof}

\section{Appendix 1: Additive divisor conjecture for $\tau_k$ and $\tau_{\ell}$} \label{ADC}

This appendix is based on the ideas of \cite{DFI} and follows closely their notation and presentation. 
Let 
$f(x,y)$ be a smooth function compactly supported on $[X,2X] \times [Y,2Y]$, and let $\phi$ 
be an even smooth compactly supported function $\phi$
which satisfies $\phi(0)=1$.  Note that
\[
   D_{f;k,\ell} (r) = \sum_{m-n=r} \tau_k (m)  \tau_\ell (n)  f(m,n) \phi(m-n-r).
\]
We set $F(x,y)=f(x,y)\phi(x-y-r)$, and  we define $\delta(u)=1$ if $u=0$ and $\delta(u)=0$ otherwise. 
We have 
\[
  D_{f;k,\ell} (r) = \sum_{m,n \ge 1}  \tau_k (m)  \tau_\ell (n) F(m,n) \delta(m-n-r).
\]
Since $  \delta(n) = \sum_{q=1}^{\infty} \chiqs e ( \tfrac{dn}{q}) 
\Delta_q(n),$
 it follows that
\[
    D_{f;k,\ell} (r)  = \sum_{m,n \ge 1}  \tau_k (m)  \tau_\ell (n) F(m,n)
    \sum_{q=1}^{\infty} \ \chiq e ( \tfrac{d}{q}(m-n-r) ) 
\Delta_q(m-n-r).
\]
Set $E(x,y)=F(x,y)\Delta_q(x-y-r)$ and thus 
\begin{align}
    D_{f;k,\ell} (r) =   \sum_{q=1}^{\infty} \ \chiq e ( \tfrac{-dr}{q}) 
  \sum_{m \ge 1}   \tau_k (m)  e ( \tfrac{md}{q})
  \sum_{n \ge 1}     \tau_\ell (n)   e( \tfrac{-nd}{q}) E(m,n).
  \label{equ:1}
\end{align}
Recall the Mellin transform of the smooth function $E(x,y)$ is 
\[
\tilde{E} (z_1,z_2) := \int_0^{\infty}\int_0^{\infty} E(x, y) x^{z_1 -1} y^{z_2 -1} \,dx \,dy.
\]
By the inverse Mellin transform, we have 
\[
E(x,y) =\frac{1}{(2\pi \mi)^2} \int_{(c_1)} \int_{(c_2)} \tilde{E} (z_1,z_2)  x^{-z_1} y^{-z_2}  \,dz_2 \,d z_1,
\]
where $c_1,c_2 >0$.
Inserting this into \eqref{equ:1} gives 
\begin{align*}
    D_{f;k,\ell} (r)&=   \sum_{q=1}^{\infty} \ \chiq e ( \tfrac{-dr}{q}) \frac{1}{(2\pi \mi)^2} \int_{(c_1)} \int_{(c_2)} \tilde{E} (z_1,z_2)
  \sum_{m \ge 1}  \frac{ \tau_k (m)  e ( \tfrac{md}{q})}{m^{z_1}}
  \sum_{n \ge 1}    \frac{ \tau_\ell (n)   e ( \tfrac{-nd}{q})}{n^{z_2}} \,dz_2 \,d z_1.
\end{align*}
Define $
\mathcal{D}_k (s,\frac{d}{q} ):= \sum_{n=1}^{\infty}  \tau_k (n)  e ( \tfrac{nd}{q} )  n^{-s}$.
Then we have
\begin{align*}
   D_{f;k,\ell} (r) &=   \sum_{q=1}^{\infty} \ \chiq e ( \tfrac{-dr}{q}) \frac{1}{(2\pi \mi)^2} \int_{(c_1)} \int_{(c_2)} \tilde{E} (z_1,z_2)
\mathcal{D}_k(z_1,\tfrac{d}{q}) \mathcal{D}
_\ell (z_2,\tfrac{-d}{q}) \,dz_2 \,d z_1.
\end{align*}
Note that (see \cite{CG}) since $
\mathcal{D}_k  (s, \frac{a}{q} ) \sim q^{-s} \zeta^k (s) G_k (s,q)$, we expect that
\begin{align*}
  & D_{f;k,\ell} (r) \\
  & \sim   \sum_{q=1}^{\infty} \ \chiq e ( \tfrac{-dr}{q}) \frac{1}{(2\pi \mi)^2} \int_{(c_1)} \int_{(c_2)} \tilde{E} (z_1,z_2)
q^{-z_1} \zeta^k (z_1) G_k (z_1,q)   q^{-z_2} \zeta^\ell (z_2) G_{\ell} (z_2,q)   \,dz_2 \,d z_1.
\end{align*}
We next  simplify $\tilde{E} (z_1,z_2)$. Note that 
$$
\int_\mathbb{R} E(x, y)  y^{z_2 -1}  \,dy 
= \int_\mathbb{R} F(x,y) \Delta_q (x-y-r)  y^{z_2 -1}  dy =\int_\mathbb{R} F(x,x-u-r) \Delta_q (u)  (x-u-r)^{z_2 -1}  \,du 
$$
and thus
$$
\int_\mathbb{R} E(x, y)  y^{z_2 -1}  \,dy  \sim  F(x,x-r) (x-r)^{z_2-1}
=   f(x,x-r) (x-r)^{z_2-1} 
$$
since the behaviour of $\Delta_q(u)$ is similar to the Dirac delta function. 
Hence, we have
\[
\tilde{E} (z_1,z_2) \sim \int_0^{\infty} f(x,x-r) x^{z_1 -1 } (x-r)^{z_2 -1} dx.
\]
From the definition of the Ramanujan sum, it follows that $D_{f;k,\ell} (r)$ is equal to
\begin{align*}
      \frac{1}{(2\pi \mi)^2} \int_{(c_1)} \int_{(c_2)}  \zeta^k (z_1)\zeta^\ell (z_2) 
\sum_{q=1}^{\infty}  \frac{c_q (r) G_k (z_1,q)   G_\ell (z_2,q)}{ q^{z_1+z_2}}    \int_0^{\infty} f(x,x-r) x^{z_1 -1 } (x-r)^{z_2 -1} \,dx  \,dz_2 \,d z_1.
\end{align*}
Since $\zeta^k(z_1)$ and $\zeta^k(z_2)$ have poles at $z_1=1$ and $z_2=1$, respectively, we expect that 
$D_{f;k,\ell} (r)$ is asymptotic to
 \begin{align*}
   \frac{1}{(2\pi \mi)^2} \int_{\mathcal{B}_1} \int_{\mathcal{B}_2}  \zeta^k (z_1)\zeta^\ell (z_2) 
\sum_{q=1}^{\infty}  \frac{c_q (r) G_k (z_1,q)   G_\ell (z_2,q)}{ q^{z_1+z_2}}    \int_0^{\infty} f(x,x-r) x^{z_1 -1 } (x-r)^{z_2 -1} \,dx  \,dz_2 \,d z_1,
\end{align*}
where $\mathcal{B}_j = \{ z_j \in \mathbb{C} \ | \ |z_j-1| < r_j \}$ for $j=1,2$.  We believe that this last expression is the main term in the additive  divisor conjecture for $\tau_k$ and $\tau_{\ell}$.

\begin{conj}[$k$-$\ell$ additive divisor conjecture] \label{kldivconj}
There exists $C >0$ for which 
the following  holds.  Let $\varepsilon_0$ and $\varepsilon'$ be arbitrarily small positive constants.  Let $P > 1$, and let $X,Y > \frac{1}{2}$ satisfy $Y \asymp X$.  Let $f$
be a smooth function satisfying \eqref{fsupport} and \eqref{fcond}.
Then, in those cases where $X$ is sufficiently large (in absolute terms), one has
  \begin{align*}
      D_{f;k,\ell}(r)
    &    =     \frac{1}{(2\pi \mi)^2} \int_{\mathcal{B}_1} \int_{\mathcal{B}_2}  \zeta^k (z_1)\zeta^{\ell} (z_2)  
\sum_{q=1}^{\infty}  \frac{c_q (r) G_k (z_1,q)   G_{\ell} (z_2,q)}{ q^{z_1+z_2}}   \\
& \times \int_0^{\infty} f(x,x-r) x^{z_1 -1 } (x-r)^{z_2 -1} \, dx  \,dz_2 \,d z_1 
 + O ( P^{C} X^{\frac{1}{2}+\varepsilon_0} ), 
  \end{align*}
 uniformly for $1 \le |r| \ll X^{1-\varepsilon'}$, where for $i=1,2$, $\mathcal{B}_i = \{ z_i \in \mathbb{C} \ | \ |z_i-1| = r_i \} \subset \mathbb{C}$ 
 are circles, centred at $1$, of radii $r_i \in (0, \frac{1}{10})$,  and $c_q(r) =  \chiqs e ( \tfrac{-dr}{q} )$ is the Ramanujan sum. 
\end{conj}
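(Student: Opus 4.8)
Strictly speaking this is a conjecture rather than a theorem, so what follows is a proposal for how one would \emph{attempt} to establish it; the method is that of Duke, Friedlander, and Iwaniec \cite{DFI}, and it succeeds unconditionally only when $k=\ell=2$. The plan is to turn each of the heuristic $\sim$-steps in the derivation above into a rigorous inequality. First I would fix a parameter $Q \asymp X^{1/2}$ and use the $\delta$-method expansion $\delta(n) = \sum_{q} \chiqs e(dn/q)\,\Delta_q(n)$, where $\Delta_q$ is the usual smooth function that is negligible for $q>Q$ once one uses the compact support of $F(x,y)=f(x,y)\phi(x-y-r)$; this reduces $D_{f;k,\ell}(r)$ to $\sum_{q\le Q} \chiqs e(-dr/q)\,\mathcal{D}_k(z_1,\tfrac{d}{q})\,\mathcal{D}_\ell(z_2,\tfrac{-d}{q})$ integrated against the Mellin transform $\tilde{E}(z_1,z_2)$, with a controlled error.

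The next step is to split each twisted divisor generating function $\mathcal{D}_k(s,d/q)$ into a main part and a dual part via the Voronoi summation formula for $\tau_k$ (equivalently, via the functional equation of the Estermann-type Dirichlet series). The main part contributes exactly $q^{-z_1}\zeta^k(z_1)G_k(z_1,q)$, and after summing over $q$ against the Ramanujan sum and replacing $\tilde{E}$ by $\int_0^\infty f(x,x-r)x^{z_1-1}(x-r)^{z_2-1}\,dx$ --- valid because $\Delta_q(u)$ approximates a Dirac mass up to an error measured by $P$ and $q$ --- one recovers precisely the conjectured main term. The dual parts generate, after executing the $d$-summation, weighted sums of (products of) hyper-Kloosterman sums against oscillatory integrals in the dual variables, and the whole problem is reduced to showing these contribute $O(P^{C}X^{1/2+\varepsilon_0})$ uniformly for $1\le|r|\ll X^{1-\varepsilon'}$.

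The hard part is precisely this last bound. When $k=\ell=2$ the dual sum is a shifted convolution of $\GL_2$ divisor/Eisenstein data, so Kuznetsov's formula, Weil's bound for Kloosterman sums, and the uniformity refinements of Aryan \cite{Ar} (together with Blomer's work \cite{Bl}, which predicts the $X^{1/2+\varepsilon_0}$ shape) deliver the result. For $k$ or $\ell\ge 3$, however, the relevant objects live on $\GL_k$, there is no Kuznetsov-type spectral identity available, and one is left only with Deligne-type pointwise bounds for the hyper-Kloosterman sums; these give square-root cancellation in the $d$-variable but \emph{not} enough cancellation once one also sums over the long dual variables, so the target error term appears to be beyond current technology. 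This is why the conjecture must be \emph{assumed} in the present work, and why only averaged or conditional forms (as in Ivi\'{c} \cite{Iv3}) are presently known.
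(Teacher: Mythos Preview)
Your proposal is correct and aligned with the paper's own treatment: the statement is a conjecture, and the paper's Appendix~1 gives only the heuristic $\delta$-method derivation of the main term (exactly the DFI route you describe), citing Aryan~\cite{Ar} and Blomer~\cite{Bl} as evidence for the shape of the error term rather than attempting a proof. Your discussion goes somewhat further than the paper by spelling out the Voronoi/dual decomposition and explaining concretely why the hyper-Kloosterman contributions for $k,\ell\ge 3$ resist current methods, but the underlying approach is the same.
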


\section{Appendix 2: Proof of Lemma \ref{fpartials}}\label{appendix2}
For $\e_1\in (0,\frac{1}{2}]$, $M \ll U^{2+\varepsilon_2}$, $N \asymp M$, $0 \ne r \ll \frac{M}{T_0} T^{\varepsilon_1}$,  and $(x,y) \in [M,2M] \times [N,2N]$, we claim
\begin{equation}
   \label{fpartialsbd}
  x^m y^n f_{r}^{(m,n)}(x,y) \ll  T^{4 \varepsilon_1} P^{n},
   \text{ where }
     P =T^{1+ \e_1 } T_0^{-1}.
\end{equation}
\begin{proof}
By \eqref{fMN}, the definition of $f_r$, we can write  $f_r(x,y) = W( \frac{x}{M} )
W ( \frac{y}{N} ) \phi(x,y)$, 
where
\[
 \phi(x,y) = 
\frac{1}{2 \pi \mi} \int_{(\varepsilon_1)} \frac{G(s)}{s} 
\left( \frac{1}{\pi^4 xy} \right)^s 
\frac{1}{T} \int_{-\infty}^{\infty} \left( 
1+\frac{r}{y} \right)^{-\mi t} g(s,t) \left( \frac{U}{t}\right)^{4s}w(t) \,dt \, ds
\]
for $x,y>0$ (and is $0$ otherwise). 
It suffices to prove that  for $x \asymp M$ and $y \asymp N$,
\begin{equation}
    \label{phipartialsbd}
      x^m y^n \phi^{(m,n)}(x,y) \ll  T^{4\e_1} P^{n}. 
\end{equation}
This is since by the generalized product rule and \eqref{phipartialsbd}, we have
\begin{equation*}
\begin{split}
  \left|f_{r}^{(m,n)}(x,y)\right| 
  & =  \left| \sum_{i_1+i_2=m} \binom{m}{i_1} W^{(i_1)}\left( \frac{x}{M} \right) M^{-i_1}
  \sum_{j_1+j_2=n}  \binom{n}{j_1} W^{(j_1)}\left( \frac{y}{N} \right) N^{-j_1} \phi^{(i_2,j_2)}(x,y) \right| \\
  & \le  \sum_{i_1+i_2=m} 2^m O_{i_1}(1) \left( \frac{x}{2} \right)^{-i_1} 
  \sum_{j_1+j_2=n} 2^n O_{j_1}(1)  \left( \frac{y}{2} \right)^{-j_1} \cdot x^{-i_2} y^{-j_2} \cdot
  O_{i_2,j_2}(T^{4\e_1} P^{j_2}) 
 \\                                                                                                                                                                                                              & =                                                                                                                                                                                                              \left(  \sum_{i_1=0}^{m} \sum_{j_1=0}^{n} O_{i_1,j_1,m,n}(1) \cdot  P^{-j_1} \right)
 T^{4\e_1}  P^{n}    x^{-m} y^{-n},
\end{split}
\end{equation*} 
where we used the fact that $W(u)=0$ for $u \ge 2$. By \eqref{cond3}, for all sufficiently large $T$, we know $P \ge 1$ and thus obtain \eqref{fpartialsbd}.

Now, we shall prove \eqref{phipartialsbd}. We first write 
\begin{equation}
  \label{phipartials}
  \begin{split}
  &\phi^{(m,n)}(x,y) \\
  &=
  \frac{1}{2 \pi \mi} \int_{(\varepsilon_1)} \frac{G(s)}{s} 
\left( \frac{1}{\pi^4 } \right)^s 
\frac{1}{T} \int_{-\infty}^{\infty} 
   \frac{\partial^m}{\partial x^m} \frac{\partial^n}{\partial y^n}  \left(x^{-s} y^{-s} \left( 
1+\frac{r}{y} \right)^{-\mi t} \right)  g(s,t)  \left(\frac{U}{t} \right)^{4s}\omega(t) \,dt \, ds. 
\end{split}
\end{equation}
As shown in \cite[pp. 56-57]{Ng}, when $\Re(s) = \varepsilon_1$, $x \asymp M$, $y \asymp N$, $t \asymp T$, $1 \le |r|  \ll \frac{M}{T_0} T^{\varepsilon_1} =o(M)$, and $P = ( \frac{T}{T_0} )T^{ \e_1} 
\ge 1$ (by \eqref{cond3}), one has
\begin{equation}
  \label{phi-inner-term}
   \frac{\partial^m}{\partial x^m} \frac{\partial^n}{\partial y^n}  \left(x^{-s} y^{-s} \left( 
1+\frac{r}{y} \right)^{-\mi t} \right)  
 \ll M^{-2 \varepsilon_1} (1+|s|)^{m+n} P^n x^{-m} y^{-n}. 
\end{equation}
Note that $\left|\frac{G(s)}{s}  \right| \le \frac{|G(s)|}{\varepsilon_1}$ for $\Re(s)=
\varepsilon_1$, and 
\[
  |g(s,t)| \ll \left( \frac{t}{2} \right)^{4 \varepsilon_1} (1 + O(|s|^2+1))
  \ll (1+|s|)^2  t^{4 \varepsilon_1}
\] 
 for $\Re(s) = \varepsilon_1$ and $ t \asymp T > 1$ (by Lemma \ref{lem:bd-g}(i)). Using \eqref{phipartials}, \eqref{phi-inner-term}, and these two bounds, combined with \eqref{cond2} and  \eqref{cond3}, we derive
$$
      x^m y^n \phi^{(m,n)}(x,y)   \ll      P^n  M^{-2 \varepsilon_1}
      \int_{(\varepsilon_1)} |G(s)|  \left( \frac{1}{T} 
      \int_{-\infty}^{\infty}  (1+|s|)^{m+n+2}  T^{4 \varepsilon_1} |\omega(t)| dt \right)   |ds| 
      \ll \left( \frac{T^4}{M^2} \right)^{\varepsilon_1} P^n 
$$
for $x \asymp M$, $y \asymp N$ and $m,n \in \mathbb{N} \cup \{ 0 \}$.
Finally, as $M \gg \frac{T_0}{T^{\varepsilon_1}} |r| \ge \frac{T_0}{T^{\varepsilon_1}} 
\ge T^{\frac{1}{2} -\e_1}$ (by \eqref{cond3}) and thus $\frac{T^4}{M^2} \ll T^{3+ 2\e_1} \le T^{4} $ whenever $\e_1\in(0, \frac{1}{2}]$, we complete the proof of Lemma \ref{fpartials}. 
\end{proof}

\end{document}